\theoremstyle{plain}
\newtheorem{thm}{\protect\theoremname}[section]
  \theoremstyle{plain}
  \newtheorem{lem}[thm]{\protect\lemmaname}
  \theoremstyle{remark}
  \newtheorem{rem}[thm]{\protect\remarkname}
  \theoremstyle{plain}
  \newtheorem{cor}[thm]{\protect\corollaryname}
  \theoremstyle{plain}
  \newtheorem{prop}[thm]{\protect\propositionname}
\def\RSsubtxt{section~}\newref{sub}{name = \RSsubtxt}}
\def\RSthmtxt{theorem~}\newref{thm}{name = \RSthmtxt}}
\def\RSlemtxt{lemma~}\newref{lem}{name = \RSlemtxt}}
\numberwithin{equation}{section}
\numberwithin{figure}{section}
\theoremstyle{plain}
\date{}
  \providecommand{\corollaryname}{Corollary}
  \providecommand{\lemmaname}{Lemma}
  \providecommand{\propositionname}{Proposition}
  \providecommand{\remarkname}{Remark}
\providecommand{\theoremname}{Theorem}
\begin{document}

\title{\textbf{\normalsize{A LOWER BOUND FOR DISCONNECTION BY SIMPLE RANDOM
WALK}}}

\maketitle
\vspace{-0.7cm}

\begin{center}
{\large{Xinyi Li}}
\par\end{center}{\large \par}

\vspace{1cm}

\begin{center}
Preliminary draft
\par\end{center}
\begin{abstract}
We consider simple random walk on $\mathbb{Z}^{d}$, $d\geq3$. Motivated
by the work of A.-S. Sznitman and the author in \cite{Li-Szn ld}
and \cite{Li-Szn lb}, we investigate the asymptotic behaviour of
the probability that a large body gets disconnected from infinity
by the set of points visited by a simple random walk. We derive asymptotic
lower bounds that bring into play random interlacements. Although
open at the moment, some of the lower bounds we obtain possibly match
the asymptotic upper bounds recently obtained in \cite{dscn2-Szn}.
This potentially yields special significance to the tilted walks that
we use in this work, and to the strategy that we employ to implement
disconnection.
\end{abstract}
\vspace{8cm}

\date{\begin{flushright}{\small December 2014}\end{flushright}}

\vfill \noindent {\small -------------------------------- \\ Departement Mathematik, ETH Z\"urich, CH-8092 Z\"urich, Switzerland.} \thispagestyle{empty} 

\pagebreak{}\newpage{}

\pagebreak{}

\newpage{}

\mbox{} \thispagestyle{empty} \newpage

\section{Introduction}

How hard is it to disconnect a macroscopic body from infinity by the
trace of a simple random walk in $\mathbb{Z}^{d}$, when $d\geq3$?
In this work we partially answer this question by deriving an asymptotic
lower bound on the probability of such a disconnection. Remarkably,
our bounds bring into play random interlacements as well as a suitable
strategy to implement disconnection. Although open at the moment,
some of the lower bounds we obtain in this work may be sharp, and
match the recent upper bounds from \cite{dscn2-Szn}.

\vspace{0.3cm}

We now describe the model and our results in a more precise fashion.
We refer to Section 1 for precise definitions. We consider the continuous-time
simple random walk on $\mathbb{Z}^{d}$, $d\geq3$. and we denote
by $P_{0}$ the (canonical) law of the walk starting from the origin.
We denote by $\mathcal{V}=\mathbb{Z}^{d}\backslash X_{[0,\infty)}$
the complement of the set of points visited by the walk.

We consider $K$, a non-empty compact subset of $\mathbb{R}^{d}$
 and for $N\geq1$ its discrete blow-up:
\begin{equation}
K_{N}=\{x\in\mathbb{Z}^{d};\: d_{\infty}(x,NK)\leq1\},\label{eq:KNdef0}
\end{equation}
where $NK$ stands for the homothetic of ratio $N$ of the set $K$,
and 
\begin{equation}
d_{\infty}(z,NK)=\inf_{y\in NB}|z-y|_{\infty}
\end{equation}
stands for the sup-norm distance of $z$ to $NK$. Of central interest
for us is the event specifying that $K_{N}$ is not connected to infinity
in $\mathcal{V}$, which we denote by
\begin{equation}
\{K_{N}\overset{\mathcal{V}}{\nleftrightarrow}\infty\}.\label{eq:ANdef}
\end{equation}

Our main result brings into play the model of random interlacements.
Informally, random interlacements in $\mathbb{Z}^{d}$ are a Poissonian
cloud of doubly-infinite nearest-neighbour paths, with a positive
parameter $u$, which is a multiplicative factor of the intensity
of the cloud (we refer to \cite{CerTeilecture} and \cite{Mousquetaires}
for further details and references). We denote by $\mathcal{I}^{u}$
the trace of random interlacements of level $u$ on $\mathbb{Z}^{d}$,
and by $\mathcal{V}^{u}=\mbox{\ensuremath{\mathbb{Z}}}^{d}\backslash\mathcal{I}^{u}$
the corresponding vacant set. It is known that there is a critical
value $u_{**}\in(0,\infty)$, which can be characterized as the infimum
of the levels $u>0$ for which the probability that the vacant cluster
at the origin reaches distance $N$ from the origin has a stretched
exponential decay in $N$, see \cite{sido10} or \cite{Mousquetaires}.

The main result of this article is the following asymptotic lower
bound.
\begin{thm}
\label{thm:mainTheorem} 
\begin{equation}
\liminf_{N\to\infty}\frac{1}{N^{d-2}}\log(P_{0}[K_{N}\overset{\mathcal{V}}{\nleftrightarrow}\infty])\geq-\frac{u_{**}}{d}\mathrm{cap}_{\mathbb{R}^{d}}(K),\label{eq:mainlb}
\end{equation}
where $\mathrm{cap}_{\mathbb{R}^{d}}(K)$ stands for the Brownian
capacity of $K$. 
\end{thm}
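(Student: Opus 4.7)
\medskip
\noindent\textbf{Proof plan.} The strategy is to implement disconnection via a tilted version of the simple random walk which spends an atypical amount of time in a thin shell around $K_{N}$, producing a trace whose local occupation profile approximates that of random interlacements at level slightly above $u_{**}$. The exponential cost of the tilt, computed via relative entropy, will be balanced against the disconnection property which is guaranteed at supercritical interlacement levels, and the equality between capacity as a Dirichlet energy and as the intrinsic Brownian capacity will produce the prefactor $\mathrm{cap}_{\mathbb{R}^{d}}(K)$.

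More concretely, fix small parameters $\epsilon,\delta>0$, and let $S=\{x\in\mathbb{R}^{d}:\, d_{\infty}(x,K)\leq\delta\}\setminus K$ be a macroscopic shell around $K$; denote by $S_{N}$ its discrete blow-up surrounding $K_{N}$. One constructs a tilted law $\widetilde{P}_{0}$ on the path space of $X$, absolutely continuous with respect to $P_{0}$, such that under $\widetilde{P}_{0}$ the walk is forced, for a time horizon $T\asymp N^{d}$, to produce a rescaled local time on $S_{N}$ approximating the constant density $(u_{**}+\epsilon)/d$ that random interlacements at level $u_{**}+\epsilon$ would produce. A natural realisation is a Doob-type transform built from a smooth non-negative profile $v$ on $\mathbb{R}^{d}$ concentrated on $S$, implemented at the microscopic level as a gradient multiplicative modification of the jump rates of $X$. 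A Dirichlet form / entropy calculation should yield
$$
H(\widetilde{P}_{0}\,\|\,P_{0}) \;\leq\; \frac{u_{**}+\epsilon}{d}\,\mathrm{cap}_{\mathbb{R}^{d}}(K)\, N^{d-2} + o(N^{d-2}),
$$
with the Brownian capacity $\mathrm{cap}_{\mathbb{R}^{d}}(K)$ emerging as the minimal Dirichlet energy over admissible profiles $v$ producing the required density on $S$ (and passed to the continuum limit).

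Next, under $\widetilde{P}_{0}$ one shows that $\{K_{N}\overset{\mathcal{V}}{\nleftrightarrow}\infty\}$ has probability tending to $1$. The plan is to decompose the sojourn of $X$ in $S_{N}$ into excursions between concentric mesoscopic subshells, couple these with a Poissonian excursion cloud of slightly smaller intensity (exploiting the local mixing of the walk to ensure near-independence of successive excursions), and thereby dominate $\mathcal{V}\cap S_{N}$ from above by $\mathcal{V}^{u_{**}+\epsilon/2}\cap S_{N}$. The very definition of $u_{**}$ then forces stretched-exponential decay of vacant crossings of $S_{N}$ for the dominating interlacement, so that $K_{N}$ is disconnected from infinity with high probability. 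Monotonicity of the disconnection event in the visited set allows one to restrict to disconnection by $X_{[0,T]}$, avoiding the need to control the walk after time $T$. One concludes via the standard change-of-measure inequality
$$
\log P_{0}[K_{N}\overset{\mathcal{V}}{\nleftrightarrow}\infty] \;\geq\; \log\widetilde{P}_{0}[K_{N}\overset{\mathcal{V}}{\nleftrightarrow}\infty] \;-\; \frac{H(\widetilde{P}_{0}\,\|\,P_{0})+\log 2}{\widetilde{P}_{0}[K_{N}\overset{\mathcal{V}}{\nleftrightarrow}\infty]},
$$
dividing by $N^{d-2}$ and letting $\epsilon,\delta\to 0$ to obtain (\ref{eq:mainlb}).

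The main obstacle is the excursion-coupling step: the non-Poissonian character of a single tilted random walk's visits to $S_{N}$ (its global geometry forces it to enter and leave the shell only a finite number of times, with strong dependence across excursions) must be reconciled with the Poissonian excursion structure of interlacements, with quantitative control on the intensity mismatch. Designing the tilt so that successive excursions through mesoscopic subshells decorrelate sufficiently fast, while simultaneously producing the prescribed local density of visits, is the delicate point; it is also precisely what yields the sharp prefactor $u_{**}/d$ in the bound, rather than a larger constant.
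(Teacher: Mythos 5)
Your overall strategy coincides with the paper's: tilt the walk by a change of measure whose entropic cost is a discrete Dirichlet form converging (after rescaling) to $\frac{u_{**}}{d}\mathrm{cap}_{\mathbb{R}^{d}}(K)$, show that the tilted walk disconnects with probability tending to $1$ by coupling its excursions with random interlacements at a level slightly above $u_{**}$, and conclude with the relative entropy inequality (\ref{eq:Entropychange}). However, as stated your construction of the tilt contains a genuine error. You take the profile $v$ to be ``concentrated on'' the thin shell $S=K^{\delta}\setminus K$. A Doob-type transform built from a function supported on a shell of width $\delta$ cannot produce the claimed entropy bound: the entropic cost per unit time is governed by $\mathcal{E}_{\mathbb{Z}^{d}}(h_{N},h_{N})/\|h_{N}\|_{2}^{2}$, and for a function that must drop from order $1$ on the shell to $0$ across a distance $\delta N$, the Dirichlet energy is of order $\delta^{-1}$ times the surface area, which does not converge to $\mathrm{cap}_{\mathbb{R}^{d}}(K)$ (it diverges as $\delta\to0$). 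The variational principle you invoke at the end of that paragraph does select the correct minimizer, but that minimizer is the equilibrium potential of (a small neighbourhood of) $K$ relative to a large ball $B_{(R)}$ --- equal to $1$ on all of $K^{2\delta}$ and harmonic, hence slowly decaying, far outside $K$ --- which is emphatically not shell-supported. This is exactly the function $\widetilde{h}$ of (\ref{eq:thproperty}); the resulting tilted walk is confined to a \emph{macroscopic} ball of radius $RN$ (not to a shell), spends expected time $u_{**}(1+\epsilon)h_{N}^{2}(x)$ at each $x$, and only the limit $R\to\infty$, $\delta\to0$ recovers $\mathrm{cap}_{\mathbb{R}^{d}}(K)$.

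The second half of your plan correctly identifies the decorrelation of excursions as the crux, but leaves it entirely open, and this is where most of the work lies. The paper's resolution is to note that up to time $T_{N}$ the tilted walk is a time-homogeneous reversible chain on $U^{N}=B_{(NR)}\cap\mathbb{Z}^{d}$ (the ``confined walk''), to prove a spectral gap lower bound of order $N^{-2}$ for it by the canonical path method despite its massively non-uniform stationary measure $\pi=f^{2}$, and then, for each point $x_{0}$ of a discrete fence $\Gamma^{N}$ around $K_{N}$, to compare the trace in a mesoscopic box $A_{1}^{x_{0}}$ of the $\lfloor(1+\epsilon/2)u_{**}\mathrm{cap}(A_{1})\rfloor$ excursions between $A_{1}^{x_{0}}$ and a larger box $A_{2}^{x_{0}}$ with a Poisson cloud of excursions and finally with $\mathcal{I}^{u_{**}(1+\epsilon/8)}$, via hitting-time estimates, the quasi-stationary distribution on $U^{N}\setminus A_{2}$, and a chain of five couplings; a union bound over $x_{0}\in\Gamma^{N}$ then gives disconnection. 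Your proposal of a single global domination of $\mathcal{V}\cap S_{N}$ by $\mathcal{V}^{u_{**}+\epsilon/2}\cap S_{N}$ is stronger than what is needed or proved; the local, box-by-box comparison combined with the stretched-exponential decay (\ref{eq:supercrit}) defining $u_{**}$ suffices and is what makes the argument tractable.
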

Actually, the proof of Theorem \ref{thm:mainTheorem} (after minor
changes) also shows that for any $M>1$,
\begin{equation}
\liminf_{N\to\infty}\frac{1}{N^{d-2}}\log(P_{0}[B_{N}\overset{\mathcal{V}}{\nleftrightarrow}S_{N}])\geq-\frac{u_{**}}{d}\mathrm{cap}_{\mathbb{R}^{d}}([-1,1]^{d}),\label{eq:mainlbvariant}
\end{equation}
where $B_{N}=\{x\in\mathbb{Z}^{d};\,|x|_{\infty}\leq N\}$ and $S_{N}=\{x\in\mathbb{Z}^{d};\,|x|_{\infty}=[MN]\}$
with $[MN]$ the integer part of $MN$, see Remark \ref{finalremark}
1). 

On the other hand, the recent article \cite{dscn2-Szn} improves on
\cite{dscn-Szn}, and shows that for any $M>1$, the following asymptotic
upper bound holds
\begin{equation}
\limsup_{N\to\infty}\frac{1}{N^{d-2}}\log(P_{0}[B_{N}\overset{\mathcal{V}}{\nleftrightarrow}S_{N}])\leq-\frac{\overline{u}}{d}\mathrm{cap}_{\mathbb{R}^{d}}([-1,1]^{d}),\label{eq:mainubnew}
\end{equation}
where $\overline{u}$ is a certain critical level introduced in \cite{dscn2-Szn},
such that $0<u<\overline{u}$ corresponds to the \textit{strongly
percolative} regime of $\mathcal{V}^{u}.$ Precisely, one knows that
$0<\overline{u}\leq u_{*}\leq u_{**}<\infty$, where $u_{*}$ stands
for the critical level for the percolation of $\mathcal{V}^{u}$ (the
positivity of $\overline{u}$, for all $d\geq3$, actually stems from
\cite{ubarsource} as explained in Section 2 of \cite{dscn2-Szn}).
It is plausible, but unproven at the moment, that actually $\overline{u}=u_{*}=u_{**}$.
If this is the case, the asymptotic lower bound (\ref{eq:mainlbvariant})
from the present article matches the asymptotic upper bound (\ref{eq:mainubnew})
from \cite{dscn2-Szn}.

In the case of (\ref{eq:mainlb}), one can also wonder whether one
actually has the following asymptotics (possibly with some regularity
assumption on $K$) 
\begin{equation}
\lim_{N\to\infty}\frac{1}{N^{d-2}}\log(P_{0}[K_{N}\overset{\mathcal{V}}{\nleftrightarrow}\infty])=-\frac{u_{*}}{d}\mathrm{cap}_{\mathbb{R}^{d}}(K).\label{eq:conjecture}
\end{equation}

Our proof of Theorem \ref{thm:mainTheorem} (and of (\ref{eq:mainlbvariant}))
relies on the change of probability method. The feature that the asymptotic
lower bounds, which we derive in this article, are potentially sharp,
yields special significance to the strategy that we employ to implement
disconnection.

Let us give some comments about the strategy and the proof. We construct
through fine-tuned Radon-Nikodym derivatives new measures $\widetilde{P}_{N}$,
corresponding to the ``tilted walks''. In essence, these walks evolve
as recurrent walks with generator $\widetilde{L}g(x)\!=\!\frac{1}{2d}\sum_{|x'-x|=1}\frac{h_{N}(x')}{h_{N}(x)}(g(x')\!\!-\!\! g(x))$,
up to a deterministic time $T_{N}$, and then as the simple random
walk afterwards, with $h_{N}(x)=h(\frac{x}{N})$, where $h$ is the
solution of (assuming that $K$ is regular) 
\begin{equation}
\begin{cases}
\Delta h=0 & {\rm \mbox{ on }}\mathbb{R}^{d}\backslash K,\\
h=1 & \mbox{ on }K\mbox{, and }h\mbox{ tends to }0\mbox{ at }\infty,
\end{cases}\label{eq:Dirichletproblem}
\end{equation}
and $T_{N}$ is chosen so that the expected time spent by the tilted
walk up to $T_{N}$ at any $x$ in $K_{N}$ is $u_{**}h_{N}^{2}(x)=u_{**}$
(by the choice of $h$). Informally, $\widetilde{P}_{N}$ achieves
this at a ``low entropic cost''. Quite remarkably, this constraint
on the \textit{time} spent at points and low entropic cost, induces
a local behaviour of the trace of the tilted walk which \textit{geometrically}
behaves as random interlacements with a slowly space-modulated parameter
$u_{**}h_{N}^{2}(x)$, at least close to $K_{N}$. This creates a
``fence'' around $K_{N}$, where the vacant set left by the tilted
walk is locally in a strongly non-percolative mode, so that
\begin{equation}
\lim_{N\to\infty}\widetilde{P}_{N}[K_{N}\overset{\mathcal{V}}{\nleftrightarrow}\infty]=1.
\end{equation}
On the other hand, we show that 
\begin{equation}
\widetilde{\lim}\frac{1}{N^{d-2}}H(\widetilde{P}_{N}|P_{0})\leq\frac{u_{**}}{d}\mathrm{cap}_{\mathbb{R}^{d}}(K),\label{eq:ubproc}
\end{equation}
where $\widetilde{\lim}$ refers to a certain limiting procedure,
in which $N$ goes first to infinity, and $H(\widetilde{P}_{N}|P_{0})$
stands for the relative entropy of $\widetilde{P}_{N}$ with respect
to $P_{0}$, (see (\ref{eq:relentrodef})). The main claim (\ref{eq:mainlb}),
or (\ref{eq:mainlbvariant}) then quickly follow by a classical inequality,
see (\ref{eq:Entropychange}).

The above lines are of course mainly heuristic, and the actual proof
involves several mollifications of the above strategy: $K$ is slightly
enlarged, $h$ is replaced by a compactly supported function smoothed
near $K$, we work with $u_{**}(1+\epsilon)$ in place of $u_{**}$,
and the tilted walk lives in a ball of radius $RN$ up to time $T_{N}$,
$\ldots$. These various mollifications naturally enter the limiting
procedure alluded to above in (\ref{eq:ubproc}). 

Clearly, a substantial part of this work is to make sense of the above
heuristics. Observe that unlike what happened in \cite{Li-Szn lb}, 
where an asymptotic lower bound was derived for the disconnection of a macroscopic
body by random interlacements, in the present set-up, we only have one single trajectory at our disposal. So the titled walk behaves as a recurrent walk up to time $T_N$ in order to implement disconnection. This makes the extraction of the necessary independence implicit to comparison with random interlacements more delicate.
This is achieved by several sorts of analysis on the mesoscopic
level. More precisely, on all mesoscopic boxes $A_{1}^{x}$ with the
center $x$ varying in a ``fence'' around $K_{N}$, we bound from
above the tilted probability that there is a path in $\mathcal{V}$
that connects $x$ to the (inner) boundary of $A_{1}^{x}$ by the
probability that there is such a path in the vacant set of random
interlacements with level slightly higher than $u_{**}$ (which is
itself small due to the the strong non-percolative character of $\mathcal{V}^{u}$
when $u>u_{**}$) and a correction term: 
\begin{equation}
\widetilde{P}_{N}[x\overset{\mathcal{V}}{\longleftrightarrow}\partial_{i}A_{1}^{x}]\leq\mathbb{P}[x\overset{\mathcal{V}^{u_{**}(1+\epsilon/8)}}{\longleftrightarrow}\partial_{i}A_{1}^{x}]+e^{-c\log^{2}N}\leq e^{-c'\log^{2}N},
\end{equation}
where $\mathbb{P}$ stands for the law of random interlacements, and
$\partial_{i}A_{1}^{x}$ for the inner boundary of the box $A_{1}^{x}$.
To prove the above claim, we conduct a local comparison at mesoscopic
scale between the trace of the the tilted walk, and the occupied set
of random interlacements, with a level slightly exceeding $u_{**}$,
via a chain of couplings.

There are two crucial steps in this ``chain of couplings'', namely
Propositions \ref{prop:coupling0} and \ref{prop:coupling3}. In Proposition
\ref{prop:coupling0} we call upon the estimates on hitting times
proved in Section 3 and on the results concerning the quasi-stationary
measure from Section 4. We construct a coupling between the trace
in $A_{1}$ of excursions of the confined walk up to time $T_{N}$,
and the trace in $A_{1}$ of the excursions of many independent confined
walks from $A_{1}$ to the boundary of a larger mesoscopic box. This
proposition enables us to cut the confined walk into ``almost'' independent
sections, and compare it to the trace of a suitable Poisson point
process of excursions. On the other hand, Proposition \ref{prop:coupling3}
uses a result proved in \cite{Gumbel}, coupling the above mentioned
Poisson point process of excursions and the trace of random interlacements.
Some of the arguments used in this work are similar to those in \cite{TeiWin}.
However, in our set-up, special care is needed due to the fact that
the stationary measure of the tilted walk is massively non-uniform. 

\vspace{0.3cm}

We will now explain how this article is organized. In Section 1 we
introduce notation and make a brief review of results concerning continuous-time
random walk, continuous-time random interlacements, Markov chains,
as well as other useful facts and tools. Section 2 is devoted to the
construction of the tilted random walk and the confined walk, as well
as the proof of various properties concerning them. Most important
are a lower bound of the spectral gap of the confined walk in Proposition
\ref{prop:spectralgap}, and an asymptotic upper bound on the relative
entropy between the tilted walk and the simple random walk, in Proposition
\ref{prop:limsup}. In Section 3 we prove some estimates on the hitting
times of some mesoscopic objects, namely Propositions \ref{prop:entranceinvup}
and \ref{prop:entranceinvlb} that will be useful in Section 5. In
Section 4 we prove some controls (namely Proposition \ref{prop:qsdem})
on the quasi-stationary measure that will be crucial for the construction
of couplings in Section 5. In Section 5 we develop the chain of couplings
and prove that the tilted disconnection probability $\widetilde{P}_{N}[A_{N}]$
tends to $1$, as $N$ tends to infinity. In the short Section 6 we
assemble the various pieces and prove the main Theorem \ref{thm:mainTheorem}.

Finally, we explain the convention we use concerning constants. We
denote by $c$, $c'$, $c''$, $\bar{c}\ldots$ positive constants
with values changing from place to place. Throughout the article the
constants depend on the dimension $d$. Dependence on additional constants
is stated at the beginning of each section.\vspace{0.3cm}

\textbf{Acknowledgement.} The author wishes to thank his advisor A.-S.
Sznitman for suggesting the problem and for useful discussions.

\section{Some useful facts}

Throughout the article we assume $d\geq3$ unless otherwise stated.
In this section we will introduce further notation and recall useful
facts concerning continuous-time random walk on $\mathbb{Z}^{d}$
and its potential theory. We also recall the definition of and some
results about continuous-time random interlacements. At the end of
the section we state an inequality on relative entropy and review
various results about Markov chain.

We start with some notation. We write $|\cdot|$ and $|\cdot|_{\infty}$
for the Euclidean and $l^{\infty}$-norms on $\mathbb{R}^{d}$. We
denote by $B(x,r)=\{y\in\mathbb{Z}^{d};\:|x-y|\leq r\}$ the closed
Euclidean ball of center $x$ and radius $r\geq0$ intersected with
$\mathbb{Z}^{d}$ and by $B_{\infty}(x,r)=\{y\in\mathbb{Z}^{d},\:|x-y|_{\infty}\leq r\}$
the closed $l^{\infty}$-ball of center $x$ and radius $r$ intersected
with $\mathbb{Z}^{d}$. When $U$ is a subset of $\mathbb{Z}^{d}$,
we write $|U|$ for the cardinality of $U$, and $U\subset\subset\mathbb{Z}^{d}$
means that $U$ is a finite subset of $\mathbb{Z}^{d}$. We denote
by $\partial U$ (resp. $\partial_{i}U$) the boundary (resp. internal
boundary) of $U$, and by $\overline{U}$ its ``closure''
\begin{align}
\begin{split}\partial U & =\{x\in U^{c};\:\exists y\in U,\ |x-y|=1\}\\
\partial_{i}U & =\{y\in U;\:\exists U^{c},\:|x-y|=1\}\quad\textrm{ and }\quad\overline{U}=U\cup\partial U.
\end{split}
\label{eq:boundarydef}
\end{align}
When $U\subset\mathbb{\mathbb{R}}^{d}$ , and $\delta>0$ , we write
$U^{\delta}=\{z\in\mathbb{R}^{d};\: d(z,U)\leq\delta\}$ for the closed
$\delta$-neibourhood of $U$, where $d(x,A)=\inf_{y\in A}|x-y|$
is the Euclidean distance of $x$ to $A$. We define $d_{\infty}(x,A)$
in a similar fashion, with $\big|\cdot\big|_{\infty}$ in place of
$\big|\cdot\big|$. To distinguish balls in $\mathbb{R}^{d}$ from
balls in $\mathbb{Z}^{d}$, we write $B_{\mathbb{R}^{d}}(x,r)\{z\in\mathbb{R}^{d};\:|x-z|\leq r\}$
for the closed Euclidean ball of center $x$ and radius $r$ in $\mathbb{R}^{d}$
and $B_{\mathbb{R}^{d}}^{\circ}(x,r)=\{z\in\mathbb{R}^{d};\:|x-z|<r\}$
for the corresponding open Euclidean ball. We also write the $N$-discrete
blow-up of $U\subseteq\mathbb{R}^{d}$ as 
\begin{equation}
U_{N}=\{x\in\mathbb{Z}^{d};\, d_{\infty}(x,NU)\le1\},\label{eq:blowupdef-1}
\end{equation}
where we denote by $NU=\{Nz;\: z\in U\}$  the homothetic of $U$. 

We will now collect some notation concerning connectivity properties.
We write $x\sim y$ if for $x,y\in\mathbb{Z}^{d}$, $|x-y|=1$. We
call $\pi:\{1,\ldots n\}\to\mathbb{Z}^{d}$ with $n\geq1$ a nearest-neighbour
path, when $\pi(i)\sim\pi(i-1)$ for $1<i\leq n$. Given $K,L,U$
subsets of $\mathbb{Z}^{d}$, we say that $K$ and $L$ are connected
by $U$ and write $K\overset{U}{\longleftrightarrow}L$, if there
exists a finite nearest-neighbour path $\pi$ in $\mathbb{Z}^{d}$
such that $\pi(1)$ belongs to $K$ and $\pi(n)$ belongs to $L$,
and for all $k\in\{1,\cdots,n\}$, $\pi(k)$ belongs to $U$. Otherwise,
we say that $K$ and $L$ are not connected by $U$, and write $K\overset{U}{\nleftrightarrow}L$.
Similarly, we say that $K$ is connected to infinity by $U$, if for
$K,U$ subsets of $\mathbb{Z}^{d}$, $K\overset{U}{\longleftrightarrow}B(0,N)^{c}$
for all $N$, and write $K\overset{U}{\longleftrightarrow}\infty$.
Otherwise we say $K$ is not connected to infinity by $U$, and write
$K\overset{U}{\nleftrightarrow}\infty$. 

\vspace{0.3cm}

We now turn to the definition of some path spaces and of the continuous-time
simple random walk. We consider $\widehat{W}_{+}$ the spaces of infinite
$(\mathbb{Z}^{d})\times(0,\infty)$-valued sequences such that the
first coordinate of the sequence forms an infinite nearest neighbour
path in $\mathbb{Z}^{d}$, spending finite time in any finite set
of $\mathbb{Z}^{d}$, and the sequence of the second coordinate has
an infinite sum. The second coordinate describes the duration at each
step corresponding to the first coordinate. We denote by $\widehat{\mathcal{W}}_{+}$
the respective $\sigma$-algebra generated by the coordinate maps,
$Z_{n}$, $\zeta_{n}$, $n\geq0$ (where $Z_{n}$ is $\mathbb{Z}^{d}$-valued
and $\zeta_{n}$ is $(0,\infty)$-valued). We denote by $P_{x}$ the
law on $\widehat{W}_{+}$ under which $Z_{n}$, $n\geq0$, has the
law of the simple random walk on $\mathbb{Z}^{d}$, starting from
$x$, and $\zeta_{n}$, $n\geq0$, are i.i.d. exponential variables
with parameter $1$, independent from $Z_{n}$, $n\geq0$. We denote
by $E_{x}$ the corresponding expectation. Moreover, if $\alpha$
is a measure on $\mathbb{Z}^{d}$, we denote by $P_{\alpha}$ and
$E_{\alpha}$ the measure $\sum_{x\in\mathbb{Z}^{d}}\alpha(x)P_{x}$
(not necessarily a probability measure) and its corresponding ``expectation''
(i.e. the integral with respect to the measure $P_{\alpha}$). 

We attach to $\widehat{w}\in\widehat{W}_{+}$ a continuous-time process
$(X_{t})_{t\geq0}$ and call it the random walk on $\mathbb{Z}^{d}$
with constant jump rate $1$ under $P_{x}$, through the following
relations
\begin{equation}
X_{t}(\widehat{w})=Z_{k}(\widehat{w})\textrm{ for }t\geq0,\textrm{ when }\tau_{k}\leq t<\tau_{k+1},\label{eq:skeleton chain}
\end{equation}
where for $l$ in $\mathbb{Z}^{+}$, we set (if $l=0$, the right
sum term is understood as 0), 
\begin{equation}
\tau_{l}=\sum_{i=0}^{l-1}\zeta_{i}.\label{eq:taudef}
\end{equation}
We also introduce the filtration
\begin{equation}
\mathcal{F}_{t}=\sigma(X_{s},\: s\leq t).\label{eq:filtration}
\end{equation}
For $I$ a Borel subset of $\mathbb{R}^{+}$, we record the set of
points visited by $(X_{t})_{t\geq0}$ during the time set $I$ as
$X_{I}$. Importantly, we denote by $\mathcal{V}$ the vacant set,
namely the complement of the entire trace $X_{[0,\infty)}$ of $X$.

Given a subset $U$ of $\mathbb{Z}^{d}$, and $\widehat{w}\in\widehat{W}_{+}$,
we write $H_{U}(\widehat{w})=\inf\{t\geq0;\: X_{t}(\widehat{w})\in U\}$
and $T_{U}=\inf\{t\geq0;\: X_{t}(\widehat{w})\notin U\}$ for the
entrance time in $U$ and exit time from $U$. Moreover, we write
$\widetilde{H}_{U}=\inf\{s\geq\zeta_{1};\: X_{s}\in U$\} for the
hitting time of $U$. If $U=\{x\}$ we then write $H_{x}$, $T_{x}$
and $\widetilde{H}_{x}$.

Given a subset $U$ of $\mathbb{Z}^{d}$, we write $\Gamma(U)$ for
the space of all right-continuous, piecewise constant functions from
$[0,\infty)$ to $U$, with finitely many jumps on any compact interval.
We will also denote by $(X_{t})_{t\geq0}$ the canonical coordinate
process on $\Gamma(U)$, and when an ambiguity arises, we will specify
on which space we are working. For $\gamma\in\Gamma(U)$, we denote
by $\mathrm{Range}(\gamma)$ the trace of $\gamma$. 

\vspace{0.3cm}

Now, we recall some facts concerning equilibrium measure and capacity,
and refer to Section 2, Chapter 2 of \cite{Intersection} for more
details. Given $M\subset\subset\mathbb{Z}^{d}$, we write $e_{M}$
for the equilibrium measure of $M$:
\begin{equation}
e_{M}(x)=P_{x}[\widetilde{H}_{M}=\infty]1_{M}(x),\: x\in\mathbb{Z}^{d},\label{eq:emdef}
\end{equation}
and $\mathrm{cap}(M)$ for the capacity of $M$, which is the total
mass of $e_{M}$:
\begin{equation}
\mathrm{cap}(M)=\sum_{x\in M}e_{M}(x).
\end{equation}
We denote the normalized equilibrium measure of $M$ by
\begin{equation}
\widetilde{e}_{M}(x)=\frac{e_{M}(x)}{\mathrm{cap}(M)}.\label{eq:nemdef}
\end{equation}
There is also an equivalent definition of capacity through the Dirichlet
form:
\begin{equation}
\mathrm{cap}(M)=\inf_{f}\mathcal{E}_{\mathbb{Z}^{d}}(f,f)
\end{equation}
where $f:\mathbb{Z}^{d}\to\mathbb{R}$ is finitely supported and $f\geq1$
on $M$, and
\begin{equation}
\mathcal{E}_{\mathbb{Z}^{d}}(f,f)=\frac{1}{2}\sum_{|x-y|=1}\frac{1}{2d}(f(y)-f(x))^{2}\label{eq:Dirichletdef}
\end{equation}
is the discrete Dirichlet form for simple random walk. 

It is well known that (see e.g., Section 2.2, pp. 52-55 of \cite{Intersection})
\begin{equation}
cN^{d-2}\leq\mathrm{cap}(B_{\infty}(0,N))\leq c'N^{d-2},\label{eq:capacityorder}
\end{equation}
and that
\begin{equation}
e_{B_{\infty}(0,N)}(x)\geq c_{1}N^{-1}\label{eq:emboxlb}
\end{equation}
for $x$ on the inner boundary of $B_{\infty}(0,N)$.

\vspace{0.3cm}

Now, we turn to random interlacements. We refer to \cite{CerTeilecture},
\cite{Mousquetaires}, \cite{vacant} and \cite{Isomorphism} for
more details. Random interlacements are random subsets of $\mathbb{Z}^{d}$,
governed by a non-negative parameter $u$ (referred to as the ``level''),
and denoted by $\mathcal{I}^{u}$. We write $\mathbb{P}$ for the
law of $\mathcal{I}^{u}$. Although the construction of random interlacements
is involved, the law of $\mathcal{I}^{u}$ can be simply characterized
by the following relation: 
\begin{equation}
\mathbb{P}[\mathcal{I}^{u}\cap K=\emptyset]=e^{-u\mathrm{cap}(K)}\qquad\textrm{ for all }K\subset\subset\mathbb{Z}^{d}.\label{eq:RIdef}
\end{equation}
We denote by $\mathcal{V}^{u}=\mathbb{Z}^{d}\backslash\mathcal{I}^{u}$
the vacant set of random interlacements at level $u$.

The connectivity function of the vacant set of random interlacements
is known to have a stretched-exponential decay when the level exceeds
a certain critical value (see Theorem 4.1 of \cite{decoupling}, Theorem
0.1 of \cite{sido10}, or Theorem 3.1 of \cite{softlocaltime} for
recent developments). Namely, there exists a $u_{**}\in(0,\infty)$
which, for our purpose in this article, can be characterized as the
smallest positive number such that for all $u>u_{**}$, 
\begin{equation}
\mathbb{P}[0\overset{\mathcal{V}^{u}}{\longleftrightarrow}\partial B_{\infty}(0,N)]\leq c(u)e^{-c'(u)N^{c'(u)}}\textrm{, for all }N\geq0,\label{eq:supercrit}
\end{equation}
(actually, the exponent of $N$ can be chosen as $1$, when $d\geq4$,
and as an arbitrary number in $(0,1)$ when $d=3$, see \cite{softlocaltime}).

\vspace{0.3cm}

We also wish to recall a classical result about relative entropy,
which is helpful in Section \ref{sec:Tilted walk}. For $\widetilde{P}$
absolutely continuous with respect to $P$, the relative entropy of
$\widetilde{P}$ with respect to $P$ is defined as
\begin{equation}
H(\widetilde{P}|P)=E^{\widetilde{P}}[\log\frac{d\widetilde{P}}{dP}]=E^{P}[\frac{d\widetilde{P}}{dP}\log\frac{d\widetilde{P}}{dP}]\in[0,\infty].\label{eq:relentrodef}
\end{equation}
For an event $A$ with positive $\widetilde{P}-$probability, we have
the following inequality (see p. 76 of \cite{LD}):
\begin{equation}
P[A]\ge\widetilde{P}[A]e^{-(H(\widetilde{P}|P)+1/e)/\widetilde{P}[A]}.\label{eq:Entropychange}
\end{equation}

\vspace{0.3cm}

We end this section with some results regarding continuous-time reversible
finite Markov chain. 

Let $L$ be the generator for an irreducible, reversible continuous-time
Markov chain with possibly non-constant jump rate on a finite set
$V$. Let $\pi$ be the stationary measure of this Markov chain. Then
$-L$ is self-adjoint in $l^{2}(\pi)$ and has non-negative eigenvalues
$0=\lambda_{1}<\lambda_{2}\leq\ldots\leq\lambda_{|V|}$. We denote
by $\lambda=\lambda_{2}$ its spectral gap. For any real function
$f$ on $V$ we define its variance with respect to $\pi$ as $\mathrm{Var}_{\pi}(f)$.
Then the semigroup $H_{t}=e^{tL}$ satisfies
\begin{equation}
||H_{t}f-\pi(f)||_{2}\leq e^{-\lambda t}\sqrt{\mathrm{Var}_{\pi}(f)}.\label{eq:spectralgap}
\end{equation}
One can further show that, for all $x$ and $y$ in $V$, 
\begin{equation}
\Big|P_{x}(X_{t}=y)-\pi(y)\Big|\leq\sqrt{\frac{\pi(y)}{\pi(x)}}e^{-\lambda t},\label{eq:spectralgap2}
\end{equation}
 see pp. 326-328 of \cite{Saloff-Coste} for more detail. 

We also introduce the so-called ``canonical path method'' to give
a lower bound on the spectral gap $\lambda$. We denote by $E$ the
edge set 
\begin{equation}
\big\{\{x,y\};\, x,y\in V,\, L_{x,y}>0\big\}
\end{equation}
where $L_{x,y}$ is the matrix coefficient of $L$. We investigate
the following quantity $A$
\begin{equation}
A=\max_{e\in E}\{\frac{1}{W(e)}\sum_{x,y,\gamma(x,y)\ni e}\mathrm{leng}(\gamma(x,y))\pi(x)\pi(y)\}\label{eq:Adef}
\end{equation}
where $\gamma$ is a map, which sends ordered pairs of vertices $(x,y)\in V\times V$
to a finite path $\gamma(x,y)$ between $x$ and $y$, $\mathrm{leng}(\gamma)$
denotes the length of $\gamma$, and
\begin{equation}
W(e)=\pi(x)L_{x,y}=\pi(y)L_{y,x}=(1_{x},L1_{y})_{l^{2}(\pi)}=(L1_{x},1_{y})_{l^{2}(\pi)}\label{eq:edgedef}
\end{equation}
is the edge-weight of $e=\{x,y\}\in E$. Then the proof of Theorem
3.2.1, p. 369 of \cite{Saloff-Coste} is also valid (note that actually
$e$ in \cite{Saloff-Coste} is an oriented edge) in the present set-up
of possibly non-constant jump rates and shows that
\begin{equation}
\lambda\geq\frac{1}{A}.\label{eq:pathmethod}
\end{equation}

\section{{\normalsize{\label{sec:Tilted walk}}}The tilted random walk}

In this section, we construct the main protagonists of this work:
a time non-homogenous Markov chain on $\mathbb{Z}^{d}$, which we
will refer to as \textit{the tilted walk}, as well as a continuous-time
homogenous Markov chain on a (macroscopic) finite subset of $\mathbb{Z}^{d}$,
which we will refer to as \textit{the confined walk}. The tilted walk
coincides with the confined walk up to a certain finite time, which
is of order $N^{d}$, and then evolves as a simple random walk. We
derive a lower bound on the spectral gap of the confined walk in Proposition
\ref{prop:spectralgap}. In Proposition \ref{prop:limsup}, we prove
that with a suitable limiting procedure, the relative entropy between
the tilted random walk and the simple random walk has an asymptotic
upper bound given by a quantity involving the Brownian capacity of
$K$ that appears in Theorem \ref{thm:mainTheorem}. In this section
the constants tacitly depend on $\delta$, $\eta$, $\epsilon$ and
$R$ (see (\ref{eq:deltadef}) and (\ref{eq:Rr})).

We recall that $K$ is a compact subset of $\mathbb{R}^{d}$ as above
(\ref{eq:KNdef0}). We assume, without loss of generality, that 
\begin{equation}
0\in K.\label{eq:0inK}
\end{equation}
Otherwise, as we now explain, we can replace $K$ by $\widetilde{K}=K\cup\{0\}$:
on the one hand, by the monotonicity and subadditivity of Brownian
capacity (see for example Proposition 1.12, p. 60 of \cite{Portstone}),
one has $\mathrm{cap}_{\mathbb{R}^{d}}(K)=\mathrm{cap}_{\mathbb{R}^{d}}(\widetilde{K})$;
on the other hand, since $K\subseteq\widetilde{K}$, it is more difficult
to disconnect $\widetilde{K}_{N}$ than to disconnect $K_{N}$, hence
$P_{0}[K_{N}\overset{\mathcal{V}}{\nleftrightarrow}\infty]\geq P_{0}[\widetilde{K}_{N}\overset{\mathcal{V}}{\nleftrightarrow}\infty]$.
This means that the lower bound (\ref{eq:mainlb}) with $K$ replaced
by $\widetilde{K}$ implies $(\ref{eq:mainlb})$, justifying our claim.
From now on, for the sake of simplicity, for any $r>0$ we write $B_{(r)}$
for the open ball $B_{\mathbb{R}^{d}}^{\circ}(0,r)$ and $B_{r}$
for the closed ball $B_{\mathbb{R}^{d}}(0,r)$. We introduce the three
parameters 
\begin{equation}
0<\delta,\eta,\epsilon<1,\label{eq:deltadef}
\end{equation}
where $\delta$ will be used as a smoothing radius for $K$, see (\ref{eq:K2dinclusion}),
$\eta$ will be used as a parameter in the construction of $\widetilde{h}$,
the smoothed potential function, see (\ref{eq:thproperty}), and $\epsilon$
will be used as a parameter in the definition of $T_{N}$, the time
length of ``tilting'', see (\ref{eq:TNdef}). We let $R>400$ be a
large integer (see Remark \ref{integer} for explanations on why we
take $R$ to be an integer) such that 
\begin{equation}
K\subset B_{R/100}.\label{eq:Rr}
\end{equation}
By definition of $R$ we always have
\begin{equation}
K^{2\delta}\subset B_{R/50}.\label{eq:K2dinclusion}
\end{equation}

\vspace{0.3cm}
In the next lemma, we show the existence of a function $\widetilde{h}$
that satisfies various properties (among which the most important
is an inequality relating its Dirichlet form to the relative Brownian
capacity of $K^{2\delta}$), which, as we will later show, make it
the right candidate for the main ingredient in the construction of
the tilted walk. 

We denote by $\mathcal{E}_{\mathbb{R}^{d}}(f,f)=\frac{1}{2}\int_{\mathbb{R}^{d}}|\nabla f(x)|^{2}dx$
for $f\in H^{1}(\mathbb{R}^{d})$ the usual Dirichlet form on $\mathbb{R}^{d}$
(see Example 4.2.1, p. 167 and (1.2.12), p. 11 of \cite{Fukushima}),
and by $\mathrm{cap}_{\mathbb{R}^{d},B_{(R)}}(K^{2\delta})$ the relative
Brownian capacity of $K^{2\delta}$ with respect to $B_{(R)}$. We
write $C^{\infty}(B_{R})$ for the set of functions having all derivatives
of every order continuous in $B_{(R)}$, which all have continuous
extensions to $B_{R}$ (see p. 10 of \cite{GilTru} for more details). 
\begin{lem}
There exists a continuous function $\widetilde{h}:\mathbb{R}^{d}\to\mathbb{R}$,
satisfying the following properties
\begin{equation}
\begin{cases}
1. & \widetilde{h}\mbox{ is a }\ensuremath{C^{\infty}(B_{R})}\mbox{ function when restricted to \ensuremath{B_{R}}, and harmonic on }B_{(R)}\backslash B_{R/2};\\
2. & 0\leq\widetilde{h}(z)\leq1\mbox{ for all }z\in\mathbb{R}^{d}\mbox{, }\widetilde{h}=1\mbox{ on }K^{2\delta}\mbox{, and }\widetilde{h}(z)=0\mbox{ outside }B_{(R)};\\
3. & {\cal E}_{\mathbb{R}^{d}}(\widetilde{h},\widetilde{h})\leq(1+\eta)^{2}\mathrm{cap}_{\mathbb{R}^{d},B_{(R)}}(K^{2\delta});\\
4. & cw_{1}\leq\widetilde{h}\leq c'w_{2}\mbox{ where }w_{1},w_{2}\mbox{ are defined respectively in (\ref{eq:w1}) and (\ref{eq:w2});}\\
5. & \widetilde{h}(z_{1})\geq c\widetilde{h}(z_{2})\mbox{ for all }z_{1},z_{2}\in\mathbb{R}^{d}\mbox{ such that }|z_{1}|\leq|z_{2}|\leq R.
\end{cases}\label{eq:thproperty}
\end{equation}
\end{lem}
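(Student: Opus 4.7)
The plan is to take $\widetilde{h} = \phi \circ h^{\ast}$, where $h^{\ast}$ is the Brownian equilibrium potential of a smooth enlargement $\overline{U}$ of $K^{2\delta}$ relative to $B_{(R)}$, and $\phi \colon [0,1] \to [0,1]$ is a $C^{\infty}$ function equal to the identity on $[0, s_{0}]$ (for a suitable $s_{0} < 1$) and flat to infinite order at $s = 1$; the composition smooths the gradient jump of $h^{\ast}$ at $\partial U$ at the cost of a slightly inflated Dirichlet form, while leaving $h^{\ast}$ untouched on the outer annulus.

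First I would construct a bounded open set $U$ with $C^{\infty}$ boundary satisfying $K^{2\delta} \subset U \subset K^{2\delta + \alpha}$ for a small parameter $\alpha = \alpha(\eta) > 0$: such $U$ is obtained as a regular sublevel set of a mollification of the distance function to $K^{2\delta}$, using Sard's theorem to pick a regular level. By (\ref{eq:K2dinclusion}) and smallness of $\alpha$, we may arrange $\overline{U} \subset B_{R/25}$, and in particular $\overline{U} \subset B_{R/4}$. Let $h^{\ast}$ be the classical solution on $\overline{B_{(R)}}$ of $\Delta h^{\ast} = 0$ on $B_{(R)} \setminus \overline{U}$, $h^{\ast} = 1$ on $\overline{U}$, $h^{\ast} = 0$ on $\partial B_{(R)}$. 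Since both $\partial U$ and $\partial B_{(R)}$ are smooth, classical Schauder theory gives $h^{\ast} \in C^{\infty}(\overline{B_{(R)}} \setminus U)$, and $\mathcal{E}_{\mathbb{R}^{d}}(h^{\ast}, h^{\ast}) = \mathrm{cap}_{\mathbb{R}^{d}, B_{(R)}}(\overline{U})$. Continuity of Brownian capacity under Minkowski enlargement then yields $\mathrm{cap}_{\mathbb{R}^{d}, B_{(R)}}(\overline{U}) \leq (1 + \eta_{1}) \mathrm{cap}_{\mathbb{R}^{d}, B_{(R)}}(K^{2\delta})$ for any prescribed $\eta_{1} > 0$.

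Next, because $\overline{U}$ lies at positive distance from $\partial B_{R/2}$, the strong maximum principle yields $b := \max_{\partial B_{R/2}} h^{\ast} < 1$. Fix $s_{0} \in (b, 1)$ and choose a $C^{\infty}$ increasing $\phi \colon [0,1] \to [0,1]$ with $\phi(s) = s$ on $[0, s_{0}]$, $\phi(1) = 1$, $\phi^{(k)}(1) = 0$ for all $k \geq 1$, and $\max_{[0,1]} \phi' \leq 1 + \eta_{2}$ for a prescribed $\eta_{2} > 0$ (feasible since $\phi$ only has to interpolate with controlled slope on $[s_{0}, 1]$). Set $\widetilde{h} := \phi \circ h^{\ast}$ on $\overline{B_{(R)}}$ and $\widetilde{h} := 0$ outside. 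Then (\ref{eq:thproperty}) follows: for property 1, smoothness on $B_{R}$ is the content of Schauder together with flat-at-$1$-ness of $\phi$ handling the gradient jump of $h^{\ast}$ at $\partial U$, while harmonicity on $B_{(R)} \setminus B_{R/2}$ holds because $h^{\ast} \in [0, b] \subset [0, s_{0}]$ on that annulus by the maximum principle, so $\widetilde{h} = h^{\ast}$ there; property 2 is immediate from $\phi([0,1]) \subseteq [0,1]$ and $\phi(1) = 1$; property 3 follows from the chain rule $|\nabla \widetilde{h}|^{2} = |\phi'(h^{\ast})|^{2} |\nabla h^{\ast}|^{2} \leq (1 + \eta_{2})^{2} |\nabla h^{\ast}|^{2}$, yielding $\mathcal{E}_{\mathbb{R}^{d}}(\widetilde{h}, \widetilde{h}) \leq (1 + \eta_{1})(1 + \eta_{2})^{2} \mathrm{cap}_{\mathbb{R}^{d}, B_{(R)}}(K^{2\delta}) \leq (1+\eta)^{2} \mathrm{cap}_{\mathbb{R}^{d}, B_{(R)}}(K^{2\delta})$ for $\eta_{1}, \eta_{2}$ small; properties 4 and 5 follow from the stochastic interpretation $h^{\ast}(z) = P_{z}^{W}[T_{\overline{U}}^{W} < T_{\partial B_{(R)}}^{W}]$, using Green-function comparisons for the bounds by the weights $w_{1}, w_{2}$ and a Harnack / maximum-principle argument on spherical shells for the quasi-radial monotonicity, the monotonicity and range of $\phi$ preserving these bounds up to multiplicative constants.

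The main obstacle is simultaneously achieving $C^{\infty}$ smoothness across $\partial U$, harmonicity on $B_{(R)} \setminus B_{R/2}$, and the Dirichlet-form bound with the prescribed $(1+\eta)^{2}$ factor: naive smoothings by convolution would ruin harmonicity in a $\sigma$-neighborhood of $\partial B_{(R)}$ and hence inside the outer annulus, and naive partition-of-unity cutoffs in a neighborhood of $\partial U$ are delicate to estimate. The composition trick $\phi \circ h^{\ast}$ resolves this by decoupling the two problems: infinite-order flatness of $\phi$ at $1$ absorbs the gradient jump at $\partial U$ at minimal Dirichlet-energy cost, and the identity segment of $\phi$ on $[0, s_{0}]$ preserves $h^{\ast}$ on the outer annulus where $h^{\ast} \leq b < s_{0}$; this separation of scales is possible precisely because the geometric condition $\overline{U} \subset B_{R/4}$ (ensured by (\ref{eq:K2dinclusion})) keeps $\overline{U}$ at uniform distance from $\partial B_{R/2}$.
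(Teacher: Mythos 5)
Your construction is correct, and it reaches the same lemma by a genuinely different implementation of the same underlying idea (compose the equilibrium potential with a nonlinearity that flattens it near the value $1$). The paper works directly with the relative equilibrium potential $h$ of $K^{2\delta}$ itself and sets $\widetilde{h}=\psi\circ\bigl((1+\eta)h\bigr)$ with $\psi$ a normal contraction equal to the identity on $(-\infty,\tfrac12]$ and to $1$ on $[1+\tfrac{\eta}{2},\infty)$: the overshoot by the factor $(1+\eta)$ forces $\widetilde{h}\equiv1$ on an open neighbourhood of $K^{2\delta}$, so smoothness near $K^{2\delta}$ is automatic and no boundary regularity of $K^{2\delta}$ (hence no smooth enlargement, no Schauder theory) is needed; the bound $\mathcal{E}_{\mathbb{R}^{d}}(\widetilde{h},\widetilde{h})\leq(1+\eta)^{2}\mathrm{cap}_{\mathbb{R}^{d},B_{(R)}}(K^{2\delta})$ then falls out of the normal-contraction property $(\mathcal{E}.4)''$ of Markovian Dirichlet forms. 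Your route instead pays for smoothness across the interface with two extra ingredients that the paper avoids: the smooth sublevel-set enlargement $U$ (needed because your flat-at-$1$ composition only produces a $C^{\infty}$ gluing if $h^{*}$ is $C^{\infty}$ up to $\partial U$ from outside, which fails for a general compact) and the continuity of the relative capacity under Minkowski enlargement to recover the $(1+\eta)^{2}$ factor. In exchange, your argument replaces the abstract contraction property by the bare chain rule, and the feasibility of $\phi$ with $\max\phi'\leq1+\eta_{2}$ despite $\int_{s_{0}}^{1}\phi'=1-s_{0}$ and $\phi'(1)=0$ does check out (the deficit near $1$ can be confined to an interval of length $O(\eta_{2})$). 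Both proofs treat the outer annulus identically: the potential is below the identity-segment threshold there, so the composition leaves it harmonic; and claims 2, 4, 5 are obtained in both cases from the probabilistic representation, the comparison with $w_{1},w_{2}$, and the affine two-sided bounds $cr\leq\phi(r)\leq c'r$ (resp.\ $cr\leq\psi(r)\leq c'r$).
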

\begin{proof}
We now construct $\widetilde{h}$. We define, with $\delta$ as in
(\ref{eq:deltadef}), 
\begin{equation}
h(z)=W_{z}[H_{K^{2\delta}}<T_{B_{(R)}}],\qquad\forall z\in\mathbb{R}^{d},\label{eq:defh}
\end{equation}
the Brownian relative equilibrium potential function, where $W_{z}$
stands for the Wiener measure starting from $z\in\mathbb{R}^{d}$,
and $H_{K^{2\delta}}$ and $T_{B_{(R)}}$ respectively stand for the
entrance time of the canonical Brownian motion in $K^{2\delta}$ and
its exit time from $B_{(R)}$. 

We let $\psi:\mathbb{R}\to\mathbb{R}$ be a smooth, non-decreasing
and concave function such that $0\leq(\psi)'(z)\leq1$ for all $z\in\mathbb{R}$,
$\psi(z)=z$ for $z\in(-\infty,\frac{1}{2}]$, and $\psi(z)=1$ for
$z\in[1+\frac{\eta}{2},\infty)$. We consider
\begin{equation}
\widetilde{h}=\psi\circ\big((1+\eta)h\big).\label{eq:tiltedhdef}
\end{equation}

Now we prove the claims. 

We first prove claim 1.~in (\ref{eq:thproperty}). It is classical
that $h$ is $C^{\infty}$ on $B_{(R)}\backslash K^{2\delta}$. In
addition, $h$ is continuous, equal to $1$ on $K^{2\delta}$ and
to $0$ outside $B_{(R)}$ (note that every point in $K^{2\delta}$
is regular for $K^{2\delta}$). In particular, $(1+\eta)h\geq1+\eta/2$
on an open neighbourhood of $K^{2\delta}$, which implies that $\widetilde{h}$
is identically equal to $1$ on this neighbourhood. It follows that
$\widetilde{h}$ is $C^{\infty}$ on $B_{(R)}$. From the representation
of $h$ with the killed Green function in $B_{(R)}$, one also sees
that $\widetilde{h}$ is $C^{\infty}$ on $B_{R}$.

Claim 2.~follows directly from the definition of $\widetilde{h}$:
for all $z\in\mathbb{R}^{d}$, $(1+\eta)h(z)\in[0,1+\eta]$, hence
by the definition of $\psi$, $\widetilde{h}(z)\in[0,1]$; $\widetilde{h}=1$
on $K^{2\delta}$ is already shown in claim 1. of (\ref{eq:thproperty});
moreover, by (\ref{eq:defh}), outside $B_{(R)}$, $h=0$, hence $\widetilde{h}=0$.

We now prove claim 3. By $({\cal E}.4)''$,$\mbox{ p.\,5 }$of \cite{Fukushima},
an equivalent characterization of Markovian Dirichlet form, one knows
that since $\psi$ is a normal contraction, 
\begin{equation}
\mathcal{E}_{\mathbb{R}^{d}}(\widetilde{h},\widetilde{h})\leq\mathcal{E}_{\mathbb{R}^{d}}((1+\eta)h,(1+\eta)h)=(1+\eta)^{2}\mathcal{E}_{\mathbb{R}^{d}}(h,h)=(1+\eta)^{2}\mathrm{cap}_{\mathbb{R}^{d},B_{(R)}}(K^{2\delta}),
\end{equation}
where the last equality follows from \cite{Fukushima}, pp. 152 and
71. 

We now turn to claim 4. Because $B_{\delta}\subset K^{2\delta}\subset B_{R/50}$
by (\ref{eq:0inK}), we know that 
\begin{equation}
w_{1}\leq h\leq w_{2},\mbox{ on }B_{R},\label{eq:w1hw2}
\end{equation}
where 
\begin{equation}
w_{1}(z)=W_{z}[H_{B_{\delta}}<T_{B_{(R)}}]=\begin{cases}
1 & |z|\in[0,\delta)\\
\frac{|z|^{2-d}-R^{2-d}}{\delta{}^{2-d}-R^{2-d}} & |z|\in[\delta,R)\\
0 & |z|\in[R,\infty)
\end{cases}\label{eq:w1}
\end{equation}
and 
\begin{equation}
w_{2}(z)=W_{z}[H_{B_{R/50}}<T_{B_{(R)}}]=\begin{cases}
1 & |z|\in[0,R/50)\\
\frac{|z|^{2-d}-R^{2-d}}{(R/50){}^{2-d}-R^{2-d}} & |z|\in[R/50,R)\\
0 & |z|\in[R,\infty).
\end{cases}\label{eq:w2}
\end{equation}
are respectively the Brownian relative equilibrium potential functions
of $B_{\delta}$ and $B_{R/50}$ (see (4) in Section 1.7, $\mbox{p. 29}$
of \cite{BMMA} for the explicit formula of $w_{1}$ and $w_{2}$).
By the definition of $\psi$, we also know that, $cr\leq\psi(r)\leq c'r$
for $0\leq r\leq1+\eta$. Hence by the definition of $\widetilde{h}$,
we find that 
\begin{equation}
\widetilde{c}w_{1}\overset{(\ref{eq:w1hw2})}{\leq}ch\leq\widetilde{h}\leq c'h\overset{(\ref{eq:w1hw2})}{\leq}\widetilde{c'}w_{2}.\label{eq:hhtilde}
\end{equation}
Claim 4. hence follows.

Finally, claim 5.~follows by claim 4.~and the observation from the
explicit formula of $w_{1}$ and $w_{2}$ that $w_{1}\geq cw_{2}$
uniformly for some positive $c$ on $B_{R}$ and both $w_{1}$ and
$w_{2}$ are radially symmetric and radially non-increasing: 
\begin{equation}
\widetilde{h}(z_{1})\geq cw_{1}(z_{1})\geq c'w_{2}(z_{1})\geq c'w_{2}(z_{2})\geq c''\widetilde{h}(z_{2})\mbox{ for }z_{1},z_{2}\mbox{ such that }|z_{1}|\leq|z_{2}|\leq R.
\end{equation}

\end{proof}
We then introduce the restriction to $\mathbb{Z}^{d}$ of the blow-up
of $\widetilde{h}$ and its $l^{2}(\mathbb{Z}^{d})-$normalization
as 
\begin{equation}
h_{N}(x)=\widetilde{h}(\frac{x}{N})\textrm{ for }x\in\mathbb{Z}^{d};\mbox{ and }f(x)=\frac{h_{N}(x)}{||h_{N}||_{2}},\label{eq:hN+fdef}
\end{equation}
and also set (see (\ref{eq:deltadef}) for the definition of $\epsilon$)
\begin{equation}
T_{N}=u_{**}(1+\epsilon)||h_{N}||_{2}^{2},\label{eq:TNdef}
\end{equation}
(recall that $u_{**}$ is the threshold of random interlacements defined
above (\ref{eq:supercrit})). We define $T_{N}$ in a way such that
the quantity $T_{N}f^{2}$ is bigger than $u_{**}$ on $K_{N}^{\delta}$,
which, roughly speaking, makes the occupational time profile of the
tilted random walk (which we will later define) at time $T_{N}$ on
$K_{N}^{\delta}$ bigger than that of the random interlacement with
intensity $u_{**}$. We also set 
\begin{equation}
U^{N}=B_{(NR)}\cap\mathbb{Z}^{d}.\label{eq:UNdef}
\end{equation}
This will be the state space of the confined random walk that we will
later define.

In the following lemma we record some basic properties of $f$. Intuitively
speaking, $f$ is a volcano-shaped function, with maximal value on
$K_{N}^{\delta}$ that vanishes outside $U^{N}$. Note that $f$ tacitly
depends on $\delta$, $\eta$ and $R$.
\begin{lem}
\label{lem:fproperty}For large $N$, one has
\begin{equation}
\begin{cases}
1. & f\textrm{ is supported on }U^{N}\mbox{ and }f>0\mbox{ on }U^{N};\\
2. & f^{2}\textrm{ is a probability measure on }\mathbb{Z}^{d}\mbox{ supported on }U^{N};\\
3. & T_{N}f^{2}(\cdot)=u_{**}(1+\epsilon)\textrm{ on }K_{N}^{\delta}.
\end{cases}\label{eq:fproperty}
\end{equation}
\end{lem}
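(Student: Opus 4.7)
The plan is to verify the three claims in sequence, using the properties of $\widetilde{h}$ recorded in (\ref{eq:thproperty}) together with the definitions (\ref{eq:hN+fdef}), (\ref{eq:TNdef}) and (\ref{eq:UNdef}).

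For claim 1, I would start from claim 2 of (\ref{eq:thproperty}), which states that $\widetilde{h}$ vanishes outside $B_{(R)}$, and rescale: $h_N(x) = \widetilde{h}(x/N)$ vanishes outside $\{x\in\mathbb{Z}^d;|x|<NR\}=U^N$, and hence so does $f$. For strict positivity on $U^N$, I would invoke claim 4 of (\ref{eq:thproperty}), namely $\widetilde{h}\geq cw_1$, together with the explicit formula (\ref{eq:w1}) showing that $w_1>0$ throughout $B_{(R)}$; rescaling to $U^N$ gives $h_N>0$ and hence $f>0$ on $U^N$. (Alternatively one may combine claim 2, which gives $\widetilde{h}=1$ on $K^{2\delta}$, with claim 5 of (\ref{eq:thproperty}) applied to any $z_2$ with $|z_2|\le R$.)

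Claim 2 is essentially a normalization check: $f^2(x)=h_N(x)^2/\|h_N\|_2^2$ is nonnegative by construction, supported on $U^N$ by claim 1, and sums to $\|h_N\|_2^{-2}\sum_{x\in\mathbb{Z}^d}h_N(x)^2=1$, so it defines a probability measure on $\mathbb{Z}^d$. Note that $\|h_N\|_2$ is finite and positive for large $N$ precisely because of claims 1 and 2 of (\ref{eq:thproperty}) together with the fact that $U^N$ is finite and $h_N$ is strictly positive on at least one lattice point in $U^N$ (e.g.\ on $K_N^\delta$, which is nonempty for large $N$ since $0\in K$ by (\ref{eq:0inK})).

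For claim 3, unwinding the definitions gives
\begin{equation}
T_N f^2(x)=u_{**}(1+\epsilon)\,\|h_N\|_2^2\cdot\frac{h_N(x)^2}{\|h_N\|_2^2}=u_{**}(1+\epsilon)\,h_N(x)^2,
\end{equation}
so it suffices to show $h_N(x)=1$ for every $x\in K_N^\delta$ once $N$ is large. By the blow-up definition (\ref{eq:blowupdef-1}), $x\in K_N^\delta$ means there exists $y\in NK^\delta$ with $|x-y|_\infty\le 1$; dividing by $N$, $x/N$ is within Euclidean distance $\sqrt{d}/N$ of $K^\delta$, and hence within Euclidean distance $\delta+\sqrt{d}/N$ of $K$. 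For $N$ large enough that $\sqrt{d}/N<\delta$, this places $x/N$ in $K^{2\delta}$, and claim 2 of (\ref{eq:thproperty}) yields $h_N(x)=\widetilde{h}(x/N)=1$. This finishes claim 3.

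None of the three claims is deep — the statement is really a bookkeeping lemma recording that the volcano-shaped normalization $f$ behaves as advertised. The only subtlety worth watching is the passage from $K_N^\delta$ to $K^{2\delta}$ in claim 3, which is exactly why the smoothing radius $2\delta$ (rather than $\delta$) is built into property 2 of (\ref{eq:thproperty}); the mild threshold on $N$ needed for $\sqrt{d}/N<\delta$ accounts for the ``large $N$'' qualifier in the statement.
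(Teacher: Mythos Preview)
Your proof is correct and follows the same approach as the paper's own proof, which is much terser (essentially just ``follows by definition'' with a one-line reference to $h_N=1$ on $K_N^\delta$ for large $N$). You have simply supplied the details that the paper leaves implicit, in particular the passage from $K_N^\delta$ to $K^{2\delta}$ via the $\sqrt{d}/N<\delta$ threshold, which is exactly the content behind the paper's ``for large $N$'' qualifier.
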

\begin{proof}
Claims 1.~and 2.~follow by the definition of $f$ (see (\ref{eq:hN+fdef}))
and $U^{N}$ (see (\ref{eq:UNdef})), note that by (\ref{eq:UNdef})
$x\in U^{N}$ implies $\frac{x}{N}$ belongs to the \textit{open}
ball $B_{(R)}$. Claim 3. follows from the definition of $T_{N}$
(see (\ref{eq:TNdef})) and the fact that $h_{N}=1$ on $K_{N}^{\delta}$
for large $N$.
\end{proof}
We introduce a subset of $U^{N}$ (which will be used in Lemma \ref{lem:vbound})
\begin{equation}
O^{N}=\Big\{ U^{N}\backslash\big(\partial_{i}U^{N}\cup B_{NR/2}\big)\Big\}\bigcup\Big\{ x\in\partial_{i}U^{N};\,|y|=NR\mbox{ for all }y\sim x,\, y\notin U^{N}\Big\}.\label{eq:ONdef}
\end{equation}
Intuitively speaking, $O^{N}$ denotes the set of points in $U^{N}$
which have distance at least $NR/2$ from $0$ such that all their
neighbours outside $U^{N}$ (if they exist) are on the sphere $\partial B_{NR}$.
In the next lemma we collect some properties of $h_{N}$ and $T_{N}$
for later use, in particular in the proofs of Lemmas \ref{lem:pippy},
\ref{lem:vbound} and Propositions \ref{prop:entropyub}, \ref{prop:limsup}. 
\begin{lem}
\label{lem:hNproperty}For large $N$, one has
\begin{equation}
\begin{cases}
1. & cN^{-2}\leq h_{N}(x)\leq1\mbox{ for all }x\in U^{N};\\
2. & h_{N}(x)\leq cN^{-1}\mbox{ for all }x\in\partial_{i}U^{N};\\
3. & h_{N}(x)\geq c'N^{-1}\mbox{ for all }x\in O^{N};\\
4. & c'N^{d}\leq||h_{N}||_{2}^{2}\leq c''N^{d};\\
5. & cN^{d}\leq T_{N}\leq c'N^{d}.
\end{cases}\label{eq:hNproperty}
\end{equation}
\end{lem}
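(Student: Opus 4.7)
The plan is to derive the five estimates from the radial sandwich $cw_1\le\widetilde h\le c'w_2$ supplied by item~4 of (\ref{eq:thproperty}), the explicit formulas (\ref{eq:w1}) and (\ref{eq:w2}), and a Taylor expansion at $r=R$. The key observation is that writing $r=R-\epsilon$ with $\epsilon>0$ small,
\begin{equation*}
r^{2-d}-R^{2-d}=(d-2)R^{1-d}\epsilon+O(\epsilon^2),
\end{equation*}
so both $w_1(r)$ and $w_2(r)$ are comparable to $R-r$ uniformly on any compact subset of $[R/50,R]$. Consequently, pointwise upper and lower bounds on $h_N(x)=\widetilde h(x/N)$ reduce to controlling the Euclidean distance from $x/N$ to $\partial B_R$.

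For claim~1 the upper bound $h_N\le1$ is immediate from item~2 of (\ref{eq:thproperty}). For the lower bound, any $x\in U^N$ satisfies $|x|^2<N^2R^2$; since $|x|^2$ and $N^2R^2$ are both non-negative integers (recall $R$ is an integer), this forces $|x|^2\le N^2R^2-1$, hence $R-|x|/N\ge c/N^2$, whence $w_1(x/N)\ge c'/N^2$ and $h_N(x)\ge c''/N^2$. For claim~2, any $x\in\partial_iU^N$ has a neighbour $y\notin U^N$ with $|y|\ge NR$ and $|y-x|=1$, so $|x|\ge NR-1$, giving $R-|x|/N\le 1/N$ and $w_2(x/N)\le c/N$.

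Claim~3 is the delicate one. For $x$ in the first piece of $O^N$ I would show that $x\notin\partial_iU^N$ forces $R-|x|/N\ge c/N$: letting $i^*=\arg\max_i|x_i|$ one has $|x_{i^*}|\ge|x|/\sqrt d\ge cNR$ since $|x|>NR/2$, and if $|x|$ were within $1/(2\sqrt d)$ of $NR$ then the neighbour $x+\mathrm{sgn}(x_{i^*})e_{i^*}$ would satisfy $|x+\mathrm{sgn}(x_{i^*})e_{i^*}|^2\ge|x|^2+2|x|/\sqrt d+1>N^2R^2$, contradicting that all $2d$ neighbours of $x$ lie in $U^N$. For $x$ in the second piece, assuming without loss of generality that an outside neighbour is $x+e_1$, the condition that every outside neighbour has $|y|=NR$ combined with $|x\pm e_i|^2\le N^2R^2$ for the remaining directions yields $x_1\ge0$, $|x_i|\le x_1$ for $i\ge2$, and $|x|^2=N^2R^2-2x_1-1$; since these imply $|x|^2\le dx_1^2$ one obtains $x_1\gtrsim NR/\sqrt d$ and hence $|x|\le NR-c$. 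In either case $w_1(x/N)\ge c'/N$ gives $h_N(x)\ge c''/N$.

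Finally, for claim~4 the upper bound follows from $h_N\le 1$ together with $|U^N|\le cN^d$, while the lower bound follows from the fact that $h_N\equiv 1$ on the $\mathbb Z^d$-points of $NK^{2\delta}$, a set that contains a Euclidean ball of radius $N\delta$ around the origin (since $0\in K$) and hence has at least $c'N^d$ lattice points. Claim~5 is then immediate from claim~4 and (\ref{eq:TNdef}). The main obstacle I expect is the geometric analysis of the second piece of $O^N$: one must verify that the constraint that every outside neighbour lies exactly on the sphere $\partial B_{NR}$ is restrictive enough to force a dominant coordinate of $x$ to be of order $NR$, so that $|x|$ stays a full constant away from $NR$ rather than merely $O(1/N^2)$ away.
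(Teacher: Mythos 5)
Your proposal is correct and follows essentially the same route as the paper: upper/lower bounds via the sandwich $cw_1\le\widetilde h\le c'w_2$, the integrality of $|x|^2$ for claim 1, the triangle inequality for claim 2, and for claim 3 the observation that a largest coordinate of $x\in O^N$ has size $\gtrsim NR$ while its unit increment must stay in the closed ball $\overline{B}_{NR}$, forcing $|x|\le NR-c$. The only cosmetic difference is that you treat the two pieces of $O^N$ by separate case analyses, whereas the paper notes that in both cases the neighbour $x+\mathrm{sgn}(a_1)e_1$ satisfies $(|a_1|+1)^2+a_2^2+\cdots+a_d^2\le(NR)^2$ and runs a single computation.
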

\begin{proof}
We first prove claim 1. The right-hand side inequality follows by
the definition of $h_{N}$ (see (\ref{eq:hN+fdef})) and $\widetilde{h}$
(see (\ref{eq:tiltedhdef})). We now turn to the left-hand side inequality
of claim 1. For all $x\in U^{N}$, one has $|x|^{2}<(NR)^{2}$ by
the definition of $U^{N}$ (see (\ref{eq:UNdef})). Since $x$ has
integer coordinates, this implies $|x|\leq\sqrt{(NR)^{2}-1}$, hence
for all $x\in U^{N}$, 
\begin{equation}
|x|\leq NR-cN^{-1}.\label{eq:integerreason}
\end{equation}
Thus, by claim 4. of (\ref{eq:thproperty}) and (\ref{eq:w1}) one
has 
\begin{equation}
\widetilde{h}(z)\geq c'N^{-2}\mbox{ for all }|z|\leq R-\frac{c}{N^{2}}.
\end{equation}
This implies that for large $N$, for all $x\in U^{N}$, 
\begin{equation}
h_{N}(x)=\widetilde{h}(\frac{x}{N})\geq c''N^{-2}.\label{eq:hNlb}
\end{equation}

Similarly, to prove claims 2., and 3., again by claim 4. of (\ref{eq:thproperty})
and respectively (\ref{eq:w2}) and (\ref{eq:w1}), it suffices to
prove that 
\begin{equation}
|x|\geq NR-1\mbox{ , }\forall x\in\partial_{i}U^{N}\label{eq:UNib}
\end{equation}
and that
\begin{equation}
|x|\leq NR-c'\mbox{, }\forall x\in O^{N}.\label{eq:UNmSN}
\end{equation}
To prove (\ref{eq:UNib}), we observe that, if $x\in\partial_{i}U^{N}$,
there exists $y\notin U^{N}$, such that $x\sim y$. Since $|y|\geq NR$
and $|x-y|=1$, the claim (\ref{eq:UNib}) follows by triangle inequality.
Now we prove (\ref{eq:UNmSN}). We consider $x=(a_{1},\cdots,a_{d})\in O^{N}$.
By definition of $O^{N}$ (see (\ref{eq:ONdef})), $a_{1}^{2}+\cdots a_{d}^{2}\geq c(NR)^{2}$,
hence without loss of generality, we assume that $|a_{1}|\geq cNR$.
By the definition of $O^{N}$, we also know that $(|a_{1}|+1)^{2}+a_{2}^{2}+\cdots+a_{d}^{2}\leq(NR)^{2}$,
which implies that $|x|=\sqrt{a_{1}^{2}+\cdots a_{d}^{2}}\leq NR(1-c'/N)^{1/2}\leq NR-c''$,
and hence (\ref{eq:UNmSN}). 

Claim 4. follows by the observation that by claim 2. of (\ref{eq:thproperty}),
on the one hand $h_{N}\leq1$ on $\mathbb{Z}^{d}$ and $h_{N}$ is
supported on $U^{N}$, and on the other hand $h_{N}=1$ on $(NK^{\delta})\cap\mathbb{Z}^{d}$. 

Claim 5. follows as a consequence of claim 4. and the definition of
$T_{N}$, see (\ref{eq:TNdef}).\end{proof}
\begin{rem}
\label{integer}With Lemma \ref{lem:hNproperty} we reveal the reason
for choosing $R$ to be an integer: because we wish that the lattice
points are not too close to the boundary of $B_{NR}$ (see (\ref{eq:integerreason})).
This enables us to show, for example, that $h_{N}$ is not too small
on $U^{N}$, as in claim 1. of (\ref{eq:hNproperty}).
\end{rem}
\vspace{0.3cm}

Now, we introduce a non-negative martingale that plays an important
role in our construction of the tilted random walk. Given a real-valued
function $g$ on $\mathbb{Z}^{d}$, we denote its discrete Laplacian
by 
\begin{equation}
\Delta_{{\rm dis}}g(x)=\frac{1}{2d}\sum_{|e|=1}g(x+e)-g(x).
\end{equation}
For the finitely supported non-negative $f$ defined in (\ref{eq:hN+fdef}),
for all $x$ in $U^{N}$, we introduce under the measure $P_{x}$
the stochastic process 
\begin{equation}
M_{t}=\frac{f(X_{t\wedge T_{U^{N}}})}{f(x)}e^{\int_{0}^{t\wedge T_{U^{N}}}v(X_{s})ds},\: t\geq0,\quad P_{x}\mbox{-a.s.,}\label{eq:Mtdef}
\end{equation}
where 
\begin{equation}
v=-\frac{\Delta_{{\rm dis}}f}{f}.\label{eq:vdef}
\end{equation}
We define for all $T\geq0$, a non-negative measure $\widehat{P}_{x,T}$
(on $\widehat{W}_{+}$) with density $M_{T}$ with respect to $P_{x}$,
\begin{equation}
\widehat{P}_{x,T}=M_{T}P_{x}.\label{eq:tilteddef}
\end{equation}

In the next lemma we show that $\widehat{P}_{x,T}$ is the law of
a Markov chain and identify its infinitesimal generator.
\begin{lem}
For all $x\in U^{N}$, one has
\begin{equation}
\widehat{P}_{x,T}\mbox{ is the probability measure for a Markov chain up to time }T\mbox{ on }U^{N}.\label{eq:tiltproba}
\end{equation}
Its semi-group (acting on the finite dimensional space of functions
on $U^{N}$) admits a generator given by the bounded operator:\textup{
\begin{equation}
\widetilde{L}g(x)=\frac{1}{2d}\sum_{y\in U^{N},\, y\sim x}\frac{f(y)}{f(x)}(g(y)-g(x)).\label{eq:tiltedgenerator}
\end{equation}
}\end{lem}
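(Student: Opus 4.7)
The plan is to verify the statement in two steps: first, show that $M_t$ from (\ref{eq:Mtdef}) is a non-negative $P_x$-martingale with $M_0=1$, which yields that $\widehat{P}_{x,T}$ is a probability measure supported on paths confined to $U^N$ up to time $T$; second, identify the infinitesimal generator of the canonical process under $\widehat{P}_{x,T}$ as $\widetilde{L}$.

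For the first step, the martingale property of $M_t$ is an instance of the exponential Feynman--Kac formula. The key algebraic identity is $\Delta_{\mathrm{dis}} f + v f = 0$ on $U^N$, which holds by the very definition of $v$ in (\ref{eq:vdef}). Since the generator of the simple random walk on $\mathbb{Z}^d$ under $P_x$ is $\Delta_{\mathrm{dis}}$, a standard computation shows that $f(X_{t\wedge T_{U^N}})\exp\bigl(\int_0^{t\wedge T_{U^N}} v(X_s)\,ds\bigr)$ is a local martingale; finiteness of $U^N$, positivity of $f$ on $U^N$ (claim~1 of (\ref{eq:fproperty})), and the stopping at $T_{U^N}$ imply that $v$ is bounded along the relevant trajectories, promoting this to a true martingale. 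Dividing by $f(x)>0$ gives $E_x[M_T]=M_0=1$, so $\widehat{P}_{x,T}$ is a probability measure. Moreover, since $f\equiv 0$ off $U^N$, one has $M_t\equiv 0$ on $\{T_{U^N}\le t\}$, so under $\widehat{P}_{x,T}$ the walk remains in $U^N$ throughout $[0,T]$.

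For the second step, I would apply the It\^o/product rule to $M_t\,g(X_{t\wedge T_{U^N}})$ for an arbitrary bounded function $g$ on $U^N$ (extended by $0$ off $U^N$), using the semimartingale decompositions of $f(X_t)$ and $g(X_t)$ under $P_x$. A direct computation shows that the finite-variation part equals $M_t\bigl[\tfrac{\Delta_{\mathrm{dis}}(fg)}{f}+vg\bigr](X_t)\,dt$, and a short algebraic manipulation (expanding and using the definition of $v$) reduces the bracket to $\frac{1}{2d}\sum_{y\sim x}\frac{f(y)}{f(x)}(g(y)-g(x))$. The summand vanishes whenever $y\notin U^N$ since $f(y)=0$, so this is exactly $\widetilde{L}g(x)$ as in (\ref{eq:tiltedgenerator}). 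Transferring this identity across the change of measure shows that $g(X_t)-g(X_0)-\int_0^t \widetilde{L}g(X_s)\,ds$ is a $\widehat{P}_{x,T}$-martingale for $t\le T$, which gives the Markov property with generator $\widetilde{L}$. Boundedness of $\widetilde{L}$ on the finite-dimensional space of functions on $U^N$ is immediate from $|U^N|<\infty$ combined with $f>0$ there.

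The principal subtlety is handling the stopping at $T_{U^N}$: one must justify the upgrade of the local martingale to a genuine martingale (straightforward because $M$ vanishes after exit and $f,v$ are bounded on $U^N$) and must carefully exploit the cancellations $f(y)=0$ for $y\sim x$ with $y\notin U^N$ when simplifying the It\^o expression. These boundary effects are precisely what confines the tilted walk to $U^N$ and explains why the sum in (\ref{eq:tiltedgenerator}) is restricted to $y\in U^N$.
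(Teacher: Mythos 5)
Your proposal is correct and rests on the same underlying mechanism as the paper: the Feynman--Kac identity $\Delta_{\mathrm{dis}}f+vf=0$ makes $M_t$ a martingale, $E_x[M_T]=M_0=1$ gives the probability statement, the vanishing of $f$ outside $U^N$ confines the paths, and the generator is read off from the $h$-transform structure. The differences are in execution. For the martingale step, the paper does not argue via localization; it regularizes, setting $f^{(\zeta)}=f+\zeta$ and $v^{(\zeta)}=-\Delta_{\mathrm{dis}}f/f^{(\zeta)}$, so that the cited martingale lemma of Ethier--Kurtz applies directly (note that $v$ itself is undefined off $U^N$ where $f=0$, which is exactly the issue your ``bounded along the relevant trajectories'' remark glosses over; the regularization sidesteps it cleanly before passing to the limit by dominated convergence). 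For the generator, the paper simply invokes a theorem of Demuth--van Casteren identifying the generator of an $h$-transformed semigroup, whereas you carry out the It\^o/product-rule computation and appeal to well-posedness of the martingale problem; your algebra reducing $\frac{\Delta_{\mathrm{dis}}(fg)}{f}+vg$ to $\frac{1}{2d}\sum_{y\sim x}\frac{f(y)}{f(x)}(g(y)-g(x))$ is correct, and the restriction of the sum to $y\in U^N$ via $f(y)=0$ is exactly as in the paper. Your route is more self-contained; the paper's is shorter and avoids the delicate point of defining $v$ at the exit time.
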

\begin{proof}
To prove the claims (\ref{eq:tiltproba}) and (\ref{eq:tiltedgenerator})
we first prove that 
\begin{equation}
M_{t}\mbox{ is an }({\cal F}_{t})\mbox{-martingale under }P_{x}.\label{eq:Mtmart}
\end{equation}
For $\zeta\in(0,1)$, we define $f^{(\zeta)}=f+\zeta$ and $v^{(\zeta)}=-\frac{\Delta_{{\rm dis}}f^{(\zeta)}}{f^{(\zeta)}}=-\frac{\Delta_{{\rm dis}}f}{f^{(\zeta)}}$.
We denote by $M_{t}^{(\zeta)}$, $t\geq0$, the stochastic process
similarly defined as $M_{t}$ in (\ref{eq:Mtdef}) by
\begin{equation}
M_{t}^{(\zeta)}=\frac{f^{(\zeta)}(X_{t\wedge T_{U^{N}}})}{f^{(\zeta)}(x)}e^{\int_{0}^{t\wedge T_{U^{N}}}v^{(\zeta)}(X_{s})ds},\: t\geq0,\quad P_{x}\mbox{-a.s.}
\end{equation}
By Lemma 3.2 in Chapter 4, p. 174 of \cite{EthierKurtz}, $M_{t}^{(\zeta)}$
is an $(\mathcal{F}_{t})$-martingale under $P_{x}$. Since $N$ is
fixed, $f^{(\zeta)}$ is uniformly for $\zeta\in(0,1)$ bounded from
above and below on $U^{N}$ , $v^{(\zeta)}$ is uniformly in $\zeta$
bounded on $U^{N}$ as well. Hence, for all $t\geq0$, $M_{t}^{(\zeta)}$
is bounded above uniformly for all $\zeta\in(0,1)$. Therefore the
claim (\ref{eq:Mtmart}) follows from the dominated convergence theorem
since for all $x\in U^{N}$, $P_{x}$-a.s., $\lim_{\zeta\to0}M_{t}^{(\zeta)}=M_{t}$.
To prove the claim (\ref{eq:tiltproba}), we just note that 
\begin{equation}
E_{x}[M_{T}]=M_{0}=1.
\end{equation}
Moreover for all $x$ in $U^{N}$ by claim 1. of (\ref{eq:fproperty})
\begin{equation}
f(X_{T_{U^{N}}})=0,
\end{equation}
thus $\widehat{P}_{x,T}$ vanishes on all paths which exit $U^{N}$
before $T_{N}$. Then, the claim (\ref{eq:tiltedgenerator}) follows
by Theorem 2.5, p. 61 of \cite{DemCast}.
\end{proof}
We then denote the law of the ``tilted random walk'' by 
\begin{equation}
\widetilde{P}_{N}=\widehat{P}_{0,T_{N}}.\label{eq:PNdef}
\end{equation}

\begin{rem}
Intuitively speaking, $\widetilde{P}_{N}$ is the law of a tilted
random walk, which restrains itself up to time $T_{N}$ from exiting
$U^{N}$ and then, after the deterministic time $T_{N}$, continues
as the simple random walk. It is absolutely continuous with respect
to $P_{0}$. 
\end{rem}
It is convenient for us to define $\{\overline{P}_{x}\}_{x\in U^{N}}$,
a family of finite-space Markov chains on $U^{N}$, with generator
$\widetilde{L}$ defined in (\ref{eq:tiltedgenerator}). We will call
this Markov chain ``the confined walk'', since it is supported on
$\Gamma(U^{N})$ (see below (\ref{eq:filtration}) for the definition).
We will also tacitly regard it as a Markov chain on $\mathbb{Z}^{d}$,
when no ambiguity rises. We denote by $\overline{E}_{x}$ the expectation
with respect to $\overline{P}_{x}$, for all $x\in U^{N}$.

Thus the following corollary is immediate.
\begin{cor}
\label{cor:coincide}
\begin{equation}
\mbox{Up to time }T_{N},\ \widetilde{P}_{N}\mbox{ coincides with }\overline{P}_{0}.\label{eq:coincide}
\end{equation}
\end{cor}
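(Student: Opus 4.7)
My plan is to derive Corollary \ref{cor:coincide} as an immediate consequence of the previous lemma. That lemma established two things about $\widehat{P}_{0,T_N}$: first, that it is a probability measure under which the coordinate process is a Markov chain on $U^N$ up to time $T_N$ (equation \eqref{eq:tiltproba}), and second, that the infinitesimal generator of this Markov chain is the bounded operator $\widetilde{L}$ defined in \eqref{eq:tiltedgenerator}. The confined walk $\overline{P}_0$ is, by definition, the law of the Markov chain on $U^N$ starting from $0$ with generator $\widetilde{L}$. So the conclusion is essentially a tautology once we invoke uniqueness.

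The key observation to make this into a proof is that the generator uniquely determines the law of a continuous-time Markov chain on a finite state space. The bounded operator $\widetilde{L}$ on the finite-dimensional space of functions on $U^N$ admits the unique semi-group $e^{t\widetilde{L}}$, and the finite-dimensional distributions of the process are determined through Chapman-Kolmogorov from this semi-group together with the initial distribution. Since both $\widetilde{P}_N$ and $\overline{P}_0$ start at $0$ and evolve according to $\widetilde{L}$ up to time $T_N$, they induce the same finite-dimensional distributions on the vectors $(X_{t_1},\ldots,X_{t_k})$ for $0\leq t_1<\cdots<t_k\leq T_N$, hence by Kolmogorov's extension theorem they agree as path laws up to time $T_N$.

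The only subtle point I expect is a bookkeeping issue rather than a genuine obstacle: $\widetilde{P}_N$ is defined as a measure on $\widehat{W}_+$, whereas $\overline{P}_0$ is a measure on $\Gamma(U^N)$. So the statement \eqref{eq:coincide} must be read as the equality of the laws of $(X_t)_{t\in [0,T_N]}$ under the two measures, after identifying both with path measures on $\Gamma(U^N)$ restricted to $[0,T_N]$. This identification is legitimate because, as already observed in the proof of the preceding lemma, $f(X_{T_{U^N}}) = 0$ forces $\widetilde{P}_N$ to vanish on paths that exit $U^N$ before time $T_N$, so $\widetilde{P}_N$-almost surely $X_t \in U^N$ for all $t\in [0,T_N]$. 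With this identification in hand, the uniqueness argument above yields the claim in a single line.
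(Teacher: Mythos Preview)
Your proposal is correct and follows essentially the same approach as the paper's one-line proof, which reads: ``It suffices to identify the finite time marginals of the two measures with the help of the Markov property and (\ref{eq:tiltedgenerator}).'' Your version spells out in more detail what this means (uniqueness of the semi-group generated by the bounded operator $\widetilde{L}$ on a finite state space, hence equality of finite-dimensional distributions via Chapman--Kolmogorov) and also makes explicit the path-space identification between $\widehat{W}_+$ and $\Gamma(U^N)$, which the paper leaves tacit.
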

\begin{proof}
It suffices to identify the finite time marginals of the two measures
with the help of the Markov property and (\ref{eq:tiltedgenerator}). \end{proof}
\begin{rem}
Since the confined walk is time-homogenous, in Sections 3, 4 and 5
we will actually perform the analysis on the confined walk instead
of the tilted walk, and transfer the result concerning the time period
$[0,T_{N})$ back to the the tilted walk thanks to the above corollary.
See for instance (\ref{eq:transfertoconfine}).
\end{rem}
We now state and prove some basic estimates about the confined walk.
\begin{lem}
\label{lem:pippy}One has 
\begin{equation}
\begin{cases}
1. & \mbox{The measure }\mbox{\ensuremath{\pi(x)=f^{2}(x),}\ \ensuremath{x\in U^{N}}, is a reversible measure for }\\
 & \mbox{the (irreducible) confined walk }\{\overline{P}_{x}\}_{x\in U^{N}};\\
2. & \mbox{The Dirichlet form associated with }\{\overline{P}_{x}\}_{x\in U^{N}}\mbox{ and }\pi\mbox{ is }\\
 & \overline{\mathcal{E}}(g,g)=(-\widetilde{L}g,g)_{l^{2}(\pi)}=\frac{1}{2}\sum_{x,y\in U^{N},x\sim y}\frac{f(x)f(y)}{2d}(g(x)-g(y))^{2}\mbox{ with }g:U^{N}\to\mathbb{R}^{+};\\
3. & \mbox{If }x,y\in U^{N},\ |x|\leq|y|\mbox{, then one has }h_{N}(x)\geq ch_{N}(y)\mbox{ and }\pi(x)\geq c'\pi(y);\\
4. & \mbox{For all }x\in U^{N},\ cN^{-d-4}\leq\pi(x)=f^{2}(x)\leq c'N^{-d}.
\end{cases}\label{eq:cfwalkproperty}
\end{equation}
\end{lem}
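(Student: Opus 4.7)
The plan is to verify each of the four claims by direct computation, leaning heavily on Lemma~\ref{lem:hNproperty} and the properties of $\widetilde{h}$ collected in (\ref{eq:thproperty}).

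For claim 1., the chain is irreducible because $U^{N}$ is connected as a subset of $\mathbb{Z}^{d}$, $f>0$ on $U^{N}$ by claim 1.\ of (\ref{eq:fproperty}), and the off-diagonal entries of $\widetilde{L}$ along any nearest-neighbour edge inside $U^{N}$ are strictly positive. For reversibility I would verify the detailed balance: for $x,y\in U^{N}$ with $x\sim y$, the rate $\widetilde{L}_{x,y}=\tfrac{1}{2d}f(y)/f(x)$ gives
\begin{equation}
\pi(x)\widetilde{L}_{x,y}=f^{2}(x)\cdot\tfrac{1}{2d}\tfrac{f(y)}{f(x)}=\tfrac{f(x)f(y)}{2d}=\pi(y)\widetilde{L}_{y,x},
\end{equation}
which is symmetric in $x,y$, and the claim follows.

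For claim 2., I would write out $(-\widetilde{L}g,g)_{l^{2}(\pi)}$ using (\ref{eq:tiltedgenerator}), symmetrise by swapping $x\leftrightarrow y$ in the sum over ordered neighbouring pairs in $U^{N}$, and use the identity $(g(y)-g(x))g(x)+(g(x)-g(y))g(y)=-(g(x)-g(y))^{2}$. Combined with the symmetric weight $f(x)f(y)$ provided by claim 1., this yields the stated Dirichlet form.

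Claim 3.\ is a direct translation of claim 5.\ of (\ref{eq:thproperty}) to the discrete setting. Indeed, for $x,y\in U^{N}$ one has $|x/N|,|y/N|<R$, so applying the continuous comparison to $z_{1}=x/N$, $z_{2}=y/N$ yields $h_{N}(x)=\widetilde{h}(x/N)\ge c\,\widetilde{h}(y/N)=ch_{N}(y)$; squaring and dividing by $\|h_{N}\|_{2}^{2}$ gives the corresponding inequality for $\pi$.

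Finally, claim 4.\ follows by combining the pointwise bounds on $h_{N}$ with the $L^{2}$-norm estimates. For the upper bound, $h_{N}\le 1$ by claim 2.\ of (\ref{eq:thproperty}) and $\|h_{N}\|_{2}^{2}\ge c N^{d}$ by claim 4.\ of (\ref{eq:hNproperty}), giving $\pi(x)\le c' N^{-d}$. For the lower bound, $h_{N}(x)\ge cN^{-2}$ on $U^{N}$ by claim 1.\ of (\ref{eq:hNproperty}) and $\|h_{N}\|_{2}^{2}\le c''N^{d}$ by claim 4.\ of the same lemma, yielding $\pi(x)\ge c'''N^{-d-4}$. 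Since all four claims reduce to invocations of already-established facts plus a short computation, I do not expect any genuine obstacle; the main point of care is merely the bookkeeping of ordered versus unordered edges in the Dirichlet form computation.
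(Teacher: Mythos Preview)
Your proposal is correct and follows essentially the same route as the paper: the paper's proof is a terse list of pointers (claim~1 via self-adjointness of $\widetilde{L}$ in $l^{2}(\pi)$, claim~2 from claim~1 and (\ref{eq:tiltedgenerator}), claim~3 from claim~5 of (\ref{eq:thproperty}), claim~4 from claims~1 and~4 of (\ref{eq:hNproperty})), and you have simply spelled out the underlying computations that those pointers summarise.
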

\begin{proof}
Claim 1. follows from claims 1.  and 2.  of (\ref{eq:fproperty})
and the observation that by (\ref{eq:tiltedgenerator}) $\widetilde{L}$
is self-adjoint in $l^{2}(\pi)$. Claim 2. follows from claim 1. and
(\ref{eq:tiltedgenerator}). Claim 3. follows from (\ref{eq:hN+fdef})
and claim 5. of (\ref{eq:thproperty}). Claim 4. follows from claims
1. and 4. of (\ref{eq:hNproperty}) and the definition of $f$ (see
(\ref{eq:hN+fdef})).
\end{proof}
In the next lemma we control the fluctuation of $v$ with a rough
lower bound and a more refined upper bound.
\begin{lem}
\label{lem:vbound}One has (recall $v$ is defined in (\ref{eq:vdef})),
for all $x$ in $U^{N}$,
\begin{equation}
-cN^{2}\leq v(x)\leq c'N^{-2}.\label{eq:vub}
\end{equation}
\end{lem}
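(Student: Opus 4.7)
Since $f=h_N/\|h_N\|_2$ by (\ref{eq:hN+fdef}), the normalization cancels in $v$, so $v(x) = -\Delta_{\rm dis} h_N(x)/h_N(x)$. The lower bound $v(x)\geq -cN^{2}$ is crude: from claim 2 of (\ref{eq:thproperty}), $0\leq h_N\leq 1$, hence $|\Delta_{\rm dis} h_N(x)|\leq 2$, and combining this with the lower bound $h_N(x)\geq cN^{-2}$ from claim 1 of Lemma~\ref{lem:hNproperty} yields $v(x)\geq -c'N^{2}$.

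For the refined upper bound $v(x)\leq c'N^{-2}$, the plan is to exploit a fourth-order Taylor expansion of $\widetilde{h}$. Because the nearest-neighbor set $\{\pm e_i\}_{i=1}^d$ is antipodally symmetric, the first- and third-order terms vanish upon summation, leaving
\begin{equation*}
\Delta_{\rm dis} h_N(x) \;=\; \frac{1}{2dN^{2}}\,\Delta\widetilde{h}(x/N) \,+\, O\!\left(\|\widetilde{h}\|_{C^{4}}\,N^{-4}\right)
\end{equation*}
whenever $\widetilde{h}$ is $C^{4}$ in a uniform neighborhood of $x/N$. Two easy sub-cases follow: if $|x|\leq NR/2$, then $\widetilde{h}\geq c>0$ on $B_{R/2}$ (by claim 4 of (\ref{eq:thproperty}) together with the explicit formula (\ref{eq:w1}) for $w_1$) and $\widetilde{h}\in C^{\infty}(B_{R})$, so $|v(x)|=O(N^{-2})$; if $|x|>NR/2$ and all $2d$ nearest-neighbors of $x$ lie in $U^{N}$, then $\widetilde{h}$ is harmonic at $x/N$ by claim 1 of (\ref{eq:thproperty}), hence $\Delta\widetilde{h}(x/N)=0$ and $|\Delta_{\rm dis} h_N(x)|=O(N^{-4})$, which combined with $h_N(x)\geq cN^{-2}$ yields $v(x)\leq c'N^{-2}$.

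The main obstacle is the boundary case $x\in\partial_i U^{N}$: some neighbors $y\notin U^{N}$ contribute $h_N(y)=0$, and the Taylor expansion above breaks down because $\widetilde{h}$ extended by zero is not $C^{4}$ across $\partial B_{(R)}$. My plan to circumvent this is to introduce the Kelvin reflection
\begin{equation*}
\overline{h}(z) \;=\; \begin{cases}\widetilde{h}(z), & |z|\leq R,\\ -\left(R/|z|\right)^{d-2}\,\widetilde{h}\!\left(R^{2}z/|z|^{2}\right), & |z|>R,\end{cases}
\end{equation*}
which, because $\widetilde{h}$ is harmonic on $B_{(R)}\setminus B_{R/2}$ and vanishes on $\partial B_{(R)}$, is harmonic on the annulus $\{R/2<|z|<2R\}$; moreover by Hopf's lemma applied to $\widetilde{h}$ (positive inside, zero on $\partial B_{(R)}$), the outward normal derivative of $\widetilde{h}$ on $\partial B_{(R)}$ is strictly negative, so $\overline{h}(z)\leq 0$ for $|z|>R$ close to $R$. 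Since $h_N(y)=\overline{h}(y/N)$ for $y\in U^{N}$ and $h_N(y)=0\geq\overline{h}(y/N)$ for $y\notin U^{N}$, one obtains
\begin{equation*}
\Delta_{\rm dis} h_N(x) - \Delta_{\rm dis}[\overline{h}(\cdot/N)](x) \;=\; -\frac{1}{2d}\sum_{y\sim x,\,y\notin U^{N}}\overline{h}(y/N) \;\geq\; 0,
\end{equation*}
while the Taylor expansion, now valid since $\overline{h}$ is harmonic with bounded $C^{4}$ norm on a fixed annulus around $\partial B_{(R)}$, yields $\Delta_{\rm dis}[\overline{h}(\cdot/N)](x)=O(N^{-4})$. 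Hence $-\Delta_{\rm dis} h_N(x)\leq cN^{-4}$, and dividing by $h_N(x)\geq c'N^{-2}$ gives $v(x)\leq c''N^{-2}$. The Kelvin-reflection step is the key technical ingredient; the remaining estimates reduce to standard Taylor expansion combined with the size bounds in Lemma~\ref{lem:hNproperty}.
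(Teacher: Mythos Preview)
Your argument is correct, and the Kelvin reflection is a genuinely different device from the paper's treatment of the boundary.

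The paper's proof splits $U^{N}$ into $I^{N}=B_{NR/2}\cap\mathbb{Z}^{d}$, the set $O^{N}$ of (\ref{eq:ONdef}), and $S^{N}=\partial_{i}U^{N}\setminus O^{N}$. On $I^{N}$ it argues as you do. On $O^{N}$ it uses only a second-order Taylor remainder $O(N^{-3})$ but compensates with the sharper bound $h_{N}\geq cN^{-1}$ (claim~3 of (\ref{eq:hNproperty})). On $S^{N}$ it invokes the Gilbarg--Trudinger $C^{3}$ extension lemma to produce a smooth extension $w$ of $\widetilde{h}$ beyond $B_{R}$, then uses (\ref{eq:outer}) (essentially Hopf, as you note) to show that the ``missing'' contribution $\sum_{y\sim x,\,y\notin U^{N}}w(y/N)$ is $\leq -cN^{-2}$ and dominates the $O(N^{-3})$ Taylor error, forcing $v(x)<0$ there.

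Your route trades the refined $O^{N}$-bound $h_{N}\geq cN^{-1}$ for a fourth-order Taylor expansion (cancellation of odd orders giving $O(N^{-4})$), so that the universal bound $h_{N}\geq cN^{-2}$ suffices throughout. For the boundary you replace the $C^{3}$ extension by the Kelvin--Schwarz reflection $\overline{h}$, which is exactly harmonic on a fixed annulus and hence has uniform $C^{4}$ control there by interior estimates; the sign $\overline{h}\leq 0$ outside $B_{R}$ follows simply from $\widetilde{h}\geq 0$ (Hopf is not actually needed for the inequality you use). This unifies $O^{N}$ and $S^{N}$ into a single case and avoids the extension lemma entirely, at the cost of importing the Kelvin transform. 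Both arguments ultimately rely on the strict sign of the normal derivative at $\partial B_{R}$, but yours packages it more cleanly.
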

\begin{proof}
We first record an identity for later use: 
\begin{equation}
v(x)\overset{(\ref{eq:vdef})}{=}-\frac{\Delta_{{\rm dis}}f(x)}{f(x)}\overset{(\ref{eq:hN+fdef})}{=}-\frac{\Delta_{{\rm dis}}h_{N}(x)}{h_{N}(x)}.\label{eq:vcalc-1}
\end{equation}
The inequality on the left-hand side of (\ref{eq:vub}) is very rough
and follows from
\begin{equation}
v\overset{(\ref{eq:vcalc-1})}{\underset{h_{N}\geq0}{\geq}}-\frac{\max_{x\in U^{N}}h_{N}(x)}{\min_{x\in U^{N}}h_{N}(x)}\overset{(\ref{eq:hNproperty})\,1.}{\geq}-cN^{2}.
\end{equation}
Next we prove the inequality on the right-hand side of (\ref{eq:vub}).
We split $U^{N}$ into three parts and call them by $I^{N}$, $O^{N}$,
and $S^{N}$ respectively. Before we go into details, we describe
roughly the division, and what it entails. The region $I^{N}=B_{NR/2}\cap\mathbb{Z}^{d}$
is the ``inner part'' of $U^{N}$; the region $O^{N}$ that already
appears in (\ref{eq:ONdef}) is the ``outer part'' of $U^{N}$ that
does not feel the push of the ``hard'' boundary, that is, all neighbours
of its points belong to $B_{NR}$; the region $S^{N}=\partial_{i}U^{N}\backslash O^{N}$
is a subset of the inner boundary of $U^{N}$, where all points have
a least one neighbour outside $B_{NR}\cap\mathbb{Z}^{d}$ and thus
``feel the hard push'' from outside $U^{N}$. As we will later see,
in the microscopic region that corresponds to $I^{N}$, $\widetilde{h}$
is a smooth function; in the region $O^{N}$, $h_{N}$ is at least
of order $N^{-1}$ and $\big|\Delta_{{\rm dis}}h_{N}\big|$ is at
most of order $N^{-3}$; in the region $S^{N}$, one has $\Delta_{{\rm dis}}h_{N}>0$.

We first record an estimate. Using a Taylor formula at second order
with Lagrange remainder (see Theorem 5.16, pp. 110-111 of \cite{Rudin}),
since for all $x\in U^{N}\backslash S^{N}$, all $y$ adjacent to
$x$ belongs to $B_{NR}$, we know from (\ref{eq:hN+fdef}) that 
\begin{equation}
\Delta_{\mathrm{dis}}h_{N}(x)\geq\frac{1}{N^{2}}(\frac{1}{2d}\Delta\widetilde{h}(\frac{x}{N})-cN^{-1})\mbox{ for all }x\in U^{N}\backslash S^{N}.\label{eq:DeltadishNinf}
\end{equation}

We first treat points in $I^{N}=B_{NR/2}\cap\mathbb{Z}^{d}$. On $B_{R/2}$,
we know that $\widetilde{h}\geq c$ and $\widetilde{h}$ is $C^{\infty}$
by claim 1. of (\ref{eq:thproperty}). We thus obtain that for all
$x$ in $I^{N}$,
\begin{equation}
-\frac{\Delta_{{\rm dis}}h_{N}(x)}{h_{N}(x)}\overset{(\ref{eq:hN+fdef})}{\underset{(\ref{eq:DeltadishNinf})}{\leq}}-\frac{\Delta\widetilde{h}(\frac{x}{N})-cN^{-1}}{\widetilde{h}(\frac{x}{N})N^{2}}\leq cN^{-2}.\label{eq:INub1}
\end{equation}

We then recall that $O^{N}=\big\{ U^{N}\backslash\big(\partial_{i}U^{N}\cup B_{NR/2}\big)\big\}\bigcup\big\{ x\in\partial_{i}U^{N};\,|y|=NR\mbox{ for all }y\sim x,\, y\notin U^{N}\big\}$,
as defined in (\ref{eq:ONdef}). By claim 1. of (\ref{eq:thproperty}),
we know that for all $x\in O^{N}$, $\Delta\widetilde{h}(\frac{x}{N})=0$.
Hence we find that 
\begin{equation}
v(x)\overset{(\ref{eq:DeltadishNinf})}{\leq}-\frac{\Delta\widetilde{h}(\frac{x}{N})-cN^{-1}}{h_{N}(x)N^{2}}\overset{(\ref{eq:hNproperty})\,3.}{\leq}\frac{cN^{-1}}{c'N^{-1}\cdot N^{2}}=c''N^{-2}\mbox{ for all }x\in O^{N}.\label{eq:ONub-1}
\end{equation}

We finally treat points in $S^{N}=\partial_{i}U^{N}\backslash O^{N}$.
By Lemma 6.37, p. 136 of \cite{GilTru}, $\widetilde{h}$ can be extended
to a $C^{3}$ function $w$ on $B_{(R+1)}$ such that $w=\widetilde{h}$
in $B_{R}$ and all the derivatives of $w$ up to order three are
uniformly bounded in $B_{(R+1)}$. Hence, we have for all $x\in S^{N}$,
\begin{equation}
-\Delta_{{\rm dis}}h_{N}(x)=\big(w(\frac{x}{N})-\frac{1}{2d}\sum_{y\sim x}w(\frac{y}{N})\big)+\frac{1}{2d}\sum_{y\sim x,y\notin U^{N}}(w(\frac{y}{N})-\widetilde{h}(\frac{y}{N})))={\rm I}+{\rm II}.\label{eq:SNcalc1}
\end{equation}
On the one hand, by a second-order Taylor expansion with Lagrange
remainder, and since $\Delta w=0$ in $B_{R}\backslash B_{R/2}$,
we have 
\begin{equation}
{\rm I}\leq\frac{1}{N^{2}}(\frac{1}{2d}\Delta w(\frac{x}{N})+\frac{c}{N})=\frac{c'}{N^{3}}\ \mbox{ for }x\in S^{N}.\label{eq:SNcalc2}
\end{equation}
On the other hand, we know that by claim 2. of (\ref{eq:thproperty})
\begin{equation}
\widetilde{h}(\frac{y}{N})=0\mbox{ for all }y\notin U^{N}.\label{eq:outside0}
\end{equation}
Moreover, by definition of $S^{N}$, there exists a point $y$ in
$\mathbb{Z}^{d}$, adjacent to $x$, such that $NR<|y|\leq NR+1$.
This implies that 
\begin{equation}
(NR+1)^{2}\geq|y|^{2}\geq(NR)^{2}+1\mbox{, and hence }R+\frac{1}{N}\geq\frac{|y|}{N}\geq R+c'N^{-2}.
\end{equation}
By claim 4. of (\ref{eq:thproperty}), since $\widetilde{h}$ is bounded
from above and below by two functions having (constant) negative outer
normal derivatives on $\partial B_{R}$, we find that 
\begin{equation}
\frac{\partial\widetilde{h}}{\partial n}(z)<-c\mbox{ uniformly for all }z\in\partial B_{R},\label{eq:outer}
\end{equation}
where $\frac{\partial\widetilde{h}}{\partial n}$ denotes the outer
normal derivative of $\widetilde{h}$. Thus we find that for large
$N$, 
\begin{equation}
w(\frac{y}{N})\leq-\overline{c}N^{-2}.
\end{equation}
This implies that
\begin{equation}
{\rm II}\overset{(\ref{eq:outside0})}{=}\frac{1}{2d}\sum_{y\sim x,y\notin U^{N}}w(\frac{y}{N})\leq-c''N^{-2}.\label{eq:SNcalc3}
\end{equation}
Combining (\ref{eq:SNcalc2}) and (\ref{eq:SNcalc3}), it follows
that for large $N$ and all $x\in S^{N}$, 
\begin{equation}
v(x)\overset{(\ref{eq:vcalc-1})}{=}-\frac{\Delta_{{\rm dis}}h_{N}(x)}{h_{N}(x)}\overset{(\ref{eq:SNcalc1}),(\ref{eq:SNcalc2})}{\underset{(\ref{eq:SNcalc3})}{\leq}}\frac{cN^{-3}-c''N^{-2}}{h_{N}(x)}<0.\label{eq:SNub}
\end{equation}

Since $I^{N}$, $O^{N}$ and $S^{N}$ form a partition of $U^{N}$,
the inequality in the right-hand side of (\ref{eq:vub}) follows by
collecting (\ref{eq:INub1}), (\ref{eq:ONub-1}) and (\ref{eq:SNub}).
\end{proof}
\vspace{0.3cm}

We will now derive a lower bound for the spectral gap of the confined
walk, which we denote by $\overline{\lambda}$. We use the method
introduced at the end of Section 1 and derive an upper bound for the
quantity $A$ (recall that $A$ is defined in (\ref{eq:Adef})). However,
we first need to specify our choice of paths $\gamma.$ For $x=(x_{1},\ldots,x_{d}),\, y=(y_{1},\ldots,y_{d})\in U^{N}$,
we assume, without loss of generality, that for some $l\in\{0,\ldots,d\}$
we have 
\begin{equation}
\begin{cases}
|x_{i}|\geq|y_{i}| & \mbox{ for }i=1,\ldots,l\\
|x_{i}|<|y_{i}| & \mbox{ for }i=l+1,\ldots,d,
\end{cases}\label{eq:xyrelation}
\end{equation}
($l=0$ means that $|x_{i}|<|y_{i}|$ for all $i=1,\ldots,d$, and
$l=d$ means that $|x_{i}|\geq|y_{i}|$ for all $i=1,\ldots d$).
For $p,q\in\mathbb{Z}^{d}$, which differ only in one coordinate,
we denote by $\beta(p,q)$ the straight (and shortest) path between
them. Then $\gamma(x,y)$ is defined as follows: 
\begin{equation}
\begin{array}{c}
\gamma(x,y)=\mbox{ the concatenation of the paths}\\
\beta\big((y_{1},\ldots y_{i-1},x_{i},\ldots,x_{d}),(y_{1},\ldots y_{i},x_{i+1},\ldots,x_{d})\big)\mbox{ as }i\mbox{ goes from }1\mbox{ to }d.
\end{array}\label{eq:gammadef}
\end{equation}
Loosely speaking, $\gamma(x,y)$ successively ``adjusts'' each coordinate
of $x$ with the corresponding coordinate of $y$ by first ``decreasing''
the coordinates where $|x_{i}|$ is bigger or equal to $|y_{i}|$
and then ``increasing'' the coordinates where $|y_{i}|$ is bigger
than $|x_{i}|$. It is easy to check that this path lies entirely
in $U^{N}$, since by (\ref{eq:xyrelation}), for all $\{p,q\}\in\gamma(x,y)$, one has 
\begin{equation}
\max(|p|,|q|)\leq\max(|x|,|y|).\label{eq:maxpqxy}
\end{equation}

\begin{prop}
\label{prop:spectralgap}One has, 
\begin{equation}
\overline{\lambda}\geq cN^{-2}.\label{eq:lbsg}
\end{equation}
\end{prop}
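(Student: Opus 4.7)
The plan is to apply the canonical path method (\ref{eq:pathmethod}) with the paths $\gamma(x,y)$ defined in (\ref{eq:gammadef}) and show that the quantity $A$ defined in (\ref{eq:Adef}) satisfies $A \leq cN^2$, which immediately yields $\overline{\lambda} \geq c'N^{-2}$. The proof combines three ingredients: a lower bound on the edge weight $W(e)$, a uniform bound on the length of $\gamma(x,y)$, and a counting estimate for the number of pairs $(x,y)$ whose path $\gamma(x,y)$ passes through a given edge $e$.

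For the edge weights, (\ref{eq:tiltedgenerator}) and (\ref{eq:edgedef}) give $W(e) = f(p)f(q)/(2d)$ for $e = \{p,q\} \in E$. Assuming without loss of generality that $|p| \leq |q|$, claim 3 of Lemma \ref{lem:pippy} yields $f(p) \geq c f(q)$ and hence $W(e) \geq c\pi(q)/(2d)$. For the path length, since every coordinate of $x$ and $y$ is bounded in absolute value by $NR$, $\mathrm{leng}(\gamma(x,y)) \leq \sum_{j=1}^{d} |x_j - y_j| \leq 2dNR \leq cN$.

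For the counting, I would fix an edge $e = \{p,q\}$ in direction $i$ and observe from the construction of $\gamma$ in (\ref{eq:gammadef}) that the state reached immediately before traversing $e$ is $p$, with the property that for each $j \neq i$, $p_j$ equals either $x_j$ or $y_j$ according to whether coordinate $j$ has been processed by $\gamma$ before coordinate $i$. Summing over the at most $2^{d-1}$ possible partitions of $\{1,\ldots,d\}\setminus\{i\}$ into processed/unprocessed coordinates (consistent with the decrease-first rule), every remaining free coordinate of $x$ or $y$ takes $O(N)$ integer values, while the pair $(x_i,y_i)$—constrained only to straddle $p_i$ in the appropriate direction—takes $O(N^2)$ values, so the total number of such pairs is at most $CN^{d+1}$. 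Combining this with (\ref{eq:maxpqxy}), which gives $\max(|x|,|y|) \geq |q|$ and hence, by claim 3 of Lemma \ref{lem:pippy}, at least one of $\pi(x),\pi(y)$ is bounded by $C\pi(q)$, together with the uniform bound $\pi \leq c'N^{-d}$ of claim 4 of Lemma \ref{lem:pippy}, one obtains $\pi(x)\pi(y) \leq C'N^{-d}\pi(q)$. Putting these together, for every edge $e$,
\begin{equation*}
\frac{1}{W(e)} \sum_{(x,y):\gamma(x,y)\ni e} \mathrm{leng}(\gamma(x,y))\,\pi(x)\pi(y) \;\leq\; \frac{cN \cdot CN^{d+1} \cdot C'N^{-d}\pi(q)}{c\pi(q)/(2d)} \;\leq\; c''N^2,
\end{equation*}
so that $A \leq c''N^2$ and (\ref{eq:pathmethod}) concludes.

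The main obstacle will be the last step: the stationary measure $\pi$ is massively non-uniform (by claim 4 of Lemma \ref{lem:pippy}, the ratio $\pi_{\max}/\pi_{\min}$ can be as large as $N^4$), so a naive bound would inflate $A$ by a large power of $N$. The delicate point is to cancel the factor $\pi(q)$ that appears in the lower bound of $W(e)$ against a matching factor in the upper bound on $\pi(x)\pi(y)$, which is made possible precisely by the radial monotonicity of $\pi$ (claim 3 of Lemma \ref{lem:pippy}) and by the fact that the canonical paths in (\ref{eq:gammadef}) stay within $\{z : |z| \leq \max(|x|,|y|)\}$, as recorded in (\ref{eq:maxpqxy}).
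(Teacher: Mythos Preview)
Your proposal is correct and follows essentially the same approach as the paper: apply the canonical path method with the paths of (\ref{eq:gammadef}), use the length bound $\mathrm{leng}(\gamma)\le cN$, the counting bound $\#\{(x,y):e\in\gamma(x,y)\}\le cN^{d+1}$, and the radial quasi-monotonicity of $\pi$ together with (\ref{eq:maxpqxy}) to cancel the non-uniformity of $\pi$ against $W(e)$. The only cosmetic difference is that the paper writes the cancellation as $W(e)\ge c\min(\pi(x),\pi(y))$, so that $\pi(x)\pi(y)/W(e)\le c'\max(\pi(x),\pi(y))\le c''N^{-d}$, whereas you route it through $W(e)\ge c\pi(q)$ and $\pi(x)\pi(y)\le C N^{-d}\pi(q)$; these are equivalent.
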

\begin{proof}
Recall that the quantity $A=\max_{e\in E}\{\frac{1}{W(e)}\sum_{x,y,\gamma(x,y)\ni e}\mathrm{leng}(\gamma(x,y))\pi(x)\pi(y)\}$
is defined in (\ref{eq:Adef}). By (\ref{eq:pathmethod}), to prove
(\ref{eq:lbsg}), it suffices to prove that 
\begin{equation}
A\leq c'N^{2}.\label{eq:Aub}
\end{equation}
On the one hand, by (\ref{eq:maxpqxy}) and claim 3. of (\ref{eq:cfwalkproperty})
one obtains that 
\begin{equation}
\min(\pi(p),\pi(q))\geq c\min(\pi(x),\pi(y)).\label{eq:zwxy}
\end{equation}
This implies that 
\begin{equation}
W(\{p,q\})\overset{(\ref{eq:tiltedgenerator})}{\underset{(\ref{eq:edgedef})}{=}}\pi(p)\frac{f(q)}{2df(p)}\overset{(\ref{eq:cfwalkproperty})\,1.}{=}\frac{1}{2d}f(p)f(q)\overset{(\ref{eq:zwxy})}{=}\frac{1}{2d}\sqrt{\pi(p)\pi(q)}\geq c'\min(\pi(x),\pi(y)).\label{eq:sg1}
\end{equation}
On the other hand, for any $x,y\in U^{N}$, one has 
\begin{equation}
\mathrm{leng}(\gamma(x,y))\leq cN.\label{eq:sg2}
\end{equation}
Now we estimate the maximal possible number of paths that could pass
through a certain edge. We claim that, for any edge $e\in E^{N}$,
where we denote by $E^{N}$ the edge set of $U^{N}$ consisting of
unordered pairs of neighbouring vertices in $U^{N}$:
\begin{equation}
E^{N}=\big\{\{x,y\};\; x,y\in U^{N},\,|x-y|=1\big\},
\end{equation}
there are at most $cN^{d+1}$ paths passing through $e$. We now prove
the claim. To fix a pair of $\{x,y\}$ such that $e=\{(a_{1},\ldots,a_{k},\ldots,a_{d}),(a_{1},\ldots,a_{k}+1,\ldots,a_{d})\}$
belongs to $\gamma(x,y)$, where $k\in\{1,\ldots d\}$, there are
$2d$ coordinates to be chosen. Actually, for $i=1,\ldots,k-1,k+1,\ldots,d$
the $i$-th coordinate of either $x$ or $y$ must be $a_{i}$. This
leaves us at most $2^{d-1}$ ways of choosing $(d-1)$ coordinates
of $x$ and $y$ to be fixed by $a_{1},\cdots a_{k-1},a_{k+1},\cdots a_{d}$.
For the other $(d+1)$ coordinates, we have no more than $cN$ choices
for each of them, since both $x$ and $y$ must lie in $U^{N}$. This
implies that there are no more that $c'N^{d+1}$ pairs of $\{x,y\}\subset U^{N}$,
such that $e\in\gamma(x,y)$ is possible.

Combining the argument in the paragraph above with (\ref{eq:sg1})
and (\ref{eq:sg2}), one has
\begin{align}
\begin{split}A & \overset{(\ref{eq:Adef})}{=}\max_{e\in E^{N}}\frac{1}{W(e)}\sum_{x,y,\gamma(x,y)\ni e}\mathrm{leng}(\gamma(x,y))\pi(x)\pi(y)\\
 & \overset{(\ref{eq:sg1})}{\underset{(\ref{eq:sg2})}{\leq}}\max_{e\in E^{N}}\sum_{x,y,\gamma(x,y)\ni e}c'N\cdot\max(\pi(x),\pi(y))\overset{(\ref{eq:cfwalkproperty})\,4.}{\leq}c''N^{d+1}\cdot N\cdot N^{-d}=c''N^{2}.
\end{split}
\end{align}
This proves (\ref{eq:Aub}) and hence (\ref{eq:lbsg}).
\end{proof}
We then define for $\{\overline{P}_{x}\}_{x\in U^{N}}$ the regeneration
time 
\begin{equation}
\overline{t_{*}}=N^{2}\log^{2}N.\label{eq:regen}
\end{equation}
In view of above proposition, $\overline{t_{*}}$ is much larger than
the relaxation time $1/\overline{\lambda}$, which is of order $O(N^{2})$.
Hence, for all $x$ in $U^{N}$, $\overline{P}_{x}[X_{t}=\cdot]$
becomes very close to the stationary distribution $\pi$, when $t\geq\overline{t_{*}}$.
More precisely, by (\ref{eq:spectralgap2}) and (\ref{eq:lbsg}) 
\begin{equation}
\sup_{x,y\in U^{N}}|\overline{P}_{x}[X_{t}=y]-\pi(y)|\leq\sup_{x,y\in U^{N}}\sqrt{\frac{\pi(y)}{\pi(x)}}e^{-\overline{\lambda}t}\overset{(\ref{eq:cfwalkproperty})\,4.}{\underset{(\ref{eq:lbsg}),(\ref{eq:regen})}{\leq}}e^{-c\log^{2}N}\mbox{ for all }t\geq\overline{t_{*}}.\label{eq:relaxation}
\end{equation}
\vspace{0.3cm}

We now relate the relative entropy between $\widetilde{P}_{N}$ (which
tacitly depends on $R$, $\eta$, $\delta$ and $\epsilon$) and $P_{0}$
to the Dirichlet form of $h_{N}$ and derive an asymptotic upper bound
for it by successively letting $N\to\infty$, $\eta\to0$, $R\to\infty$,
$\delta\to0$ and $\epsilon\to0$ in the following Propositions \ref{prop:entropyub}
and \ref{prop:limsup}. The Brownian capacity of $K$ will appear
as the limit in the above sense of the properly scaled Dirichlet form
of $h_{N}$. 
\begin{prop}
\textup{\label{prop:entropyub}One has
\begin{equation}
H(\widetilde{P}_{N}|P_{0})\leq u_{**}(1+\epsilon)\mathcal{E}_{\mathbb{Z}^{d}}(h_{N},h_{N})+o(N^{d-2}).\label{eq:entropyub}
\end{equation}
}\end{prop}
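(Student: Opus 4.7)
The plan is to compute $H(\widetilde{P}_N|P_0) = \widetilde{E}_N[\log M_{T_N}]$ directly from the formula (\ref{eq:Mtdef}) for $M_{T_N}$. By Corollary \ref{cor:coincide} the process under $\widetilde{P}_N$ coincides in law with the confined walk up to time $T_N$, and the confined walk never exits $U^N$, so $T_N\wedge T_{U^N}=T_N$ almost surely. This gives
\begin{equation}
H(\widetilde{P}_N | P_0) = \widetilde{E}_N\Big[\log\frac{f(X_{T_N})}{f(0)}\Big] + \widetilde{E}_N\Big[\int_0^{T_N} v(X_s)\,ds\Big].
\end{equation}
Since $0\in K\subset K^{2\delta}$ by (\ref{eq:0inK}), claim~2.~of (\ref{eq:thproperty}) forces $h_N(0)=1$, so $f(0)=1/\|h_N\|_2$ is the global maximum of $f$ on $\mathbb{Z}^d$; hence $\log(f(x)/f(0))\le 0$ for every $x\in U^N$ and the boundary term is non-positive and can be discarded.

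For the integral term, Corollary \ref{cor:coincide} and Fubini allow one to write it as $\int_0^{T_N}\sum_x v(x)p_s(x)\,ds$, where $p_s(x):=\overline{P}_0[X_s=x]$. I would split at the relaxation scale $\overline{t_*}=N^2\log^2 N$ from (\ref{eq:regen}). On $[0,\overline{t_*}]$, the one-sided bound $v\le cN^{-2}$ from Lemma \ref{lem:vbound} combined with $p_s\ge 0$ and $\sum_x p_s(x)=1$ gives
\begin{equation}
\int_0^{\overline{t_*}}\sum_x v(x) p_s(x)\,ds\ \le\ cN^{-2}\overline{t_*}\ =\ c\log^2 N\ =\ o(N^{d-2}).
\end{equation}
On $[\overline{t_*},T_N]$ I would approximate $p_s$ by the stationary measure $\pi=f^2$. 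Since $h_N$ attains its maximum at $0$, one has $\pi(x)/\pi(0)\le 1$ for every $x$, so (\ref{eq:spectralgap2}) and Proposition \ref{prop:spectralgap} yield $|p_s(x)-\pi(x)|\le e^{-\overline{\lambda} s}$. Using the crude global bound $\sum_x|v(x)|\le cN^{d+1}$ (which follows from Lemma \ref{lem:vbound} and $|S^N|\le cN^{d-1}$, with $S^N$ as in its proof), one obtains
\begin{equation}
\int_{\overline{t_*}}^{T_N}\sum_x v(x) p_s(x)\,ds\ \le\ (T_N-\overline{t_*})\overline{v}+\frac{cN^{d+1}e^{-\overline{\lambda}\overline{t_*}}}{\overline{\lambda}}\ =\ T_N\overline{v}+o(N^{d-2}),
\end{equation}
where $\overline{v}:=\sum_x\pi(x)v(x)$, the exponential error $cN^{d+3}e^{-c\log^2 N}$ is $o(1)$ by the spectral-gap lower bound, and the linear correction $\overline{t_*}\overline{v}=O(\log^2 N)$ is absorbed in $o(N^{d-2})$.

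To conclude, summation by parts based on the self-adjointness of $\Delta_{\mathrm{dis}}$ identifies
\begin{equation}
\overline{v}\ =\ -\sum_x f(x)\Delta_{\mathrm{dis}}f(x)\ =\ \mathcal{E}_{\mathbb{Z}^d}(f,f)\ =\ \frac{\mathcal{E}_{\mathbb{Z}^d}(h_N,h_N)}{\|h_N\|_2^2},
\end{equation}
so that by (\ref{eq:TNdef}), $T_N\overline{v}=u_{**}(1+\epsilon)\mathcal{E}_{\mathbb{Z}^d}(h_N,h_N)$; combining all bounds yields (\ref{eq:entropyub}). The subtle step is the pre-mixing window $[0,\overline{t_*}]$, because $v$ can be as negative as $-cN^2$ on the inner-boundary layer $S^N$, and a symmetric $|v|$-bound would produce a contribution of order $N^4\log^2 N$, which fails to be $o(N^{d-2})$ for $d\le 6$; the rescue is that such large negative values of $v$ only help in an upper bound, so it suffices to control the uniformly small positive part $v^+\le cN^{-2}$ supplied by Lemma \ref{lem:vbound}.
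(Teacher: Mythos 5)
Your proposal is correct and follows essentially the same route as the paper: the same decomposition of $\log M_{T_N}$ into a boundary term plus the time integral of $v$, the same split of that integral at the relaxation time $\overline{t_*}$ using the one-sided bound $v\le cN^{-2}$ before mixing and the spectral-gap relaxation to $\pi$ afterwards, and the same Green--Gauss identification of $\int v\,d\pi$ with $u_{**}(1+\epsilon)\mathcal{E}_{\mathbb{Z}^{d}}(h_N,h_N)/T_N$. The only cosmetic difference is that you discard the term $\overline{E}_0[\log f(X_{T_N})-\log f(0)]$ by noting $f(0)$ is the global maximum, whereas the paper bounds it by $c\log N$ via claim 4.~of (\ref{eq:cfwalkproperty}); both are adequate.
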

\begin{proof}
By definition of the relative entropy (see (\ref{eq:relentrodef})),
we have
\begin{eqnarray}
 &  & H(\widetilde{P}_{N}|P_{0})\text{\ensuremath{\overset{(\ref{eq:relentrodef})}{=}}}E^{\widetilde{P}_{N}}[\log\frac{d\widetilde{P}_{N}}{dP_{0}}]\overset{(\ref{eq:tilteddef})}{\underset{(\ref{eq:PNdef})}{=}}E^{\widetilde{P}_{N}}[\log M_{T_{N}}]\overset{(\ref{eq:coincide})}{=}\overline{E}_{0}[\log M_{T_{N}}]\nonumber \\
 & \overset{(\ref{eq:Mtdef})}{=} & \overline{E}_{0}[\int_{0}^{T_{N}}v(X_{s})ds+\log f(X_{T_{N}})-\log f(X_{0})]\label{eq:entropysplit}\\
 & = & \overline{E}_{0}[\int_{0}^{\overline{t_{*}}}v(X_{s})ds]+\overline{E}_{0}[\int_{\overline{t_{*}}}^{T_{N}}v(X_{s})ds]+\overline{E}_{0}[\log f(X_{T_{N}})-\log f(X_{0})]={\rm I}+{\rm II}+{\rm III}.\nonumber 
\end{eqnarray}

For an upper bound of I, by (\ref{eq:vub}) and the definition of
$\overline{t_{*}}$ (see (\ref{eq:regen})), we have 
\begin{equation}
{\rm I}\leq\overline{t_{*}}\max_{x\in U^{N}}v(x)\leq c\log^{2}N.\label{eq:esti1}
\end{equation}

For an upper bound of II, we notice that applying (\ref{eq:relaxation})
for $t\in(\overline{t_{*}},T_{N})$, 
\begin{eqnarray}
 &  & |\overline{E}_{0}[\int_{\overline{t_{*}}}^{T_{N}}v(X_{t})dt]-(T_{N}-\overline{t_{*}})\int vd\pi|\nonumber \\
 & \leq & (T_{N}-\overline{t_{*}})\sup_{t\in[\overline{t_{*}},T_{N}]}\sup_{y\in U^{N}}\Big|\overline{P}_{0}[X_{t}=y]-\int vd\pi\Big|\cdot\max_{y\in U^{N}}|v(y)|\label{eq:IIcalc}\\
 & \overset{(\ref{eq:relaxation})}{\leq} & e{}^{-c\log^{2}N}(T_{N}-\overline{t_{*}})\max_{y\in U^{N}}|v(y)|\overset{(\ref{eq:vub})}{\underset{(\ref{eq:hNproperty})\,5.}{\leq}}e^{-c'\log^{2}N}.\nonumber 
\end{eqnarray}
Since $f$ is supported on $U^{N}$ by claim 1. of $(\ref{eq:fproperty})$,
 we may enlarge the range for summation in the second equality in
the following calculation without changing the sum and see that
\begin{align}
\begin{split}\int vd\pi & \overset{(\ref{eq:vdef})}{\underset{(\ref{eq:cfwalkproperty})\,1.}{=}}\sum_{x\in U^{N}}\frac{-\Delta_{{\rm dis}}f(x)}{f(x)}f^{2}(x)\overset{(\ref{eq:TNdef})}{=}\frac{u_{**}(1+\epsilon)}{T_{N}}\sum_{x\in\mathbb{Z}^{d}}-f(x)\Delta_{{\rm dis}}f(x)||h_{N}||_{2}^{2}\\
 &\;\, \overset{(\ref{eq:hN+fdef})}{=}\,\frac{u_{**}(1+\epsilon)}{T_{N}}\sum_{x\in\mathbb{Z}^{d}}-h_{N}(x)\Delta_{{\rm dis}}h_{N}(x).
\end{split}
\label{eq:intvdpi}
\end{align}
By the discrete Green-Gauss theorem and the definition of Dirichlet form, we have 
\begin{equation}
\sum_{x\in\mathbb{Z}^{d}}-h_{N}(x)\Delta_{{\rm dis}}h_{N}(x)=\frac{1}{2}\sum_{\overset{x,x'\in\mathbb{Z}^{d}}{x\sim x'}}\frac{1}{2d}(h_{N}(x')-h_{N}(x))^{2}=\mathcal{E}_{\mathbb{Z}^{d}}(h_{N},h_{N}).\label{eq:redactionDirichlet}
\end{equation}
Hence by (\ref{eq:intvdpi}) and (\ref{eq:redactionDirichlet}) we
know that 
\begin{equation}
(T_{N}-\overline{t_{*}})\int vd\pi\leq u_{**}(1+\epsilon)\mathcal{E}_{\mathbb{Z}^{d}}(h_{N},h_{N}).\label{eq:redactionDirichlet2}
\end{equation}
Thus, we obtain from (\ref{eq:redactionDirichlet2}) and (\ref{eq:IIcalc})
that 
\begin{equation}
{\rm II}\leq u_{**}(1+\epsilon)\mathcal{E}_{\mathbb{Z}^{d}}(h_{N},h_{N})+e^{-c'\log^{2}N}.\label{eq:esti2}
\end{equation}

For the calculation of III, we know that 
\begin{equation}
\overline{E}_{0}[\log f(X_{T_{N}})-\log f(X_{0})]\leq\log\max_{x\in U^{N}}f(x)-\log\min_{x\in U^{N}}f(x)\overset{(\ref{eq:cfwalkproperty})\,4.}{\leq}c\log N.\label{eq:esti3}
\end{equation}

Combining (\ref{eq:esti1}), (\ref{eq:esti2}) and (\ref{eq:esti3}),
we obtain that 
\begin{equation}
H(\widetilde{P}_{N}|P_{0})\leq u_{**}(1+\epsilon)\mathcal{E}_{\mathbb{Z}^{d}}(h_{N},h_{N})+o(N^{d-2}),\label{eq:EntroDiri}
\end{equation}
which is (\ref{eq:entropyub}).
\end{proof}

\begin{prop}
\label{prop:limsup}One has,
\begin{equation}
\limsup_{\epsilon\to0}\limsup_{\delta\to0}\limsup_{R\to\infty}\limsup_{\eta\to0}\limsup_{N\to\infty}\frac{1}{N^{d-2}}H(\widetilde{P}_{N}|P_{0})\leq\frac{u_{**}}{d}\mathrm{cap}_{\mathbb{R}^{d}}(K).\label{eq:limsup}
\end{equation}
\end{prop}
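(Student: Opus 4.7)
The plan is to combine the upper bound from Proposition \ref{prop:entropyub} with a Riemann-sum argument identifying the limit of the discrete Dirichlet form $\mathcal{E}_{\mathbb{Z}^{d}}(h_{N},h_{N})$, and then to pass sequentially to the limits in the auxiliary parameters using property 3 of \eqref{eq:thproperty} and standard continuity of Brownian capacity. In view of Proposition \ref{prop:entropyub}, it suffices to show
\begin{equation}
\limsup_{N\to\infty}\frac{1}{N^{d-2}}\mathcal{E}_{\mathbb{Z}^{d}}(h_{N},h_{N})=\frac{1}{d}\mathcal{E}_{\mathbb{R}^{d}}(\widetilde{h},\widetilde{h}),\label{eq:planscaling}
\end{equation}
after which the outer limits will be handled via
$\mathcal{E}_{\mathbb{R}^{d}}(\widetilde{h},\widetilde{h})\le(1+\eta)^{2}\mathrm{cap}_{\mathbb{R}^{d},B_{(R)}}(K^{2\delta})$.

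To establish \eqref{eq:planscaling}, I would use the $C^{3}$-extension $w$ of $\widetilde{h}$ across $\partial B_{R}$ that was already introduced in the proof of Lemma \ref{lem:vbound} (so that $w=\widetilde{h}$ on $B_{R}$ and all derivatives of $w$ up to order three are uniformly bounded on $B_{(R+1)}$). For every edge $\{x,x+e_{i}\}$ with both endpoints in $U^{N}$, a second-order Taylor expansion with Lagrange remainder gives $h_{N}(x+e_{i})-h_{N}(x)=\tfrac{1}{N}\partial_{i}\widetilde{h}(x/N)+O(N^{-2})$ uniformly in $x$. Squaring, summing over $i$ and over lattice points in $U^{N}$, the cross term disappears and one obtains
\begin{equation}
\mathcal{E}_{\mathbb{Z}^{d}}(h_{N},h_{N})=\frac{1}{2dN^{2}}\sum_{i=1}^{d}\sum_{x\in\mathbb{Z}^{d}}|\partial_{i}\widetilde{h}(x/N)|^{2}\mathbf{1}_{x/N\in B_{R}}+o(N^{d-2}),
\end{equation}
the edges straddling $\partial U^{N}$ contributing at most $O(N^{d-1})\cdot(cN^{-1})^{2}=O(N^{d-3})$ by claim 2 of Lemma \ref{lem:hNproperty}. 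Since each $\partial_{i}\widetilde{h}$ is bounded, compactly supported, and continuous outside the Lebesgue-negligible set $\partial B_{R}$, it is Riemann integrable, so the Riemann sums $N^{-d}\sum_{x}|\partial_{i}\widetilde{h}(x/N)|^{2}\mathbf{1}_{x/N\in B_{R}}$ converge to $\int|\partial_{i}\widetilde{h}|^{2}dy$, yielding \eqref{eq:planscaling}.

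Combining \eqref{eq:entropyub} and \eqref{eq:planscaling} with claim 3 of \eqref{eq:thproperty} gives
\begin{equation}
\limsup_{N\to\infty}\frac{1}{N^{d-2}}H(\widetilde{P}_{N}|P_{0})\le\frac{u_{**}(1+\epsilon)(1+\eta)^{2}}{d}\mathrm{cap}_{\mathbb{R}^{d},B_{(R)}}(K^{2\delta}).
\end{equation}
Now I send $\eta\to0$, which kills the $(1+\eta)^{2}$-factor. Next, as $R\to\infty$, the relative capacity of $K^{2\delta}$ in $B_{(R)}$ decreases monotonically to the absolute Brownian capacity $\mathrm{cap}_{\mathbb{R}^{d}}(K^{2\delta})$ (this is the standard fact that the equilibrium potentials $h$ defined in \eqref{eq:defh} increase to the absolute equilibrium potential). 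Then the family $\{K^{2\delta}\}_{\delta>0}$ of compact neighbourhoods decreases to $K$, and outer regularity of Brownian capacity for compact sets gives $\mathrm{cap}_{\mathbb{R}^{d}}(K^{2\delta})\to\mathrm{cap}_{\mathbb{R}^{d}}(K)$ as $\delta\to0$. Finally, $\epsilon\to0$ removes the remaining factor, producing \eqref{eq:limsup}.

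The main obstacle is the Taylor/Riemann-sum step, where one has to control carefully (i) the uniformity of the Lagrange remainder, which is why the $C^{3}$-extension $w$ is invoked rather than just $\widetilde{h}$ itself, and (ii) the edges at the ``hard boundary'' $\partial_{i}U^{N}$, where the Taylor argument fails but where $h_{N}$ is already small; the estimate $h_{N}=O(N^{-1})$ from claim 2 of Lemma \ref{lem:hNproperty} together with the $O(N^{d-1})$ count of such edges absorbs this boundary layer into the $o(N^{d-2})$ remainder. Everything else is a routine application of standard properties of Brownian capacity.
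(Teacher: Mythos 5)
Your proposal is correct and follows essentially the same route as the paper: Proposition \ref{prop:entropyub}, a Riemann-sum identification of $\lim N^{-(d-2)}\mathcal{E}_{\mathbb{Z}^{d}}(h_{N},h_{N})$ with $\frac{1}{d}\mathcal{E}_{\mathbb{R}^{d}}(\widetilde{h},\widetilde{h})$ in which the boundary layer at $\partial_{i}U^{N}$ is absorbed via claim 2 of (\ref{eq:hNproperty}) and an $O(N^{d-1})$ edge count, followed by the same sequence of limits in $\eta$, $R$, $\delta$, $\epsilon$ using claim 3 of (\ref{eq:thproperty}) and standard capacity convergence. You merely spell out the Taylor-remainder details (and note that the cross term, while not literally vanishing, is $O(N^{d-3})$ in total) more explicitly than the paper does.
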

\begin{proof}
By (\ref{eq:entropyub}), we have 
\begin{equation}
\limsup_{N\to\infty}\frac{1}{N^{d-2}}H(\widetilde{P}_{N}|P_{0})\leq u_{**}(1+\epsilon)\limsup_{N\to\infty}\frac{1}{N^{d-2}}\mathcal{E}_{\mathbb{Z}^{d}}(h_{N},h_{N}).
\end{equation}
By the definition of $h_{N}$, we have 
\begin{equation}
\frac{1}{N^{d-2}}\mathcal{E}_{\mathbb{Z}^{d}}(h_{N},h_{N})=\frac{1}{4dN^{d-2}}\sum_{x\sim y\in\mathbb{Z}^{d}}(h_{N}(y)-h_{N}(x))^{2}\overset{(\ref{eq:hN+fdef})}{=}\frac{1}{4dN^{d-2}}\sum_{x\sim y\in\mathbb{Z}^{d}}(\widetilde{h}(\frac{y}{N})-\widetilde{h}(\frac{x}{N}))^{2}.\label{eq:Dirichletcalc-1}
\end{equation}
By claim 2.~of (\ref{eq:thproperty}), the summation in the right
member of the second equality in (\ref{eq:Dirichletcalc-1}) can be
reduced to $x,y\in U^{N}\cup\partial U^{N}$. Then, we split the sum
into two parts: 
\begin{equation}
\sum_{x\sim y\in\mathbb{Z}^{d}}(\widetilde{h}(\frac{y}{N})-\widetilde{h}(\frac{x}{N}))^{2}=\Sigma_{1}+\Sigma_{2}
\end{equation}
where 
\begin{equation}
\Sigma_{1}=\sum_{x,y\in U^{N},\, x\sim y}(\widetilde{h}(\frac{y}{N})-\widetilde{h}(\frac{x}{N}))^{2}
\end{equation}
contains all summands with $x,y\in U^{N}$, and 
\begin{equation}
\Sigma_{2}=2\sum_{x\in U^{N},\, y\notin U^{N},\, x\sim y}(h_{N}(y)-h_{N}(x))^{2}
\end{equation}
contains all summands with $x$ in $U^{N}$ and $y$ in $\partial U^{N}$.
By claim 2. of (\ref{eq:thproperty}) we find that 
\begin{equation}
\lim_{N\to\infty}\frac{1}{4dN^{d-2}}\Sigma_{1}=\frac{1}{2d}\int_{\mathbb{R}^{d}}|\nabla\widetilde{h}(y)|^{2}dy
\end{equation}
by a Riemann sum argument. While by claim 2. of (\ref{eq:hNproperty}),
we obtain that 
\begin{equation}
\Sigma_{2}\leq c\sum_{x\in\partial_{i}U^{N}}h_{N}(x){}^{2}\leq c'N^{d-1}(\frac{c}{N})^{2}=c''N^{d-3}.
\end{equation}
This implies that 
\begin{equation}
\lim_{N\to\infty}\frac{1}{N^{d-2}}\Sigma_{2}=0.
\end{equation}
Therefore, we have 
\begin{equation}
\limsup_{N\to\infty}\frac{1}{N^{d-2}}\mathcal{E}_{\mathbb{Z}^{d}}(h_{N},h_{N})\leq\lim_{N\to\infty}\frac{1}{4dN^{d-2}}(\Sigma_{1}+\Sigma_{2})=\frac{1}{2d}\int_{\mathbb{R}^{d}}|\nabla\widetilde{h}(y)|^{2}dy=\frac{1}{d}\mathcal{E}_{\mathbb{R}^{d}}(\widetilde{h},\widetilde{h}).
\end{equation}
Therefore, by claim 3. of (\ref{eq:thproperty}) we see that
\begin{equation}
\limsup_{\eta\to0}\limsup_{N\to\infty}\frac{1}{N^{d-2}}H(\widetilde{P}_{N}|P_{0})\leq\limsup_{\eta\to0}\frac{u_{**}(1+\epsilon)}{d}\mathcal{E}_{\mathbb{R}^{d}}(\widetilde{h},\widetilde{h})\leq\frac{u_{**}(1+\epsilon)}{d}\mathrm{cap}_{\mathbb{R}^{d},B_{(R)}}(K^{2\delta}).
\end{equation}
As $R\to\infty$, the relative capacity converges to the usual Brownian
capacity (this follows for instance from the variational characterization
of the capacity in Theorem 2.1.5 on pp. 71 and 72 of \cite{Fukushima}):
\begin{equation}
\mathrm{cap}_{\mathbb{R}^{d},B_{(R)}}(K^{2\delta})\to\mathrm{cap}_{\mathbb{R}^{d}}(K^{2\delta})\textrm{ as }R\to\infty.
\end{equation}
Then, letting $\delta\to0$, by Proposition 1.13, p. 60 of \cite{Portstone},
we have
\begin{equation}
\mathrm{cap}_{\mathbb{R}^{d}}(K^{2\delta})\to\mathrm{cap}_{\mathbb{R}^{d}}(K)\textrm{, as }\delta\to0.
\end{equation}
Finally by letting $\epsilon\to0$ the claim then follows. \end{proof}
\begin{rem}
In this section, guided by the the heuristic strategy described below
(\ref{eq:conjecture}), we have constructed the tilted random walk.
In essence, this continuous-time walk spends up to $T_{N}$, chosen
in (\ref{eq:TNdef}), at each point $x\in K_{N}^{\delta}$ an expected
time equal to $u_{**}(1+\epsilon)h_{N}^{2}(x)=u_{**}(1+\epsilon)$,
when started with the stationary measure $\pi$ of the confined walk.
The low entropic cost of the tilted walk with respect to the simple
random walk is quantified by the above Proposition \ref{prop:limsup}.
We will now see in the subsequent sections that in the vicinity of
points of $K_{N}^{\delta},$ the geometric trace left by the tilted
walk by time $T_{N}$ stochastically donimates random interlacements
at a level ``close to $u_{**}(1+\epsilon)$''.
\end{rem}

\section{Hitting time estimates}

In this section, we relate the entrance time (of the confined walk)
into mesoscopic boxes inside $K_{N}^{\delta}$ to the capacity of
these boxes and $T_{N}$ (see (\ref{eq:TNdef})) and establish a pair
of asymptotically matching bounds in the Propositions \ref{prop:entranceinvup}
and \ref{prop:entranceinvlb}. It is a key ingredient for the construction
of couplings in Section 5. The arguments in this section are similar
to those in Section 3 and the Appendix of \cite{TeiWin}. However,
in our set-up, special care is needed due to the fact that the stationary
measure is massively non-uniform. In this section, the constants tacitly
depend on $\delta$, $\eta$, $\epsilon$ and $R$ (see (\ref{eq:deltadef})
and (\ref{eq:Rr})), $r_{1}$, $r_{2}$, $r_{3}$, $r_{4}$ and $r_{5}$
(see (\ref{eq:rchoice})). 

We start with the precise definition of objects of interest for the
current and the subsequent sections. We denote by $\Gamma^{N}=\partial K_{N}^{\delta/2}$
the boundary in $\mathbb{Z}^{d}$ of the discrete blow-up of $K^{\delta/2}$
(we recall (\ref{eq:boundarydef}) and (\ref{eq:blowupdef-1}) for
the definition of the boundary and of the discrete blow-up). The above
$\Gamma^{N}$ will serve as a set ``surrounding'' $K_{N}$. We choose
real numbers 
\begin{equation}
0<r_{1}<r_{2}<r_{3}<r_{4}<r_{5}<1.\label{eq:rchoice}
\end{equation}

We define for $x_{0}$ in $\Gamma^{N}$ six boxes centered at $x_{0}$
(when there is ambiguity we add a superscript for their center $x_{0}$):
\begin{equation}
A_{i}=B_{\infty}(x_{0},\lfloor N^{r_{i}}\rfloor),\ 1\leq i\leq5,\mbox{ and }A_{6}=B_{\infty}(x_{0},\lfloor\frac{\delta}{100}N\rfloor),\label{eq:boxdef}
\end{equation}
and we tacitly assume that $N$ is sufficiently large so that for
all $x_{0}\in\Gamma^{N}$, the following inclusions hold: 
\begin{equation}
A_{1}\subset A_{2}\subset A_{3}\subset A_{4}\subset A_{5}\subset A_{6}\subset B_{N}^{\delta}\subset\subset\mathbb{Z}^{d}.\label{3.4}
\end{equation}
In view of (\ref{3.4}) and claim 3. of (\ref{eq:fproperty}) we find
that, by (\ref{eq:tiltedgenerator}), for large $N$ and all $x$
in $U^{N}$ 
\begin{equation}
\mbox{ the stopped processes \ensuremath{X_{\cdot\wedge T_{A_{6}}}}under \ensuremath{P_{x}}\mbox{ and }\ensuremath{\overline{P}_{x}}\mbox{ have the same law.}}\label{eq:coincidestopped}
\end{equation}

\begin{rem}
Recall that the regeneration time $\overline{t_{*}}$ is defined in
(\ref{eq:regen}) as $\overline{t_{*}}=N^{2}\log^{2}N$, and for all
$k=1,\ldots,5$, $A_{k}$ are mesoscopic objects of size $O(N^{r})$
where $r\in(0,1)$. Informally, Propositions \ref{prop:entranceinvup}
and \ref{prop:entranceinvlb} will imply that for all $x$ ``far away''
from $A_{k}$, with a high $\overline{P}_{x}$-probability, 
\begin{equation}
T_{N}\gg H_{A_{k}}\gg\overline{t_{*}}.
\end{equation}

Given any $x_{0}$ in $\Gamma^{N}$, we write
\begin{equation}
D=U^{N}\backslash A_{2},\label{eq:Ddef}
\end{equation}
and let
\begin{equation}
g(x)=\overline{P}_{x}[H_{A_{1}}\leq T_{A_{2}}]\overset{(\ref{eq:coincidestopped})}{=}P_{x}[H_{A_{1}}\leq T_{A_{2}}],\ x\in U^{N},\label{eq:gdef}
\end{equation}
be the (tilted) potential function of $A_{1}$ relative to $A_{2}$.
We also let 
\begin{equation}
f_{A_{1}}(x)=1-\frac{\overline{E}_{x}[H_{A_{1}}]}{\overline{E}_{\pi}[H_{A_{1}}]}\label{eq:fA1def}
\end{equation}
be the centered fluctuation of the scaled expected entrance time of
$A_{1}$ (relative to the stationary measure). 

The following lemma shows that the inverse of $\overline{E}_{\pi}[H_{A_{1}}]$
is closely related to $\overline{\mathcal{E}}(g,g)$. (Actually we
are going to prove that they are approximately equal later in this
section, see Propositions \ref{prop:entranceinvlb} and \ref{prop:entranceinvup},
as well as Remark \ref{almost there}.) \end{rem}
\begin{lem}
\label{lem:entranceinvest}One has,
\begin{equation}
\overline{\mathcal{E}}(g,g)(1-2\sup_{x\in D}|f_{A_{1}}(x)|)\leq\frac{1}{\overline{E}_{\pi}[H_{A_{1}}]}\leq\overline{\mathcal{E}}(g,g)\frac{1}{\pi(D)^{2}}.\label{eq:entranceinvest}
\end{equation}

\end{lem}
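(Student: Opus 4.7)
The plan is to prove both inequalities variationally using two natural functions associated with $A_{1}$: the equilibrium potential $g$ from (\ref{eq:gdef}), which equals $1$ on $A_{1}$, vanishes on $D=U^{N}\setminus A_{2}$, and is $\widetilde{L}$-harmonic on $A_{2}\setminus A_{1}$; and the expected hitting time $h(x)=\overline{E}_{x}[H_{A_{1}}]$, which vanishes on $A_{1}$ and satisfies $-\widetilde{L}h=\mathbf{1}_{A_{1}^{c}}$ on $U^{N}\setminus A_{1}$.

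For the upper bound, I would introduce $\phi=1-g$, so that $\phi=0$ on $A_{1}$, $\phi=1$ on $D$, and $\phi\in[0,1]$ on $A_{2}\setminus A_{1}$ by the maximum principle. Using self-adjointness of $\widetilde{L}$ in $l^{2}(\pi)$ (claim 1.~of (\ref{eq:cfwalkproperty})), a short computation gives $\overline{\mathcal{E}}(h,h)=\langle h,-\widetilde{L}h\rangle_{l^{2}(\pi)}=\sum_{x\in A_{1}^{c}}\pi(x)h(x)=\overline{E}_{\pi}[H_{A_{1}}]$, and similarly, using $\phi\equiv 0$ on $A_{1}$, one gets $\overline{\mathcal{E}}(\phi,h)=\sum_{x\in A_{1}^{c}}\pi(x)\phi(x)\geq\pi(D)$. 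Since $\overline{\mathcal{E}}(\phi,\phi)=\overline{\mathcal{E}}(g,g)$, Cauchy--Schwarz for the bilinear form $\overline{\mathcal{E}}$ then yields $\pi(D)^{2}\leq\overline{\mathcal{E}}(g,g)\,\overline{E}_{\pi}[H_{A_{1}}]$, which is the right-hand inequality of (\ref{eq:entranceinvest}).

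For the lower bound, the strategy is to invoke Dirichlet's principle: $g$ minimizes $\overline{\mathcal{E}}(\psi,\psi)$ among all $\psi$ with $\psi=1$ on $A_{1}$ and $\psi=0$ on $D$. The function $f_{A_{1}}$ from (\ref{eq:fA1def}) almost satisfies these constraints---it equals $1$ on $A_{1}$ since $h=0$ there, and $|f_{A_{1}}|\leq c$ on $D$ where $c=\sup_{x\in D}|f_{A_{1}}(x)|$ by definition. Assuming $c<1/2$ (otherwise the left-hand side of (\ref{eq:entranceinvest}) is non-positive and there is nothing to prove), I would take the truncated, rescaled competitor $\psi=(f_{A_{1}}-c)_{+}/(1-c)$, which equals $1$ on $A_{1}$ and $0$ on $D$. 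The Markovian contraction property of $\overline{\mathcal{E}}$, applied to the Lipschitz map $t\mapsto(t-c)_{+}/(1-c)$ of Lipschitz constant $(1-c)^{-1}$, gives $\overline{\mathcal{E}}(\psi,\psi)\leq(1-c)^{-2}\,\overline{\mathcal{E}}(f_{A_{1}},f_{A_{1}})$. Using $f_{A_{1}}=1-h/\overline{E}_{\pi}[H_{A_{1}}]$ together with the identity $\overline{\mathcal{E}}(h,h)=\overline{E}_{\pi}[H_{A_{1}}]$ derived above yields $\overline{\mathcal{E}}(f_{A_{1}},f_{A_{1}})=1/\overline{E}_{\pi}[H_{A_{1}}]$. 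Combining with the elementary inequality $(1-c)^{2}\geq 1-2c$ then delivers the left-hand inequality of (\ref{eq:entranceinvest}).

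The main obstacle I anticipate is the truncation step: the hypothesis that $\sup_{D}|f_{A_{1}}|$ is small is precisely what permits cutting and rescaling $f_{A_{1}}$ into an admissible competitor for the Dirichlet problem at the modest cost of a $(1-c)^{-2}$ factor in the energy, and the Markovian contraction property of $\overline{\mathcal{E}}$ quantifies this cost. All remaining ingredients---Green's identities for $\overline{\mathcal{E}}$, the maximum principle for the harmonic function $\phi$ on $A_{2}\setminus A_{1}$, and Cauchy--Schwarz for the bilinear form $\overline{\mathcal{E}}$---are standard manipulations in the Dirichlet form framework.
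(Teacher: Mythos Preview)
Your argument is correct. The paper itself omits the proof of this lemma, pointing instead to Lemma~3.2 of \cite{CerTeiWin} and Proposition~3.41 of \cite{AF}; your variational approach---Cauchy--Schwarz between $\phi=1-g$ and $h=\overline{E}_{\cdot}[H_{A_{1}}]$ for the upper bound, and Dirichlet's principle with the truncated competitor $\psi=(f_{A_{1}}-c)_{+}/(1-c)$ for the lower bound---is precisely the standard route those references take, adapted to the non-constant jump-rate setting of the confined walk.
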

The proof is omitted due to its similarity to the proof of Lemma 3.2
in \cite{CerTeiWin} (which further calls Proposition 3.41 in \cite{AF},
which is originally intended for Markov chain with constant jump rate).
\vspace{0.3cm}

In the next lemma we collect some properties of entrance probabilities
for later use, namely Propositions \ref{prop:entranceinvup}, \ref{prop:entranceinvlb},
\ref{prop:qsdem} and \ref{prop:enoughexcursions}. 
\begin{lem}
\label{lem:hittingtimeest}For large $N$, one has
\begin{equation}
\overline{P}_{x}[H_{A_{1}}<\overline{t_{*}}]\leq N^{-c}\quad\mbox{ for all }x\in D,\label{eq:hittingtimebig}
\end{equation}
and similarly
\begin{equation}
\overline{P}_{x}[H_{A_{2}}<\overline{t_{*}}]\leq N^{-c'}\quad\mbox{ for all }x\in U^{N}\backslash A_{3}.\label{eq:hittingtimebig1}
\end{equation}
Uniformly for all $x\in\partial_{i}A_{1}$, one has
\begin{equation}
e_{A_{1}}(x)\leq P_{x}[T_{A_{6}}<\widetilde{H}_{A_{1}}]\leq P_{x}[T_{A_{3}}<\widetilde{H}_{A_{1}}]\leq P_{x}[T_{A_{2}}<\widetilde{H}_{A_{1}}]\leq e_{A_{1}}(x)(1+N^{-c''}).\label{eq:emest}
\end{equation}
\end{lem}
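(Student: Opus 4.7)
The plan is to reduce, via the coupling identity (\ref{eq:coincidestopped}), all bounds on the confined walk over time intervals $[0, T_{A_6}]$ to classical potential-theoretic estimates for the simple random walk on $\mathbb{Z}^d$ with $d \geq 3$. The nested inclusions $A_1 \subset A_2 \subset A_3 \subset A_6$ recorded in (\ref{3.4}) make this reduction directly applicable.

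For the hitting-time bounds (\ref{eq:hittingtimebig}) and (\ref{eq:hittingtimebig1}), whose proofs are structurally identical up to renaming of the sets and exponents, I would proceed in two steps. First I would bound the single-crossing probability: for $x \in A_6 \cap D$, (\ref{eq:coincidestopped}) gives $\overline{P}_x[H_{A_1} < T_{A_6}] = P_x[H_{A_1} < T_{A_6}]$, and the simple-random-walk equilibrium potential of $A_1$ relative to $A_6$ at $x$ is controlled by $c\,\mathrm{cap}(A_1)/d_\infty(x, x_0)^{d-2}$, which is at most $cN^{-(r_2-r_1)(d-2)}$ using (\ref{eq:capacityorder}) and $d_\infty(x, x_0) \geq N^{r_2}$ for $x \in D$. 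Second, to extend from one crossing to the full window $[0, \overline{t_*}]$, I would decompose the trajectory into successive entries into $A_6$: by strong Markov (and by a further application of strong Markov at $H_{A_6}$ when $x \notin A_6$), each excursion into $A_6$ starting from $\partial_i A_6$ hits $A_1$ with probability at most $cN^{(r_1-1)(d-2)}$ (again by the same SRW estimate applied in $A_6$ of radius $\sim \delta N$). Since each such excursion takes expected duration $\gtrsim N^2$ under SRW inside $A_6$, the number of excursions performed in time $\overline{t_*} = N^2 \log^2 N$ is at most $O(\log^2 N)$ up to a tail of probability $\exp(-c \log^2 N)$, using Proposition~\ref{prop:spectralgap} and claim~3 of (\ref{eq:cfwalkproperty}) to bound the confined-walk return times. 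A union bound then yields the desired polynomial decay $N^{-c}$.

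For (\ref{eq:emest}), the three middle inequalities follow by pure monotonicity: the nested inclusions $A_1 \subsetneq A_2 \subsetneq A_3 \subsetneq A_6$ give $T_{A_2} \leq T_{A_3} \leq T_{A_6}$ $P_x$-a.s., and $\{\widetilde H_{A_1} = \infty\} \subset \{T_{A_6} < \widetilde H_{A_1}\}$, which chains into the stated four-term inequality (recalling $e_{A_1}(x) = P_x[\widetilde H_{A_1} = \infty]$ from (\ref{eq:emdef})). The substantive step is the final upper bound. I would decompose
\begin{equation*}
  P_x[T_{A_2} < \widetilde H_{A_1}] = e_{A_1}(x) + P_x[T_{A_2} < \widetilde H_{A_1} < \infty],
\end{equation*}
and control the second term by strong Markov at $T_{A_2}$:
\begin{equation*}
  P_x[T_{A_2} < \widetilde H_{A_1} < \infty] \leq P_x[T_{A_2} < \widetilde H_{A_1}] \cdot \sup_{y \in \partial A_2} P_y[H_{A_1} < \infty].
\end{equation*}
The classical SRW bound $P_y[H_{A_1} < \infty] \leq c\,\mathrm{cap}(A_1)/d_\infty(y, x_0)^{d-2}$ (valid for $d \geq 3$) combined with (\ref{eq:capacityorder}) yields $\sup_{y \in \partial A_2} P_y[H_{A_1} < \infty] \leq cN^{-(r_2-r_1)(d-2)}$. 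Calling this last quantity $N^{-c''}$ and solving the resulting linear inequality produces the multiplicative error $(1+N^{-c''})$ after taking $N$ large.

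The principal obstacle is the excursion-counting step in (\ref{eq:hittingtimebig}): because $\pi$ is highly non-uniform on $U^N$ and concentrates near $K_N^\delta$, one cannot straightforwardly adapt the uniform-mixing heuristics used in \cite{TeiWin}. The workaround is to confine the use of uniform SRW-type bounds to the interior of $A_6$ via (\ref{eq:coincidestopped}) (where the non-uniformity of $\pi$ plays no role), and to appeal to the spectral gap and claim~3 of (\ref{eq:cfwalkproperty}) only when bounding the total number of returns to $A_6$, a scale on which the non-uniformity is benign.
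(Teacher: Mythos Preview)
Your treatment of (\ref{eq:emest}) is correct and essentially the same as the paper's: the chain of monotone inclusions is immediate, and for the rightmost inequality the paper also writes $P_x[T_{A_2}<\widetilde H_{A_1}]=e_{A_1}(x)+P_x[T_{A_2}<\widetilde H_{A_1}<\infty]$ and bounds the second summand by $N^{-c}e_{A_1}(x)$ via the same strong Markov/hitting-probability estimate you describe.

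For (\ref{eq:hittingtimebig}) your excursion-counting step has a real gap. The assertion that ``each such excursion takes expected duration $\gtrsim N^2$ under SRW inside $A_6$'' is false when the excursion starts at $\partial_i A_6$: from the inner boundary of a box of radius $r$ the SRW exit time has expectation of order $r$, not $r^2$, and the confined walk can re-enter $A_6$ from $\partial A_6$ in $O(1)$ time. Consequently the number of entries into $A_6$ during $[0,\overline{t_*}]$ need not be $O(\log^2 N)$; it can be polynomially large in $N$, and for $d=3$ the resulting union bound $(\text{number of excursions})\times N^{(r_1-1)(d-2)}$ does not decay. Neither the spectral-gap bound of Proposition~\ref{prop:spectralgap} nor claim~3 of (\ref{eq:cfwalkproperty}) controls these boundary-crossing return times.

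The paper avoids this by working from $\partial A_2$ rather than $\partial_i A_6$, and replacing the excursion count by a time-step induction. Starting from $x\in\partial A_2$ (which sits at distance $\sim N$ from $\partial A_6$), one writes
\[
\overline P_x[H_{A_1}<\widetilde t]\le \overline P_x[H_{A_1}<T_{A_6}]+\overline P_x[T_{A_6}\le t_\#]+\overline P_x[t_\#<T_{A_6}<H_{A_1}<\widetilde t],
\]
with $t_\#=N^2/\log N$. The first term is $\le N^{-c}$ by the SRW capacity bound (your Step~1); the second is $\le N^{-c}$ by a sub-Gaussian displacement estimate for SRW over the short window $t_\#$ (Doob's inequality applied coordinatewise), which is exactly where the macroscopic distance $d_\infty(\partial A_2,\partial A_6)\ge cN$ is used; and the third, after applying Markov at $t_\#$, then at $T_{A_6}$, then at $H_{A_2}$, is bounded by $\sup_{\partial A_2}\overline P[H_{A_1}<\widetilde t - t_\#]$. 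Iterating $\lceil\overline{t_*}/t_\#\rceil\sim\log^3 N$ times gives the claim. The essential point is that the relevant ``excursion'' is from $\partial A_2$ to $\partial A_6$, which genuinely takes time $\gtrsim t_\#$ with high probability, whereas your excursions from $\partial_i A_6$ to $\partial A_6$ do not.
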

\begin{proof}
We start with (\ref{eq:hittingtimebig}). First, we explain that,
to prove (\ref{eq:hittingtimebig}), it suffices to show that
\begin{equation}
\sup_{x\in\partial A_{2}}\overline{P}_{x}[H_{A_{1}}<\widetilde{t}]\leq\sup_{x\in\partial A_{2}}\overline{P}_{x}[H_{A_{1}}<\widetilde{t}-t_{\#}]+N^{-c'}\mbox{, for all }0\leq\widetilde{t}\leq\overline{t_{*}},\label{eq:induction}
\end{equation}
where we write $t_{\#}$ for $N^{2}/\log N$. Indeed, with (\ref{eq:induction}),
the claim (\ref{eq:hittingtimebig}) follows by an induction argument:
\begin{align}
\begin{split}\sup_{x\in D}\overline{P}_{x}[H_{A_{1}}<\overline{t_{*}}]\leq\sup_{x\in\partial A_{2}}\overline{P}_{x}[H_{A_{1}}<\overline{t_{*}}]\leq\sup_{x\in\partial A_{2}}\overline{P}_{x}[H_{A_{1}}<\overline{t_{*}}-t_{\#}]+N^{-c'}\leq\cdots\\
\leq\sup_{x\in\partial A_{2}}\overline{P}_{x}[H_{A_{1}}<\overline{t_{*}}-\lceil\frac{\overline{t_{*}}}{t_{\#}}\rceil t_{\#}]+\lceil\frac{\overline{t_{*}}}{t_{\#}}\rceil N^{-c'}\overset{(\ref{eq:regen})}{\leq}0+c\log^{3}N\cdot N^{-c'}\leq N^{-c''}.
\end{split}
\end{align}
Now we prove (\ref{eq:induction}). We pick $\widetilde{t}$ in $[0,\overline{t_{*}}]$.
One has 
\begin{equation}
\sup_{x\in\partial A_{2}}\overline{P}_{x}[H_{A_{1}}<\widetilde{t}]\leq\sup_{x\in\partial A_{2}}\overline{P}_{x}[H_{A_{1}}<T_{A_{6}}]+\sup_{x\in\partial A_{2}}\overline{P}_{x}[T_{A_{6}}<H_{A_{1}}<\widetilde{t}].\label{eq:tstarsep}
\end{equation}
On the one hand, by Proposition 1.5.10, p. 36 of \cite{Intersection},
one has 
\begin{equation}
\sup_{x\in\partial A_{2}}\overline{P}_{x}[H_{A_{1}}<T_{A_{6}}]\overset{(\ref{eq:coincidestopped})}{=}\sup_{x\in\partial A_{2}}P_{x}[H_{A_{1}}<T_{A_{6}}]\leq N^{-c}.\label{eq:escape1}
\end{equation}
Now we seek an upper bound for the second term in the right member
of (\ref{eq:tstarsep}). We write
\begin{equation}
\sup_{x\in\partial A_{2}}\overline{P}_{x}[T_{A_{6}}<H_{A_{1}}<\widetilde{t}]\leq\sup_{x\in\partial A_{2}}\overline{P}_{x}[t_{\#}<T_{A_{6}}<H_{A_{1}}<\widetilde{t}]+\sup_{x\in\partial A_{2}}\overline{P}_{x}[T_{A_{6}}\leq t_{\#}]={\rm I}+{\rm II}.\label{eq:rep1}
\end{equation}
To bound I we can assume that $t_{\#}<\widetilde{t}$ (otherwise ${\rm I}=0$).
Applying Markov property successively (first at time $t_{\#}$, then
at time $T_{A_{6}}$, and finally at time $H_{A_{2}}$), we find
\begin{equation}
{\rm I}\leq\sup_{y\in U^{N}}\overline{P}_{y}[T_{A_{6}}<H_{A_{1}}<\widetilde{t}-t_{\#}]\leq\sup_{x\in\partial A_{6}}\overline{P}_{x}[H_{A_{1}}<\widetilde{t}-t_{\#}]\leq\sup_{x\in\partial A_{2}}\overline{P}_{x}[H_{A_{1}}<\widetilde{t}-t_{\#}].\label{eq:telescope}
\end{equation}
Hence to prove (\ref{eq:induction}), it suffices to prove that 
\begin{equation}
{\rm II}\leq N^{-c}.\label{eq:IIbound}
\end{equation}
Recalling that $d_{\infty}(\partial A_{2},\partial A_{6})\geq cN$,
we find that
\begin{equation}
{\rm II}\overset{(\ref{eq:coincidestopped})}{=}\sup_{x\in\partial A_{2}}P_{x}[T_{A_{6}}<t_{\#}]\leq dP[T_{[-mN,mN]}\leq t_{0}],\label{eq:tooearly}
\end{equation}
where $P$ is the probability law of a one-dimensional random walk
started from $0$ (and we denote by $E$ the corresponding expectation),
$t_{0}=t_{\#}/d$, and $m=\delta/200$. We know that 
\begin{equation}
P[T_{[-mN,mN]}\leq t_{0}]=P[\max_{0\leq t\leq t_{0}}|X_{t}|\geq mN].
\end{equation}
By Doob's inequality, we have for $\lambda>0$, using symmetry 
\begin{equation}
P[\max_{0\leq t\leq t_{0}}|X_{t}|\geq mN]=2P[\max_{0\leq t\leq t_{0}}\exp(\lambda X_{t})\geq\exp(\lambda mN)]\leq\frac{2E[\exp(\lambda X_{t_{0}})]}{\exp(\lambda mN)}.\label{eq:Doob}
\end{equation}
Note that $\exp\big\{\lambda X_{t}-t(\cosh\lambda-1)\big\}$, $t\geq0$,
is a martingale under $P$, so 
\begin{equation}
E[\exp(\lambda X_{t_{0}})]=\exp\big\{ t_{0}(\cosh\lambda-1)\big\}.
\end{equation}
Hence by taking $\lambda=\frac{mN}{2t_{0}}=cN^{-1}\log^{-1}N,$ we
obtain that the right-hand term of (\ref{eq:Doob}) is bounded from
above by 
\begin{equation}
2\exp\big\{ t_{0}(\cosh\lambda-1)-\frac{m^{2}N^{2}}{2t_{0}}\big\}\leq2\exp(-c\frac{m^{2}N^{2}}{2t_{0}})\leq N^{-c'}.
\end{equation}
This implies that 
\begin{equation}
P[T_{[-mN,mN]}\leq t_{0}]\leq N^{-c}.\label{eq:tooearly2}
\end{equation}
Thus one obtains (\ref{eq:IIbound}) by collecting (\ref{eq:tooearly})
and (\ref{eq:tooearly2}). This finishes the proof of (\ref{eq:induction})
and hence of (\ref{eq:hittingtimebig}).

The claim (\ref{eq:hittingtimebig1}) follows by a similar argument.

Now we turn to (\ref{eq:emest}). All, except the rightmost inequality
of (\ref{eq:emest}), are immediate. For the rightmost inequality,
we first notice that by an estimate similar to the discussion below
(3.25) of \cite{TeiWin} we have,
\begin{equation}
P_{x}[T_{A_{2}}<\widetilde{H}_{A_{1}}<\infty]\leq N^{-c}e_{A_{1}}(x)\quad\mbox{ for all }x\in\partial_{i}A_{1}.
\end{equation}
And hence we get that 
\begin{equation}
P_{x}[T_{A_{2}}<\widetilde{H}_{A_{1}}]=P_{x}[\widetilde{H}_{A_{1}}=\infty]+P_{y}[T_{A_{2}}<\widetilde{H}_{A_{1}}<\infty]\leq(1+N^{-c})e_{A_{1}}(x)\quad\mbox{ for all }x\in\partial_{i}A_{1}.\label{eq:Dirichletub}
\end{equation}
This concludes the proof of (\ref{eq:emest}) and hence of Lemma \ref{lem:hittingtimeest}.
\end{proof}
Now we make a further calculation of the tilted Dirichlet form of
$g$ defined in (\ref{eq:gdef}).
\begin{prop}
For large $N$, one has,
\begin{equation}
\frac{\mathrm{cap}(A_{1})}{T_{N}}u_{**}(1+\epsilon)\leq\overline{\mathcal{E}}(g,g)\leq(1+N^{-c})\frac{\mathrm{cap}(A_{1})}{T_{N}}u_{**}(1+\epsilon).\label{eq:Dirichletub-1}
\end{equation}
\end{prop}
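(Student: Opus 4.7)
The plan is to reduce $\overline{\mathcal{E}}(g,g)$ to the classical simple random walk Dirichlet form $\mathcal{E}_{\mathbb{Z}^{d}}(g,g)$, identify the latter with the relative capacity of $A_{1}$ in $A_{2}$, and sandwich that between $\mathrm{cap}(A_{1})$ and $(1+N^{-c})\mathrm{cap}(A_{1})$ via Lemma~\ref{lem:hittingtimeest}. Two structural observations drive the argument. First, (\ref{eq:gdef}) and the definition of $T_{A_{2}}$ give $g=1$ on $A_{1}$, $g=0$ off $A_{2}$, with intermediate values in between, so only edges $\{x,y\}$ with both endpoints in $A_{2}\cup\partial A_{2}\subset A_{3}$ contribute to $\overline{\mathcal{E}}(g,g)$. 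Second, the box hierarchy in (\ref{3.4}) forces $A_{3}\subset A_{6}\subset K_{N}^{\delta}$ for large $N$, and on $K_{N}^{\delta}$ one has $h_{N}\equiv 1$ (this is exactly what allowed claim~3 of Lemma~\ref{lem:fproperty}), so $f(x)=1/\|h_{N}\|_{2}$ at every vertex adjacent to an edge where $g$ varies.

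Plugging this into the expression for $\overline{\mathcal{E}}$ in claim~2 of (\ref{eq:cfwalkproperty}) and invoking (\ref{eq:TNdef}) yields
\begin{equation*}
\overline{\mathcal{E}}(g,g)\;=\;\frac{1}{\|h_{N}\|_{2}^{2}}\,\mathcal{E}_{\mathbb{Z}^{d}}(g,g)\;=\;\frac{u_{**}(1+\epsilon)}{T_{N}}\,\mathcal{E}_{\mathbb{Z}^{d}}(g,g).
\end{equation*}
For the Dirichlet form on the right, observe that by (\ref{eq:coincidestopped}) $g$ also coincides with the SRW equilibrium potential of $A_{1}$ relative to $A_{2}$, so it is $\Delta_{\mathrm{dis}}$-harmonic on $A_{2}\setminus A_{1}$ and compactly supported. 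A summation by parts then collapses $\mathcal{E}_{\mathbb{Z}^{d}}(g,g)=\sum_{x}g(x)(-\Delta_{\mathrm{dis}}g)(x)$ to a sum over $\partial_{i}A_{1}$, and a one-step Markov computation at each such $x$ identifies the result as
\begin{equation*}
\mathcal{E}_{\mathbb{Z}^{d}}(g,g)\;=\;\sum_{x\in\partial_{i}A_{1}}P_{x}\big[\widetilde{H}_{A_{1}}>T_{A_{2}}\big],
\end{equation*}
i.e., the relative capacity of $A_{1}$ in $A_{2}$.

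The first and the rightmost inequality of (\ref{eq:emest}) in Lemma~\ref{lem:hittingtimeest} then sandwich each summand between $e_{A_{1}}(x)$ and $(1+N^{-c})e_{A_{1}}(x)$; summing over $\partial_{i}A_{1}$ gives $\mathrm{cap}(A_{1})\le\mathcal{E}_{\mathbb{Z}^{d}}(g,g)\le(1+N^{-c})\mathrm{cap}(A_{1})$, which combined with the previous display yields (\ref{eq:Dirichletub-1}). The only step demanding genuine care is the very first one, namely checking that $f$ is constant on every edge where $g$ varies: this uses the ``inner padding'' built into (\ref{3.4}) ($A_{2}\cup\partial A_{2}\subset A_{3}\subset A_{6}\subset K_{N}^{\delta}$) together with $h_{N}\equiv 1$ on $K_{N}^{\delta}$. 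Without this the scalar factorization of $\overline{\mathcal{E}}(g,g)$ fails, and one would have to keep track of the small fluctuations of $f$ across $A_{2}$; once it is in place, what remains is pure simple-random-walk potential theory plus Lemma~\ref{lem:hittingtimeest}.
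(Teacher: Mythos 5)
Your proposal is correct and follows essentially the same route as the paper: both arguments hinge on $f^{2}$ being constant equal to $u_{**}(1+\epsilon)/T_{N}$ on the region where $g$ varies (claim 3 of (\ref{eq:fproperty}) together with (\ref{3.4})), then use harmonicity of $g$ and a summation by parts plus one-step Markov computation to reduce $\overline{\mathcal{E}}(g,g)$ to $\frac{u_{**}(1+\epsilon)}{T_{N}}\sum_{y\in\partial_{i}A_{1}}P_{y}[T_{A_{2}}<\widetilde{H}_{A_{1}}]$, and finally sandwich this via (\ref{eq:emest}). Your intermediate factorization through $\mathcal{E}_{\mathbb{Z}^{d}}(g,g)$ is only a cosmetic repackaging of the paper's direct computation of $(g,-\widetilde{L}g)_{l^{2}(\pi)}$.
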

\begin{proof}
Combining the fact that $\pi=f^{2}$ (from claim 1. of (\ref{eq:cfwalkproperty})),
and the observation that $g$ is discrete harmonic in $A_{2}\backslash A_{1}$,
$g=1$ on $A_{1}$ and $g=0$ outside $A_{2}$, one has (recall that
$Z_{1}$ is the first step of the discrete chain attached to $X_{t}$,
$t\geq0$, see (\ref{eq:skeleton chain})) 
\begin{eqnarray}
\overline{\mathcal{E}}(g,g) & = & (g,-\widetilde{L}g)_{l^{2}(\pi)}=\frac{u_{**}(1+\epsilon)}{T_{N}}\sum_{y\in\partial_{i}A_{1}}g(y)(g(y)-\sum_{x\sim y}\frac{1}{2d}g(x))\nonumber \\
 & \overset{(\ref{eq:gdef})}{=} & \frac{u_{**}(1+\epsilon)}{T_{N}}\sum_{y\in\partial_{i}A_{1}}(1-\sum_{x\sim y}P_{y}[Z_{1}=x]P_{x}[H_{A_{1}}<T_{A_{2}}])\label{eq:Dirichletcalc}\\
 & \overset{\textrm{Markov}}{=} & \frac{u_{**}(1+\epsilon)}{T_{N}}\sum_{y\in\partial_{i}A_{1}}P_{y}[T_{A_{2}}<\widetilde{H}_{A_{1}}].\nonumber 
\end{eqnarray}
On the one hand, by the rightmost inequality in (\ref{eq:emest}),
one has
\begin{equation}
\sum_{y\in\partial_{i}A_{1}}P_{y}[T_{A_{2}}<\widetilde{H}_{A_{1}}]\leq(1+N^{-c})\sum_{y\in\partial_{i}A_{1}}e_{A_{1}}(y)=(1+N^{-c})\mathrm{cap}(A_{1}).\label{eq:sumub}
\end{equation}
On the other hand, one also knows that 
\begin{equation}
\mathrm{cap}(A_{1})=\sum_{y\in\partial_{i}A_{1}}e_{A_{1}}(y)\overset{(\ref{eq:emest})}{\leq}\sum_{y\in\partial_{i}A_{1}}P_{y}[T_{A_{2}}<\widetilde{H}_{A_{1}}].\label{eq:sumlb}
\end{equation}
Thus the claim (\ref{eq:Dirichletub-1}) follows by collecting (\ref{eq:Dirichletcalc}),
(\ref{eq:sumub}) and (\ref{eq:sumlb}).
\end{proof}
Next we prove the first half of the main estimate of this section,
namely the upper bound on $1/\overline{E}_{\pi}[H_{A_{1}}]$. Let
us mention that this upper bound will actually be needed in the proof
of Lemma \ref{lem:fA1lb}.
\begin{prop}
\label{prop:entranceinvup}For large $N$, one has
\begin{equation}
\frac{1}{\overline{E}_{\pi}[H_{A_{1}}]}\leq(1+N^{-c})\frac{\mathrm{cap}(A_{1})}{T_{N}}u_{**}(1+\epsilon).\label{eq:entranceinvub}
\end{equation}
As a consequence, one has 
\begin{equation}
\overline{E}_{\pi}[H_{A_{1}}]\geq cN^{2+c'}.\label{eq:entrancelb}
\end{equation}
\end{prop}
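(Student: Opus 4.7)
The plan is to combine the already-established upper bound on the Dirichlet form $\overline{\mathcal{E}}(g,g)$ from (\ref{eq:Dirichletub-1}) with the rightmost inequality of Lemma \ref{lem:entranceinvest}, namely
\[
\frac{1}{\overline{E}_{\pi}[H_{A_{1}}]}\leq\overline{\mathcal{E}}(g,g)\cdot\frac{1}{\pi(D)^{2}}.
\]
The only remaining ingredient is therefore to show that $\pi(D)^{-2}=1+N^{-c}$, i.e.\ to show that $\pi(A_2)$ is a negative power of $N$. This is straightforward: since $\pi=f^2$ by claim 1 of (\ref{eq:cfwalkproperty}) and $|A_{2}|\leq cN^{dr_{2}}$ by (\ref{eq:boxdef}), claim 4 of (\ref{eq:cfwalkproperty}) gives
\[
\pi(A_{2})=\sum_{x\in A_{2}}f^{2}(x)\leq cN^{dr_{2}}\cdot c'N^{-d}=c''N^{-d(1-r_{2})},
\]
which tends to $0$ as $N\to\infty$ since $r_{2}<1$. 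Consequently $\pi(D)=1-\pi(A_{2})\geq 1-N^{-c}$, so $\pi(D)^{-2}\leq 1+N^{-c'}$. Plugging this into Lemma \ref{lem:entranceinvest} and combining with (\ref{eq:Dirichletub-1}) yields (\ref{eq:entranceinvub}) after absorbing the two $(1+N^{-c})$ factors into a single one.

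The consequence (\ref{eq:entrancelb}) follows by inverting (\ref{eq:entranceinvub}) and plugging in quantitative bounds on the two factors appearing on its right-hand side. On the one hand, $T_{N}\geq cN^{d}$ by claim 5 of (\ref{eq:hNproperty}). On the other hand, since $A_{1}$ is a box of side $2\lfloor N^{r_{1}}\rfloor+1$, the standard capacity estimate (\ref{eq:capacityorder}) gives $\mathrm{cap}(A_{1})\leq c'N^{(d-2)r_{1}}$. Therefore
\[
\overline{E}_{\pi}[H_{A_{1}}]\geq\frac{T_{N}}{(1+N^{-c})u_{**}(1+\epsilon)\mathrm{cap}(A_{1})}\geq \bar c\,N^{d-(d-2)r_{1}},
\]
and the exponent $d-(d-2)r_{1}$ strictly exceeds $2$ because $r_{1}<1$, which gives (\ref{eq:entrancelb}) with $c'=(d-2)(1-r_{1})>0$.

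No step here is delicate in itself; the only thing one has to be careful about is the direction in which the various error terms accumulate, and in particular that the non-uniformity of $\pi$ does not spoil the estimate $\pi(A_{2})=o(1)$. This is guaranteed by the uniform upper bound on $f^{2}$ in claim 4 of (\ref{eq:cfwalkproperty}), which itself relies on the lower bound $h_{N}\geq cN^{-2}$ on $U^{N}$ being reflected only in the \emph{lower} bound on $\pi$ and not the upper one. Once this observation is made, both (\ref{eq:entranceinvub}) and (\ref{eq:entrancelb}) follow by direct substitution.
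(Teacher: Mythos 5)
Your argument is correct and follows essentially the same route as the paper: the upper bound comes from combining the right-hand inequalities of (\ref{eq:entranceinvest}) and (\ref{eq:Dirichletub-1}) together with the estimate $\pi(A_{2})\leq cN^{(r_{2}-1)d}$, and the consequence (\ref{eq:entrancelb}) follows from (\ref{eq:capacityorder}) and claim 5.~of (\ref{eq:hNproperty}) exactly as you describe, with exponent $d-(d-2)r_{1}=2+(d-2)(1-r_{1})>2$. Nothing further is needed.
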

\begin{proof}
We first prove (\ref{eq:entranceinvub}). We apply the right-hand
inequality in (\ref{eq:Dirichletub-1}) to the right-hand estimate
in (\ref{eq:entranceinvest}). Note that 
\begin{equation}
\pi(D)=1-\pi(A_{2})\geq1-cN^{(r_{2}-1)d}\mbox{\quad\ for large }N,\label{eq:piA2est}
\end{equation}
for large $N$, with the help of (\ref{eq:Dirichletub-1}) we thus
find that
\begin{equation}
\frac{1}{\overline{E}_{\pi}[H_{A_{1}}]}\overset{(\ref{eq:entranceinvest})}{\leq}\frac{\overline{\mathcal{E}}(g,g)}{\pi(D)^{2}}\overset{(\ref{eq:piA2est})}{\leq}(1-cN^{(r_{2}-1)d})^{-2}\overline{\mathcal{E}}(g,g)\overset{(\ref{eq:Dirichletub-1})}{\leq}(1+N^{-c})\frac{\mathrm{cap}(A_{1})}{T_{N}}u_{**}(1+\epsilon).
\end{equation}
This yields (\ref{eq:entranceinvub}). Then the claim (\ref{eq:entrancelb})
follows by observing (\ref{eq:capacityorder}) and claim 5. of (\ref{eq:hNproperty}).
\end{proof}
In the following Lemma \ref{lem:fA1lb} and Proposition \ref{prop:entranceinvlb}
we build a corresponding lower bound by controlling the fluctuation
function $f_{A_{1}}$ defined in (\ref{eq:fA1def}).
\begin{lem}
\label{lem:fA1lb}For large $N$, one has 
\begin{equation}
f_{A_{1}}(x)\geq-N^{-c},\quad\mbox{ for all }x\in U^{N}.\label{eq:fA1lb}
\end{equation}
and in the notation of (\ref{eq:Ddef}) 
\begin{equation}
\overline{E}_{x}[H_{A_{1}}]\geq\overline{E}_{\pi}[H_{A_{1}}]-e^{-c'\log^{2}N}-\overline{P}_{x}[H_{A_{1}}\le\overline{t_{*}}](\overline{t_{*}}+\overline{E}_{\pi}[H_{A_{1}}])\quad\mbox{ for all }x\in D.\label{eq:lbcalc1}
\end{equation}
\end{lem}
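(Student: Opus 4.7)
The plan is to exploit the rapid mixing of the confined walk at the scale $\overline{t_{*}}$ provided by (\ref{eq:relaxation}) in order to compare, via a Markov decomposition at time $\overline{t_{*}}$, the quantity $\overline{E}_{x}[H_{A_{1}}]$ for an arbitrary starting point $x$ with $\overline{E}_{\pi}[H_{A_{1}}]$. The key intermediate step will be a uniform control on $M := \max_{y \in U^{N}} \overline{E}_{y}[H_{A_{1}}]$ obtained by a self-bounding argument.

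For the uniform bound, first decompose, for any $y \in U^{N}$,
\[
\overline{E}_{y}[H_{A_{1}}] = \overline{E}_{y}[H_{A_{1}}; H_{A_{1}} \le \overline{t_{*}}] + \overline{E}_{y}[H_{A_{1}}; H_{A_{1}} > \overline{t_{*}}],
\]
bound $H_{A_{1}}$ by $\overline{t_{*}}$ on the first event, and on the complement use $H_{A_{1}} = \overline{t_{*}} + H_{A_{1}} \circ \theta_{\overline{t_{*}}}$ together with the Markov property at $\overline{t_{*}}$; dropping the indicator after the Markov step yields
\[
\overline{E}_{y}[H_{A_{1}}] \le \overline{t_{*}} + \sum_{z \in U^{N}} \overline{P}_{y}[X_{\overline{t_{*}}} = z]\,\overline{E}_{z}[H_{A_{1}}].
\]
Summing the pointwise estimate (\ref{eq:spectralgap2}) over $z \in U^{N}$ and inserting (\ref{eq:cfwalkproperty})\,4., (\ref{eq:lbsg}) and (\ref{eq:regen}) gives the $L^{1}$-bound $\sum_{z} |\overline{P}_{y}[X_{\overline{t_{*}}} = z] - \pi(z)| \le e^{-c\log^{2}N}$, so the sum above lies within $M\,e^{-c\log^{2}N}$ of $\overline{E}_{\pi}[H_{A_{1}}]$. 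Taking the supremum over $y$ and using that $\overline{t_{*}} + \overline{E}_{\pi}[H_{A_{1}}]$ is polynomially bounded in $N$ (so that the factor $(1 - e^{-c\log^{2}N})^{-1}$ contributes only super-polynomially small corrections), one obtains the uniform bound
\[
M \le \overline{t_{*}} + \overline{E}_{\pi}[H_{A_{1}}] + e^{-c'\log^{2}N}. \qquad (\star)
\]

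The claim (\ref{eq:fA1lb}) now follows from $(\star)$ combined with the polynomial lower bound $\overline{E}_{\pi}[H_{A_{1}}] \ge cN^{2+c'}$ in (\ref{eq:entrancelb}): since $\overline{t_{*}} = N^{2}\log^{2}N$, one has $\overline{t_{*}} \le N^{-c''}\,\overline{E}_{\pi}[H_{A_{1}}]$ and the super-polynomial term is even smaller, yielding $M \le (1 + N^{-c''})\overline{E}_{\pi}[H_{A_{1}}]$; for any $x \in U^{N}$ the trivial bound $\overline{E}_{x}[H_{A_{1}}] \le M$ then translates, via the definition (\ref{eq:fA1def}) of $f_{A_{1}}$, into $f_{A_{1}}(x) \ge -N^{-c''}$.

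For the second claim (\ref{eq:lbcalc1}), drop the piece on $\{H_{A_{1}} \le \overline{t_{*}}\}$ and apply the Markov property at $\overline{t_{*}}$ to write $\overline{E}_{x}[H_{A_{1}}] \ge \overline{E}_{x}[(H_{A_{1}} - \overline{t_{*}}); H_{A_{1}} > \overline{t_{*}}] = \overline{E}_{x}[\mathbf{1}_{H_{A_{1}} > \overline{t_{*}}}\,\overline{E}_{X_{\overline{t_{*}}}}[H_{A_{1}}]]$, and split this last expression as
\[
\sum_{z} \overline{P}_{x}[X_{\overline{t_{*}}} = z]\,\overline{E}_{z}[H_{A_{1}}] \;-\; \overline{E}_{x}\bigl[\mathbf{1}_{H_{A_{1}} \le \overline{t_{*}}}\,\overline{E}_{X_{\overline{t_{*}}}}[H_{A_{1}}]\bigr].
\]
The first sum is at least $\overline{E}_{\pi}[H_{A_{1}}] - M\,e^{-c\log^{2}N} \ge \overline{E}_{\pi}[H_{A_{1}}] - e^{-c''\log^{2}N}$ by the $L^{1}$-bound and $(\star)$; the second summand is bounded above by $M\,\overline{P}_{x}[H_{A_{1}} \le \overline{t_{*}}] \le (\overline{t_{*}} + \overline{E}_{\pi}[H_{A_{1}}])\,\overline{P}_{x}[H_{A_{1}} \le \overline{t_{*}}] + e^{-c''\log^{2}N}$, again by $(\star)$. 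Combining these two estimates and relabeling constants gives (\ref{eq:lbcalc1}). The only real obstacle is the self-referential nature of $(\star)$, which is defused because the spectral-gap mixing error at scale $\overline{t_{*}}$ is super-polynomially small and hence negligible against the polynomial scales of $\overline{t_{*}}$ and $\overline{E}_{\pi}[H_{A_{1}}]$.
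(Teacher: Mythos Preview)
Your argument follows the same line as the paper's: a Markov decomposition at time $\overline{t_*}$, the mixing estimate (\ref{eq:relaxation}) to swap $\overline{P}_y[X_{\overline{t_*}}=\cdot]$ for $\pi$, and a uniform control on $M=\max_y\overline{E}_y[H_{A_1}]$ to absorb the resulting error.

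The one substantive gap is your assertion that ``$\overline{t_*}+\overline{E}_\pi[H_{A_1}]$ is polynomially bounded in $N$''. At this stage only the \emph{lower} bound (\ref{eq:entrancelb}) on $\overline{E}_\pi[H_{A_1}]$ is available; the matching upper bound is Proposition~\ref{prop:entranceinvlb}, whose proof uses the very lemma you are establishing, so invoking it here would be circular. Your self-bound by itself only gives $M\le 2(\overline{t_*}+\overline{E}_\pi[H_{A_1}])$, which does not suffice to reduce $M\,e^{-c\log^2 N}$ to $e^{-c'\log^2 N}$ and hence to obtain $(\star)$ in the sharp form you state. The paper breaks this circularity by supplying an independent crude polynomial bound $M\le cN^{5+d}$ via the commute-time identity and an effective-resistance estimate along the paths $\gamma(\cdot,\cdot)$ of Section~2 (see (\ref{eq:hittingcrude-1})); once that a~priori bound is in hand, your proof and the paper's coincide. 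Note, incidentally, that (\ref{eq:fA1lb}) alone does not need $(\star)$ in full strength: dividing $M\le(\overline{t_*}+\overline{E}_\pi[H_{A_1}])/(1-e^{-c\log^2 N})$ by $\overline{E}_\pi[H_{A_1}]$ and using (\ref{eq:entrancelb}) already gives $M/\overline{E}_\pi[H_{A_1}]\le 1+N^{-c}$; it is the explicit $e^{-c'\log^2 N}$ error term in (\ref{eq:lbcalc1}) that forces the missing independent polynomial input.
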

\begin{proof}
As we now explain, to prove (\ref{eq:fA1lb}), it suffices to show
that 
\begin{equation}
\bigl|\overline{E}_{x}[\overline{E}_{X_{\overline{t_{*}}}}[H_{A_{1}}]]-\overline{E}_{\pi}[H_{A_{1}}]\bigr|\le e^{-c'\log^{2}N},\mbox{ for all }x\in U^{N}.\label{eq:Er2}
\end{equation}
Indeed, since $H_{A_{1}}\leq\overline{t_{*}}+H_{A_{1}}\circ\theta_{\overline{t_{*}}}$,
the simple Markov property applied at time $\overline{t_{*}}$ and
(\ref{eq:Er2}) imply that 
\begin{equation}
\sup_{x\in U^{N}}\overline{E}_{x}[H_{A_{1}}]\leq\overline{t_{*}}+e^{-c\log^{2}N}+\overline{E}_{\pi}[H_{A_{1}}].\label{Er3}
\end{equation}
It then follows that 
\begin{equation}
\begin{array}{cc}
\begin{array}{ccl}
\frac{\sup_{x\in U^{N}}\overline{E}_{x}[H_{A_{1}}]}{\overline{E}_{\pi}[H_{A_{1}}]}-1 & \overset{(\ref{Er3})}{\underset{(\ref{eq:entranceinvub})}{\leq}} & (\overline{t_{*}}+e^{-c\log^{2}N})c'\frac{\mathrm{cap}(A_{1})}{T_{N}}\\
 & \overset{(\ref{eq:capacityorder})}{\underset{(\ref{eq:TNdef})}{\leq}} & (\overline{t_{*}}+e^{-c\log^{2}N})c''N^{(d-2)r_{1}-d}\overset{(\ref{eq:regen})}{\underset{(\ref{eq:rchoice})}{\leq}}N^{-\widetilde{c}}.
\end{array}\end{array}\label{eq:tnsup}
\end{equation}
This proves (\ref{eq:fA1lb}). We now prove (\ref{eq:Er2}). Let us
consider the expectation of $H_{A_{1}}$ when started from $X_{\overline{t_{*}}}$.
We first note that for all $x\in U^{N}$, 
\begin{equation}
\begin{split}\bigl|\overline{E}_{x}[\overline{E}_{X_{\overline{t_{*}}}}[H_{A_{1}}]]-\overline{E}_{\pi}[H_{A_{1}}]\bigr|\leq\sum_{y\in U^{N}}|\overline{P}_{x}[X_{\overline{t_{*}}}=y]-\pi(y)|\sup_{y\in U^{N}}\overline{E}_{y}[H_{A_{1}}].\end{split}
\end{equation}
By the relaxation to equilibrium estimate (\ref{eq:relaxation}),
one has 
\begin{equation}
\sum_{y\in U^{N}}|\overline{P}_{x}[X_{\overline{t_{*}}}=y]-\pi(y)|\leq e^{-c\log^{2}N}\mbox{ for all }x\in U^{N}.
\end{equation}
Thus, to prove (\ref{eq:Er2}) it suffices to obtain a very crude
upper bound for the supremum of the expected entrance time in $A_{1}$
as the starting point varies in $U^{N}$: 
\begin{equation}
\overline{E}_{y}[H_{A_{1}}]\leq cN^{5+d}\quad\mbox{ for all }y\in U^{N}.\label{eq:hittingcrude-1}
\end{equation}
This follows, for example, by a corollary of the commute time identity
(see Corollary 4.28, p. 59 of \cite{Barlow}): 
\begin{equation}
\overline{E}_{y}[H_{A_{1}}]\leq r_{\mathrm{eff}}(y,A_{1})\pi(U^{N})\mbox{ for all }y\in U^{N},\label{eq:commuteiden}
\end{equation}
where $r_{{\rm eff}}(y,A_{1})$ stands for the effective resistance
between $y$ and $A_{1}$. On the one hand, by the third equality
of (\ref{eq:sg1}) and Claim 4. of (\ref{eq:cfwalkproperty}), for
all $x,y\in U^{N}$ such that $x\sim y$, we know that 
\begin{equation}
W(x,y)=\frac{1}{2d}\sqrt{\pi(x)\pi(y)}\in(cN^{-(4+d)},1],\label{eq:Wboundary}
\end{equation}
hence the resistance on $\{p,q\}$ does not exceed $cN^{4+d}$. We
know that for any $y$ in $U^{N}$, for some $x\in\partial_{i}A_{1}$,
the effective resistance between $y$ and $x$ (which we denote by
$r_{{\rm eff}}(y,x)$) is less or equal to the effective resistance
between $y$ and $x$ on the the path $\gamma(y,x)$ (which we denote
by $r_{{\rm eff}}^{\gamma}(y,x)$) defined above Proposition \ref{prop:spectralgap}
(note that $\gamma(y,x)$ is a subgraph of $U^{N}$). Since by (\ref{eq:sg2})
$\gamma(y,x)$ is of length no more than $cN$, $r_{{\rm eff}}^{\gamma}(y,x)$
does not exceed $c'N^{5+d}$ by (\ref{eq:Wboundary}). Hence, we obtain
that
\begin{equation}
r_{{\rm eff}}(y,A_{1})\leq r_{{\rm eff}}(y,x)\leq r_{{\rm eff}}^{\gamma}(y,x)\leq cN^{5+d}.\label{eq:reffub}
\end{equation}
On the other hand, one has $\pi(U^{N})=1$ (by claim 1. of (\ref{eq:cfwalkproperty})).
Thus, (\ref{eq:commuteiden}) and (\ref{eq:reffub}) yield that
\begin{equation}
\sup_{y\in U^{N}}\overline{E}_{y}[H_{A_{1}}]\leq cN^{5+d}.
\end{equation}
This completes the proof of (\ref{eq:hittingcrude-1}) and hence of
(\ref{eq:fA1lb}). 

We now turn to (\ref{eq:lbcalc1}). We consider any $x\in D$. By
the simple Markov property applied at time $\overline{t_{*}}$, we
find that 
\begin{eqnarray}
\overline{E}_{x}[H_{A_{1}}] & \geq & \overline{E}_{x}[\mathbf{1}_{\{H_{A_{1}}>\overline{t_{*}}\}}\overline{E}_{X_{\overline{t_{*}}}}[H_{A_{1}}]]=\overline{E}_{x}[\overline{E}_{X_{\overline{t_{*}}}}[H_{A_{1}}]]-E_{x}[\mathbf{1}_{\{H_{A_{1}}\le\overline{t_{*}}\}}\overline{E}_{X_{\overline{t_{*}}}}[H_{A_{1}}]]\nonumber \\
 & \stackrel{(\ref{eq:Er2})}{\geq} & \overline{E}_{\pi}[H_{A_{1}}]-e^{-c\log^{2}N}-\overline{P}_{x}[H_{A_{1}}\le\overline{t_{*}}]\sup_{y\in U^{N}}\overline{E}_{y}[H_{A_{1}}]\\
 & \stackrel{(\ref{Er3})}{\geq} & \overline{E}_{\pi}[H_{A_{1}}]-e^{-c'\log^{2}N}-\overline{P}_{x}[H_{A_{1}}\le\overline{t_{*}}](\overline{t_{*}}+\overline{E}_{\pi}[H_{A_{1}}]).\nonumber 
\end{eqnarray}
This proves (\ref{eq:lbcalc1}) and finishes Lemma \ref{lem:fA1lb}.
\end{proof}
We now prove the second main estimate.
\begin{prop}
\label{prop:entranceinvlb}For large $N$, one has that
\begin{equation}
\frac{1}{\overline{E}_{\pi}[H_{A_{1}}]}\geq(1-N^{-c})\frac{\mathrm{cap}(A_{1})}{T_{N}}u_{**}(1+\epsilon).\label{eq:entranceinvlb}
\end{equation}
\end{prop}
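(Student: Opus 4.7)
The plan is to apply the left inequality in Lemma~\ref{lem:entranceinvest}, namely
\[
\frac{1}{\overline{E}_{\pi}[H_{A_{1}}]}\geq\overline{\mathcal{E}}(g,g)\bigl(1-2\sup_{x\in D}|f_{A_{1}}(x)|\bigr),
\]
and combine it with the lower bound $\overline{\mathcal{E}}(g,g)\ge\frac{\mathrm{cap}(A_{1})}{T_{N}}u_{**}(1+\epsilon)$ from the left-hand side of (\ref{eq:Dirichletub-1}). What is left to prove is thus the uniform estimate
\[
\sup_{x\in D}|f_{A_{1}}(x)|\leq N^{-c},
\]
for some positive constant $c$.

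The negative side $f_{A_{1}}(x)\ge-N^{-c}$ on all of $U^{N}$ is precisely (\ref{eq:fA1lb}) and requires no further work. For the positive side I would invoke (\ref{eq:lbcalc1}) from Lemma~\ref{lem:fA1lb}: for every $x\in D$,
\[
\overline{E}_{x}[H_{A_{1}}]\geq\overline{E}_{\pi}[H_{A_{1}}]-e^{-c'\log^{2}N}-\overline{P}_{x}[H_{A_{1}}\le\overline{t_{*}}]\bigl(\overline{t_{*}}+\overline{E}_{\pi}[H_{A_{1}}]\bigr).
\]
Dividing through by $\overline{E}_{\pi}[H_{A_{1}}]$ and using $f_{A_{1}}(x)=1-\overline{E}_{x}[H_{A_{1}}]/\overline{E}_{\pi}[H_{A_{1}}]$, one gets
\[
f_{A_{1}}(x)\le\frac{e^{-c'\log^{2}N}}{\overline{E}_{\pi}[H_{A_{1}}]}+\overline{P}_{x}[H_{A_{1}}\le\overline{t_{*}}]\Bigl(\frac{\overline{t_{*}}}{\overline{E}_{\pi}[H_{A_{1}}]}+1\Bigr).
\]
Now (\ref{eq:entrancelb}) provides $\overline{E}_{\pi}[H_{A_{1}}]\ge cN^{2+c'}$, the definition (\ref{eq:regen}) gives $\overline{t_{*}}=N^{2}\log^{2}N$, and (\ref{eq:hittingtimebig}) yields $\sup_{x\in D}\overline{P}_{x}[H_{A_{1}}\le\overline{t_{*}}]\le N^{-c}$. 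Each of the two contributions on the right-hand side is therefore bounded by a negative power of $N$, giving the desired two-sided estimate on $f_{A_1}$.

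Substituting this into the left inequality of Lemma~\ref{lem:entranceinvest} and using (\ref{eq:Dirichletub-1}) one arrives at
\[
\frac{1}{\overline{E}_{\pi}[H_{A_{1}}]}\geq(1-2N^{-c})\,\overline{\mathcal{E}}(g,g)\geq(1-N^{-c''})\frac{\mathrm{cap}(A_{1})}{T_{N}}u_{**}(1+\epsilon),
\]
which is (\ref{eq:entranceinvlb}) after absorbing the constants. I do not expect any real obstacle: all three ingredients (the variational sandwich of Lemma~\ref{lem:entranceinvest}, the Dirichlet-form computation for $g$, and the fluctuation bounds from Lemma~\ref{lem:fA1lb}) are already in place. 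The only quantitative point to watch is that the polynomial gain $N^{-c}$ in the hitting-time estimate (\ref{eq:hittingtimebig}) must dominate the ratio $\overline{t_{*}}/\overline{E}_{\pi}[H_{A_{1}}]$, which follows from (\ref{eq:entrancelb}) thanks to the choice $r_{1}<1$ in (\ref{eq:rchoice}) and the fact that $T_{N}\asymp N^{d}$ by claim~5 of (\ref{eq:hNproperty}).
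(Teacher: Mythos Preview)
Your proposal is correct and follows essentially the same route as the paper: apply the left inequality of Lemma~\ref{lem:entranceinvest} together with the lower bound in (\ref{eq:Dirichletub-1}), and reduce to the fluctuation estimate $\sup_{x\in D}|f_{A_{1}}(x)|\le N^{-c}$, handling the negative part via (\ref{eq:fA1lb}) and the positive part by dividing (\ref{eq:lbcalc1}) through by $\overline{E}_{\pi}[H_{A_{1}}]$ and invoking (\ref{eq:entrancelb}), (\ref{eq:regen}), (\ref{eq:hittingtimebig}). The paper's proof is identical in structure and in the ingredients used.
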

\begin{proof}
By applying (\ref{eq:Dirichletub-1}) and the left-hand inequality
of (\ref{eq:entranceinvest}), for large $N$, one has, 
\begin{eqnarray}
\frac{1}{\overline{E}_{\pi}[H_{A_{1}}]} & \geq & \bigl(1-2\sup_{x\in D}|f_{A_{1}}(x)|\bigr)\frac{\mathrm{cap}(A_{1})}{T_{N}}u_{**}(1+\epsilon).\label{eq:entrancelbrework}
\end{eqnarray}
Thus, with (\ref{eq:fA1lb}) in mind, to prove (\ref{eq:entranceinvlb}),
it suffices to show that for large $N$, 
\begin{equation}
\sup_{x\in D}f_{A_{1}}(x)\leq N^{-c}.\label{eq:fluctupperbd}
\end{equation}
Dividing by $\overline{E}_{\pi}[H_{A_{1}}]$ on both sides of (\ref{eq:lbcalc1})
and taking the infimum over all $x\in D$, one obtains
\begin{align}
\begin{split}\inf_{x\in D}\frac{\overline{E}_{x}[H_{A_{1}}]}{\overline{E}_{\pi}[H_{A_{1}}]} & \overset{(\ref{eq:regen})}{\geq}1-\frac{e^{-c'\log^{2}N}}{\overline{E}_{\pi}[H_{A_{1}}]}-\sup_{x\in D}\overline{P}_{x}[H_{A_{1}}\leq\overline{t_{*}}]\Bigl(\frac{N^{2}\log^{2}N}{\overline{E}_{\pi}[H_{A_{1}}]}+1\Bigr)\\
 & \overset{(\ref{eq:entrancelb})}{\underset{(\ref{eq:hittingtimebig})}{\geq}}1-e^{-\overline{c}'\log^{2}N}-N^{-\widetilde{c}'}\Bigl(c''(\log N)^{2}N^{-\widetilde{c}}+1\Bigr)\geq1-N^{-c}.
\end{split}
\end{align}
 Together with (\ref{eq:entrancelbrework}) this proves (\ref{eq:fluctupperbd})
as well as (\ref{eq:entranceinvlb}).\end{proof}
\begin{rem}
\label{almost there}The combination of Propositions \ref{prop:entranceinvup}
and \ref{prop:entranceinvlb} forms a pair of asymptotically tight
bounds on $\overline{E}_{\pi}[H_{A_{1}}]$, namely 
\begin{equation}
(1-N^{-c})\frac{\mathrm{cap}(A_{1})}{T_{N}}u_{**}(1+\epsilon)\leq\frac{1}{\overline{E}_{\pi}[H_{A_{1}}]}\leq(1+N^{-c})\frac{\mathrm{cap}(A_{1})}{T_{N}}u_{**}(1+\epsilon).
\end{equation}

\end{rem}

\section{Quasi-stationary measure}

In this section we introduce the quasi-stationary distribution (abbreviated
below as q.s.d.) induced on $D$ (recall that $D$ is defined in (\ref{eq:Ddef}))
and collect some of its properties. This will help us show in the
next section that carefully chopped sections of the confined random
walk are approximately independent, allowing us to bring into play
excursions of random walk and furthermore random interlacements. In
Proposition \ref{prop:qsdcp} we prove that the q.s.d. on $D$ is
an appropriate approximation of the stationary distribution of the
random walk conditioned to stay in $D$. In Proposition \ref{prop:qsdem}
we show that the hitting distribution of $A_{1}$ of the confined
walk starting from the q.s.d. on $D$ is very close to the normalized
equilibrium measure of $A_{1}$. In this section the constants tacitly
depend on $\delta$, $\eta$, $\epsilon$ and $R$ (see (\ref{eq:deltadef})
and (\ref{eq:Rr})), $r_{1}$, $r_{2}$, $r_{3}$, $r_{4}$ and $r_{5}$
(see (\ref{eq:rchoice})). 

We fix the choice of $A_{1}$ and $A_{2}$ as in the last section
(see (\ref{eq:boxdef})). The arguments in Lemma 4.2, Propositions
4.3 and 4.4 below are similar to those of Section 3.2 and the Appendix
of \cite{TeiWin}. However, in our set-up, special care is needed
due to the fact that the stationary measure is massively non-uniform
in the present context.

\vspace{0.3cm}

We now define the q.s.d. on $D(=U^{N}\backslash A_{2})$. We denote
by $\{H_{t}^{D}\}_{t\geq0}$ the semi-group of $\{\overline{P}_{x}\}_{x\in U^{N}}$
killed outside $D$, so that for all $f\in U^{N}\to\mathbb{R}$ 
\begin{equation}
H_{t}^{D}f(x)=\overline{E}_{x}[f(X_{t}),\, H_{A_{2}}>t].
\end{equation}
We denote by $L^{D}$ the generator of $\{H_{t}^{D}\}_{t\geq0}$.
It is classical fact that for $f:D\to\mathbb{R}$, 
\begin{equation}
L^{D}f(x)=\widetilde{L}\widetilde{f}(x)\qquad\forall x\in D,
\end{equation}
where $\widetilde{f}$ is the extension of $f$ to $U^{N}$ vanishing
outside $D$ and $\widetilde{L}$ (defined in (\ref{eq:tiltedgenerator}))
is the generator for the tilted walk. We denote by $\pi^{D}$ the
restriction of the measure $\pi$ onto $D$. So, $\{H_{t}^{D}\}_{t\geq0}$
and $L^{D}$ are self-adjoint in $l^{2}(\pi^{D})$ and 
\begin{equation}
H_{t}^{D}=e^{tL^{D}}.
\end{equation}
 We then denote by $\lambda_{i}^{D},i=1,\cdots,|D|$, with 
\begin{equation}
0\leq\lambda_{i}^{D}\leq\lambda_{i+1}^{D},\qquad i=1,\cdots,|D|-1,\label{eq:liDorder}
\end{equation}
the eigenvalues of $-L^{D}$ and by $f_{i}$, $i=1,\cdots,|D|$, an
$l^{2}(\pi^{D})$-orthonormal basis of eigenfunctions associated to
$\lambda_{i}$. Because $D$ is connected, by the Perron-Frobenius
theorem, all entries of $f_{1}$ are positive. The quasi-stationary
distribution on $D$ is the probability measure on $D$ with density
with respect to $\pi^{d}$ proportional to $f_{1}$, i.e., 
\begin{equation}
\sigma(y)=\frac{(f_{1},\delta_{y})_{l^{2}(\pi^{D})}}{(f_{1},\mathbf{1})_{l^{2}(\pi^{D})}},\qquad x\in D,\label{eq:quasidef}
\end{equation}
where, for $y\in D$, $\delta_{y}:D\to\mathbb{R}$ is the point mass
function at $y$. It is known that the q.s.d. on $D$ is the limit
distribution of the walk conditioned on never entering $A_{2}$, i.e.,
for all $x,y\in D$, one has (see (6.6.3), p. 91 of \cite{Keilson}),
\begin{equation}
\sigma(y)=\lim_{t\to\infty}\overline{P}_{x}[Y_{t}=y\big|H_{A_{2}}>t].\label{eq:qsdlim}
\end{equation}

We now prove a lemma which is useful in the proof of Proposition \ref{prop:lbsigma}
below.
\begin{lem}
\label{lem:comparison}For all $x,y\in D$, one has that
\begin{equation}
\sigma(y)\geq N^{-c}\overline{P}_{y}[H_{x}<H_{A_{2}}]\sigma(x).\label{eq:sigmacomparison}
\end{equation}
\end{lem}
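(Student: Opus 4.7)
The plan is to exploit the Perron--Frobenius representation of $\sigma$ in terms of $f_1$, and then to bound $f_1(y)/f_1(x)$ from below using a hitting-time argument based on the fact that $e^{\lambda_1^D t}f_1(X_t)\mathbf{1}_{\{H_{A_2}>t\}}$ is a non-negative martingale under $\overline{P}_\cdot$.

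First, rewrite $\sigma$ explicitly. By the definition (\ref{eq:quasidef}) of the q.s.d.\ and the reversibility of $\{H_t^D\}_{t\ge 0}$ in $l^2(\pi^D)$, one has
\begin{equation}
\sigma(z)=\frac{\pi(z)f_1(z)}{\sum_{w\in D}\pi(w)f_1(w)},\qquad z\in D,
\end{equation}
so that the desired inequality (\ref{eq:sigmacomparison}) reduces to
\begin{equation}
\frac{\pi(y)}{\pi(x)}\,\frac{f_1(y)}{f_1(x)}\;\ge\;N^{-c}\,\overline{P}_y[H_x<H_{A_2}].
\end{equation}
The ratio $\pi(y)/\pi(x)$ is bounded below by $cN^{-4}$ from claim 4 of (\ref{eq:cfwalkproperty}) (since both are squeezed between $cN^{-d-4}$ and $c'N^{-d}$), so it is enough to establish the eigenfunction comparison
\begin{equation}
f_1(y)\;\ge\;f_1(x)\,\overline{P}_y[H_x<H_{A_2}].\label{eq:f1comp}
\end{equation}

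To obtain (\ref{eq:f1comp}), I would use that $H_t^Df_1=e^{-\lambda_1^Dt}f_1$ and hence
\begin{equation}
f_1(y)=e^{\lambda_1^Dt}\,\overline{E}_y\bigl[f_1(X_t)\mathbf{1}_{\{H_{A_2}>t\}}\bigr],\qquad t\ge 0.
\end{equation}
Restricting the expectation to the event $\{H_x\le t<H_{A_2}\}\subset\{H_x<H_{A_2}\}$ and applying the strong Markov property of the killed chain at the stopping time $H_x$ gives
\begin{equation}
f_1(y)\;\ge\;e^{\lambda_1^Dt}\,\overline{E}_y\!\left[\mathbf{1}_{\{H_x\le t<H_{A_2}\}}\,\overline{E}_x\!\bigl[f_1(X_{t-H_x})\mathbf{1}_{\{H_{A_2}>t-H_x\}}\bigr]\right]=f_1(x)\,\overline{E}_y\!\left[e^{\lambda_1^DH_x}\mathbf{1}_{\{H_x\le t<H_{A_2}\}}\right].
\end{equation}
Since $\lambda_1^D\ge 0$, the integrand is at least $\mathbf{1}_{\{H_x\le t<H_{A_2}\}}$; letting $t\to\infty$ and using monotone convergence together with $\{H_x<H_{A_2}\}=\bigcup_{t}\{H_x\le t<H_{A_2}\}$ yields (\ref{eq:f1comp}).

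Combining (\ref{eq:f1comp}) with the bound $\pi(y)/\pi(x)\ge c''N^{-4}$ furnishes (\ref{eq:sigmacomparison}) with some constant $c$ depending only on $d$. I expect no serious obstacle: the only subtlety is making the strong Markov application at $H_x$ rigorous, which is standard since $H_x\wedge t$ is a bounded stopping time and $f_1$ is bounded on the finite set $D$.
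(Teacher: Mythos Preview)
Your argument is sound and in fact slightly more direct than the paper's. Both proofs use the bound $\pi(y)/\pi(x)\ge cN^{-4}$ from claim~4 of (\ref{eq:cfwalkproperty}); the difference lies in how the remaining factor is handled. The paper works with the limit representation $\sigma(y)=\lim_{t\to\infty}\overline P_x[X_t=y\mid H_{A_2}>t]$, uses self-adjointness of the killed semigroup to swap $x$ and $y$, and bounds the survival ratio $\overline P_y[H_{A_2}>t]/\overline P_x[H_{A_2}>t]$ from below by $\overline P_y[H_x<H_{A_2}]$ via strong Markov before letting $t\to\infty$. You instead go straight to the Perron--Frobenius eigenfunction and prove $f_1(y)\ge f_1(x)\,\overline P_y[H_x<H_{A_2}]$ from the martingale property of $e^{\lambda_1^D t}f_1(X_t)\mathbf 1_{\{H_{A_2}>t\}}$. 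Since $\overline P_z[H_{A_2}>t]\sim C e^{-\lambda_1^D t}f_1(z)$ as $t\to\infty$, the two inequalities are really the same statement in different guises; your route simply avoids the detour through conditioned transition probabilities.

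One bookkeeping correction is needed. After the strong Markov step at $H_x$, the outer indicator should read $\mathbf 1_{\{H_x\le t,\ H_x<H_{A_2}\}}$ rather than $\mathbf 1_{\{H_x\le t<H_{A_2}\}}$, because the post-$H_x$ survival condition $\{H_{A_2}\circ\theta_{H_x}>t-H_x\}$ has already been absorbed into the inner expectation. Your displayed chain of inequalities remains true as written (the set $\{H_x\le t<H_{A_2}\}$ is smaller, so you only discarded a nonnegative piece), but the final monotone-convergence claim then fails: the sets $\{H_x\le t<H_{A_2}\}$ are not increasing in $t$, and their indicator tends to $0$ a.s.\ since $H_{A_2}<\infty$ on the finite irreducible chain. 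With the corrected indicator the events do increase to $\{H_x<H_{A_2}\}$ as $t\to\infty$, and the eigenfunction comparison follows by monotone convergence exactly as you intended. Equivalently, one may apply optional stopping to the martingale at the bounded time $H_x\wedge t$ and drop the nonnegative contribution from $\{H_x>t\}$.
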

\begin{proof}
By the $l^{2}(\pi^{D})$-self-adjointness of the killed semi-group
$(H_{t}^{D})_{t\geq0}$, we have that for all $x,y\in D$, $t>0$,
\begin{equation}
\overline{P}_{x}[X_{t}=y\big|H_{A_{2}}>t]=\overline{P}_{y}[X_{t}=x\big|H_{A_{2}}>t]\frac{\pi(y)}{\pi(x)}\frac{\overline{P}_{y}[H_{A_{2}}>t]}{\overline{P}_{x}[H_{A_{2}}>t]}.\label{eq:xxprimeexchange}
\end{equation}
On the one hand, by the strong Markov property applied at time $H_{x}$,
we know that for all $x,y\in D$, 
\begin{equation}
\overline{P}_{y}[H_{A_{2}}>t]\geq\overline{P}_{y}[H_{x}<H_{A_{2}}]\overline{P}_{x}[H_{A_{2}}>t],\label{eq:entrancetransfer}
\end{equation}
On the other hand, by claim 4. of (\ref{eq:cfwalkproperty}), we know
that for all $x,y\in D$, $t>0$, 
\begin{equation}
\frac{\pi(y)}{\pi(x)}\geq cN^{-4}.\label{eq:piratio}
\end{equation}
Thus, the claim (\ref{eq:sigmacomparison}) follows by taking limits
in $t$ on both sides of (\ref{eq:xxprimeexchange}) and incorporating
(\ref{eq:piratio}) and (\ref{eq:entrancetransfer}).
\end{proof}
The next lemma is also a preparation for Proposition \ref{prop:lbsigma}.
\begin{lem}
For all $x\in D\backslash A_{4}$, one has 
\begin{equation}
\max_{y\in\partial A_{3}}\overline{P}_{y}[H_{x}<H_{A_{2}}]\geq N^{-c}.\label{eq:xppentrance}
\end{equation}
\end{lem}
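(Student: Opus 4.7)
The plan is to combine a pigeonhole argument from $x$ with reversibility to transfer the resulting bound to a start at $y\in\partial A_3$. The key geometric observation is that since $x\in D\setminus A_4\subset U^N\setminus A_3$ and $A_2\subset A_3$, every trajectory from $x$ that reaches $A_2$ must first cross $\partial A_3$; and since the confined walk is irreducible on the finite set $U^N$, $\overline{P}_x[H_{A_2}<\infty]=1$, whence
$$\overline{P}_x[H_{\partial A_3}<H_{A_2}]=1.$$

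First I would apply a union bound in the ``forward'' direction. Since $\sum_{y\in\partial A_3}\mathbf{1}_{H_y<H_{A_2}}\geq\mathbf{1}_{H_{\partial A_3}<H_{A_2}}$, taking $\overline{P}_x$-expectation yields
$$\sum_{y\in\partial A_3}\overline{P}_x[H_y<H_{A_2}]\geq 1,$$
and since $|\partial A_3|\leq cN^{(d-1)r_3}$, there must exist $y^{*}\in\partial A_3$ with $\overline{P}_x[H_{y^{*}}<H_{A_2}]\geq cN^{-(d-1)r_3}$.

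Next I would reverse the direction of the probability. By claim 1 of (\ref{eq:cfwalkproperty}), $\pi$ is reversible for the confined walk, so the killed semi-group $\{H^D_t\}$ is $l^2(\pi^D)$-self-adjoint (as recorded just before (\ref{eq:liDorder})), and the killed Green function $G^D$ satisfies $\pi(x)G^D(x,y)=\pi(y)G^D(y,x)$. Combined with the strong Markov identities $G^D(x,y)=\overline{P}_x[H_y<H_{A_2}]\,G^D(y,y)$ and its mirror, this yields
$$\overline{P}_y[H_x<H_{A_2}]=\overline{P}_x[H_y<H_{A_2}]\cdot\frac{\pi(x)\,G^D(y,y)}{\pi(y)\,G^D(x,x)}.$$

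It then suffices to control the correction factor polynomially in $N$. By claim 4 of (\ref{eq:cfwalkproperty}), $\pi(x)/\pi(y)\geq cN^{-4}$. For $G^D(y,y)$, the total jump rate at $y$ is $\lambda_y=\sum_{z\sim y,\,z\in U^N}(2d)^{-1}f(z)/f(y)\leq cN^2$ by (\ref{eq:tiltedgenerator}) together with claim 1 of (\ref{eq:hNproperty}), so the expected initial holding time at $y$ gives $G^D(y,y)\geq 1/\lambda_y\geq cN^{-2}$. For $G^D(x,x)$, the crude bound $G^D(x,x)\leq\overline{E}_x[H_{A_2}]$ combined with the commute-time/effective-resistance argument used in (\ref{eq:hittingcrude-1})--(\ref{eq:reffub}) (applied with $A_2$ in place of $A_1$) produces $G^D(x,x)\leq cN^{5+d}$. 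Combining all of these,
$$\max_{y\in\partial A_3}\overline{P}_y[H_x<H_{A_2}]\geq cN^{-(d-1)r_3}\cdot cN^{-4}\cdot cN^{-(7+d)}\geq N^{-c},$$
which is the desired estimate.

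The main obstacle is the polynomial two-sided control of the ratio $G^D(y,y)/G^D(x,x)$: although each of the single-step holding-time lower bound for $G^D(y,y)$ and the commute-time upper bound for $G^D(x,x)$ is fairly standard, extra care is required because of the massively non-uniform stationary measure $\pi$, which causes edge weights, holding rates and effective resistances all to vary polynomially in $N$, even between neighbouring sites. One has to ensure that the various polynomial losses combine to give a final exponent $c$ depending only on the parameters $(\delta,\eta,\epsilon,R,r_1,\ldots,r_5)$.
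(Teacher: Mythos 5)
Your proposal is correct, but it follows a genuinely different route from the paper. The paper first reduces the claim to a lower bound on $\max_{y'\in\partial A_{2}}\overline{P}_{y'}[H_{x}<H_{A_{2}}]$ (via the Markov property at $T_{A_{3}}$: a path from $\partial A_{2}$ to $x$ avoiding $A_{2}$ must first exit $A_{3}$), then collapses $A_{2}$ to a single vertex $a$ and invokes the classical escape-probability formula $P_{a}^{{\rm col}}[H_{x}<\widetilde{H}_{a}]=(w_{a}r^{{\rm col}}(a,x))^{-1}$, bounding $w_{a}$ and the effective resistance $r^{{\rm col}}(a,x)$ polynomially via (\ref{eq:Wboundary}) and a path of length $O(N)$. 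You instead run a pigeonhole argument started from $x$ (using that the walk a.s. hits $\partial A_{3}$ strictly before $A_{2}$, since $x\notin A_{4}\supset A_{3}\supset A_{2}$, and $|\partial A_{3}|\leq cN^{(d-1)r_{3}}$), and then reverse the hitting probability via the killed Green function identity $\pi(x)G^{D}(x,y)=\pi(y)G^{D}(y,x)$ together with $G^{D}(x,y)=\overline{P}_{x}[H_{y}<H_{A_{2}}]G^{D}(y,y)$. All the ingredients you use are available in the paper: claim 4 of (\ref{eq:cfwalkproperty}) for $\pi(x)/\pi(y)$, claim 1 of (\ref{eq:hNproperty}) for the holding-rate bound giving $G^{D}(y,y)\geq cN^{-2}$, and the commute-time/effective-resistance argument of (\ref{eq:commuteiden})--(\ref{eq:reffub}) (with $A_{2}$ in place of $A_{1}$) for $G^{D}(x,x)\leq\overline{E}_{x}[H_{A_{2}}]\leq cN^{5+d}$. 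Both approaches rest on reversibility plus polynomial control of the non-uniform weights; the paper's is slightly more economical in that it never introduces the Green function, working directly with the escape probability from the collapsed vertex, whereas yours trades that for a transparent time-reversal identity that is very much in the spirit of the proof of Lemma \ref{lem:comparison}. Since only a bound of the form $N^{-c}$ is needed, the extra polynomial losses in your argument are harmless.
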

\begin{proof}
We fix an $x\in D\backslash A_{4}$ in the proof. Applying the Markov
property at time $T_{A_{3}}$ under $\overline{P}_{y'}$ for $y'\in\partial A_{2}$,
we see that 
\begin{equation}
\max_{y'\in\partial A_{2}}\overline{P}_{y'}[H_{x}<H_{A_{2}}]=\max_{y'\in\partial A_{2}}\overline{P}_{y'}[T_{A_{3}}<H_{x}<H_{A_{2}}]\leq\max_{y\in\partial A_{3}}\overline{P}_{y}[H_{x}<H_{A_{2}}].\label{eq:qsdlbc2}
\end{equation}
We now develop a lower bound on the left-hand side of (\ref{eq:qsdlbc2})
via effective resistance estimates. We denote by $U^{{\rm col}}$
the graph obtained by collapsing $A_{2}$ into a single vertex $a$
in $U^{N}$. With some abuse of notation we use the same symbol for
the vertices in $U^{{\rm col}}$ as in $U^{N}$ except for $a$. We
denote by $W^{{\rm col}}:U^{{\rm col}}\times U^{{\rm col}}\to\mathbb{R}^{+}$
the induced edge-weight. Let 
\begin{equation}
w_{a}=\sum_{y\in\partial A_{2}}W^{{\rm col}}(a,y)=\sum_{z\in A_{2},y\in\partial A_{2},z\sim y}W(z,y)\label{eq:wadef}
\end{equation}
be the sum of the weights of edges that touch $a$ in $U^{{\rm col}}$.
We denote by $\{P_{z}^{{\rm col}}\}_{z\in U^{{\rm col}}}$ the discrete-time
reversible Markov chain with edge-weight $W^{{\rm col}}$. The reversible
measure of this Markov chain $\pi^{{\rm col}}$ is given through 
\begin{equation}
\pi^{{\rm col}}(z)=\begin{cases}
w_{a} & z=a\\
\sum_{y\sim z}W(z,y) & \mbox{otherwise.}
\end{cases}
\end{equation}
Then we have 
\begin{equation}
\max_{y'\in\partial A_{2}}\overline{P}_{y'}[H_{x}<H_{A_{2}}]=\max_{y'\in\partial A_{2}}P_{y'}^{{\rm col}}[H_{x}<H_{a}]\geq P_{a}^{{\rm col}}[H_{x}<\widetilde{H}_{a}],\label{eq:qsdlbc3}
\end{equation}
By a classical result on electrical networks (see Proposition 3.10,
p. 69 of \cite{AF}), the escape probability in the right-hand side
of (\ref{eq:qsdlbc3}) equals 
\begin{equation}
P_{a}^{{\rm col}}[H_{x}<\widetilde{H}_{a}]=\big(w_{a}r^{{\rm col}}(a,x)\big)^{-1},\label{eq:qsdlbc35}
\end{equation}
where $r^{{\rm col}}(a,x)$ is the effective resistance between $a$
and $x$ on $U^{{\rm col}}$. We know that $r^{{\rm col}}(a,x)$ is
smaller or equal to the effective resistance between $a$ and $x$
along a path between $a$ and $x$ of length no more than $cN$ and
along this path the edge-weight is no less than $N^{-c}$ by (\ref{eq:Wboundary}).
Hence, we obtain that 
\begin{equation}
r^{{\rm col}}(a,x)\leq N^{c}.\label{eq:reffub-1}
\end{equation}
Moreover, we know that 
\begin{equation}
w_{a}=\sum_{z\in A_{2},y\in\partial A_{2},z\sim y}W(z,y)\leq N^{c}\max_{z\in D}W(z,y)\overset{(\ref{eq:Wboundary})}{\leq}N^{c}.\label{eq:waub}
\end{equation}
Therefore, we conclude from (\ref{eq:qsdlbc35}), (\ref{eq:reffub-1})
and (\ref{eq:waub}) that 
\begin{equation}
P_{a}^{{\rm col}}[H_{x}<\widetilde{H}_{a}]\geq N^{-c'}.\label{eq:qsdlbc4}
\end{equation}
The claim (\ref{eq:xppentrance}) follows by collecting (\ref{eq:qsdlbc2}),
(\ref{eq:qsdlbc3}) and (\ref{eq:qsdlbc4}). 
\end{proof}
The next proposition is a crucial estimate for us, showing that $\sigma$
is not too small at any point in $D$. This fact will be used in Proposition
\ref{prop:qsdcp}. In the proof we mainly rely on the reversibility
of the confined walk, hitting probability estimates of simple random
walk, and the Harnack principle.
\begin{prop}
\label{prop:lbsigma}For large $N$, one has the following lower bound
\begin{equation}
\inf_{x\in D}\sigma(x)\geq N^{-c},\label{eq:lowerboundsigma}
\end{equation}
and for all $x\in D$, 
\begin{equation}
N^{c'}\geq f_{1}(x)\geq N^{-c''}.\label{eq:lowerboundf}
\end{equation}
\end{prop}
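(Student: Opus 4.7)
The upper bound $f_1(x)\leq N^{c'}$ in (\ref{eq:lowerboundf}) is immediate: from the $l^2(\pi^D)$-normalization $\sum_y f_1(y)^2\pi(y)=1$ and the pointwise estimate $\pi(x)\geq cN^{-d-4}$ of Claim 4 of (\ref{eq:cfwalkproperty}) we obtain $f_1(x)^2\leq\pi(x)^{-1}\leq c'N^{d+4}$.

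The substance is the lower bound (\ref{eq:lowerboundsigma}). The plan is to locate a ``seed'' point $x^{\star}\in D\setminus A_5$ sitting at distance of order $N$ from both $A_2$ and $\partial U^N$, at which $\sigma(x^{\star})\geq N^{-c}$, and then propagate this bound to every $y\in D$ by invoking Lemma \ref{lem:comparison}:
\begin{equation*}
\sigma(y)\;\geq\; N^{-c}\,\overline{P}_y[H_{x^{\star}}<H_{A_2}]\,\sigma(x^{\star}).
\end{equation*}
For the seed, a pigeonhole argument on the probability measure $\sigma$, which is supported on $D$ of cardinality at most $cN^d$, furnishes some $x_0\in D$ with $\sigma(x_0)\geq c'N^{-d}$. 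Applying Lemma \ref{lem:comparison} twice---once from $x_0$ to a point $y_0\in\partial A_3$ realizing the maximum in (\ref{eq:xppentrance}), and once from that $y_0$ to $x^{\star}$---relocates the bound to the desired $x^{\star}$.

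The main obstacle is the uniform-in-$y$ lower bound
\begin{equation*}
\overline{P}_y[H_{x^{\star}}<H_{A_2}]\;\geq\;N^{-c}\qquad\text{for all }y\in D.
\end{equation*}
This we propose to obtain by adapting the electrical-network argument used to prove (\ref{eq:xppentrance}) in (\ref{eq:qsdlbc2})--(\ref{eq:qsdlbc4}): collapse $A_2$ to a single vertex, express the escape probability through the effective resistance between $y$ and $x^{\star}$ in the resulting graph, and bound this resistance by the resistance along an explicit path of length $O(N)$ from $y$ to $x^{\star}$ avoiding $A_2$, whose edges all carry weight at least $N^{-c}$ by (\ref{eq:Wboundary}). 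Two regions call for extra care: for $y$ close to $A_2$, a standard gambler's-ruin estimate for the simple random walk (with which the tilted walk coincides in a neighbourhood of $A_2$, by (\ref{eq:coincidestopped})) shows that the walk reaches $\partial A_3$ before $A_2$ with probability of constant order, after which the bulk argument applies; for $y$ close to $\partial U^N$, the Harnack principle for the tilted walk---whose generator is a uniformly elliptic perturbation of the discrete Laplacian on mesoscopic scales, thanks to Claim 1 of (\ref{eq:thproperty})---transfers the bound from the bulk to a tubular neighbourhood of the outer boundary.

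The pointwise lower bound on $f_1$ in (\ref{eq:lowerboundf}) then follows from the identity $f_1(y)=\sigma(y)(f_1,\mathbf{1})_{l^2(\pi^D)}/\pi(y)$, the upper bound $\pi(y)\leq c'N^{-d}$ of Claim 4 of (\ref{eq:cfwalkproperty}), the already-proved (\ref{eq:lowerboundsigma}), and a polynomial lower bound on $(f_1,\mathbf{1})_{l^2(\pi^D)}$, the latter being implied by the probabilistic representation of $(f_1,\mathbf{1})_{l^2(\pi^D)}^2$ as the exponential-decay prefactor of $\overline{P}_\pi[H_{A_2}>t]$ combined with Proposition \ref{prop:entranceinvup} applied to $A_2$ in place of $A_1$.
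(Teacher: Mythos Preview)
Your upper bound on $f_1$ and the overall architecture---pigeonhole a seed, then propagate via Lemma~\ref{lem:comparison}---match the paper. The difference, and the gap, lies in where you place the seed.

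You put $x^\star$ at macroscopic distance from $A_2$ and from $\partial U^N$, and then need $\overline{P}_y[H_{x^\star}<H_{A_2}]\ge N^{-c}$ for \emph{every} $y\in D$. Two of your proposed tools do not deliver this. First, the electrical-network argument of (\ref{eq:qsdlbc2})--(\ref{eq:qsdlbc4}) rests on the escape identity $P_a^{\mathrm{col}}[H_x<\widetilde H_a]=(w_a\,r^{\mathrm{col}}(a,x))^{-1}$, which is special to the collapsed vertex $a$; it gives the maximum over $y\in\partial A_2$, not a bound uniform in $y$, and a resistance bound along a path does not by itself control the voltage at an interior vertex. Second, your appeal to Harnack near $\partial U^N$ relies on a false premise: the confined walk is \emph{not} uniformly elliptic there. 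By claim~1.\ of (\ref{eq:hNproperty}) one can have $h_N(x)\sim cN^{-2}$ for $x\in U^N$ close to the sphere of radius $NR$, while the smoothness of $\widetilde h$ only gives $|h_N(y)-h_N(x)|\le c'/N$ for neighbours; hence $h_N(y)/h_N(x)$ can be of order $N$, and the jump rates of $\widetilde L$ vary by the same factor. Claim~1.\ of (\ref{eq:thproperty}) supplies smoothness of $\widetilde h$ but no positive lower bound near $\partial B_R$, so it does not rescue ellipticity.

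The paper sidesteps all of this by placing the seed on the \emph{other} side: after the pigeonhole (and, if the pigeonhole point lands outside $A_4$, one application of (\ref{eq:xppentrance}) together with Lemma~\ref{lem:comparison}) the seed $x'$ sits in $A_4\setminus A_2$. Then for any $y\in D$ the walk must cross $\partial A_5$ before reaching $A_2$---trivially if $y\notin A_5$, and via a simple-random-walk escape estimate if $y\in A_5\cap D$; from $\partial A_5$ the standard Harnack inequality for \emph{simple} random walk on an annulus inside $A_6$ compares all starting points; and a final simple-random-walk exit-distribution estimate hits $x'$. Every step lives inside $A_6$, where by (\ref{eq:coincidestopped}) the confined walk coincides with simple random walk, so the behaviour near $\partial U^N$ never enters.

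Your route to a lower bound on $(f_1,\mathbf 1)_{l^2(\pi^D)}$ via the spectral prefactor can be made to work but is circuitous; once $\max_{x\in D} f_1(x)\ge N^{-c}$ is read off from the $l^2(\pi^D)$-normalisation as in (\ref{eq:maxf1lb}), the one-term bound $(f_1,\mathbf 1)_{l^2(\pi^D)}\ge f_1(x_{\max})\,\pi^D(x_{\max})\ge N^{-c'}$ already suffices.
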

\begin{proof}
We first prove (\ref{eq:lowerboundsigma}). The claim (\ref{eq:lowerboundf})
will then follow. Because $\sigma$ is a probability measure, and
\begin{equation}
\big|D\big|\leq cN^{d},\label{eq:Dsize}
\end{equation}
there must exist some $x'$ in $D$ such that 
\begin{equation}
\sigma(x')\geq cN^{-d}.\label{eq:qsmassumption}
\end{equation}
By (\ref{eq:sigmacomparison}), to prove (\ref{eq:lowerboundsigma})
it suffices to prove that for all $x\in D$, 
\begin{equation}
\overline{P}_{x}[H_{x'}<H_{A_{2}}]\geq N^{-c'}.\label{eq:goalqsd}
\end{equation}
We now prove (\ref{eq:goalqsd}).To prove (\ref{eq:lowerboundsigma}),
we distinguish between two cases according to the location of $x'$.

Case 1: $x'\in A_{4}\backslash A_{2}$ (recall the definition of $A_{4}$
in (\ref{eq:boxdef})). By (\ref{eq:coincidestopped}) and a standard
hitting estimate (see Proposition 1.5.10, p. 36 of \cite{Intersection})
for simple random walk, for all $x$ in $D$, we have that (recall
the definition of $A_{5}$ in (\ref{eq:boxdef})), 
\begin{equation}
\overline{P}_{x}[H_{\partial A_{5}}<H_{A_{2}}]\geq N^{-c},\label{eq:exitestimate}
\end{equation}
(note that the left-hand side equals $1$ if $x\notin A_{5}$). We
write 
\begin{equation}
l(x)=\overline{P}_{x}[H_{x'}<H_{A_{2}}].
\end{equation}
By the strong Markov property applied at $H_{\partial A_{5}}$,
\begin{equation}
l(x)\overset{{\rm Markov}}{\geq}\overline{P}_{x}[H_{\partial A_{5}}<H_{A_{2}}]\min_{y\in\partial A_{5}}l(y)\overset{(\ref{eq:exitestimate})}{\geq}N^{-c}\min_{y\in\partial A_{5}}l(y).\label{eq:harnackargu}
\end{equation}

We now develop a lower bound on the right-hand side of (\ref{eq:harnackargu}).
Let $S_{1}=B_{\infty}(x_{0},3N^{r_{5}})\backslash B_{\infty}(x_{0,}\frac{1}{3}N^{r_{5}})$
and $S_{2}=B_{\infty}(x_{0},2N^{r_{5}})\backslash B_{\infty}(x_{0,}\frac{1}{2}N^{r_{5}})$,
(we tacitly assume that $N$ is sufficiently large that $S_{1}\subset A_{6},$
and $S_{2}\subset D$). It is straight-forward to see that $l(x)$
is $\widetilde{L}-$harmonic in $D\backslash\{x'\}$ and that $\widetilde{L}$
coincides with $\Delta_{\mathrm{dis}}$ in $S_{1}$ (see (\ref{eq:coincidestopped})).
By the Harnack inequality (see Theorem 6.3.9, p. 131 of \cite{LawlerLimic}),
we know that (note that $\partial A_{5}\subset S_{2}$) 
\begin{equation}
\min_{y\in\partial A_{5}}l(y)\geq c'\max_{y\in\partial A_{5}}l(y).
\end{equation}
This implies by (\ref{eq:harnackargu}) that
\begin{equation}
\min_{x\in D}l(x)\geq c'N^{-c}\max_{y\in\partial A_{5}}l(y).\label{eq:exitest2}
\end{equation}
We now take any point $y'\in\partial A_{5}$ of least distance (in
the sense of $l^{\infty}-$norm) to $x'$ on $\partial A_{5}$ and
sharing $(d-1)$ common coordinates with $x'$ and fix $y'$. We set
$B=B_{\infty}(y',|y'-x'|_{\infty}-1)$. Our way of choosing $y'$
ensures that $x'\in\partial B$. Then by (\ref{eq:coincidestopped})
we have 
\begin{equation}
l(y')=\overline{P}_{y'}[H_{x'}<H_{A_{2}}]\geq\overline{P}_{y'}[X_{T_{B}}=x']\overset{(\ref{eq:coincidestopped})}{=}P_{y'}[X_{T_{B}}=x'].\label{eq:exitest3}
\end{equation}
By a classical estimate(see Lemma 6.3.7, pp. 158-159 of \cite{LawlerLimic}),
we have 
\begin{equation}
P_{y'}[X_{T_{B}}=x']\geq cN^{(1-d)r_{5}}.\label{eq:exitest4}
\end{equation}
Thus the claim (\ref{eq:goalqsd}) follows by collecting (\ref{eq:exitest2}),
(\ref{eq:exitest3}) and (\ref{eq:exitest4}).

Case 2: $x'\in D\backslash A_{4}$. Since $\partial A_{3}\subset A_{4}\backslash A_{2}$,
if we can prove that for some $y\in\partial A_{3}$, 
\begin{equation}
\sigma(y)\geq N^{-c},\label{eq:sigmax''}
\end{equation}
then we are brought back to Case 1 by taking the $y$ in (\ref{eq:sigmax''})
as the $x'$ in (\ref{eq:qsmassumption}). Now we show that we can
indeed find such $y$ that (\ref{eq:sigmax''}) holds. By (\ref{eq:sigmacomparison})
and our assumption that $\sigma(x')\geq N^{-c}$, we have
\begin{equation}
\sigma(y)\overset{(\ref{eq:sigmacomparison})}{\geq}N^{-c}\overline{P}_{y}[H_{x'}<H_{A_{2}}]\sigma(x')\overset{(\ref{eq:qsmassumption})}{\geq}N^{-c'}\overline{P}_{y}[H_{x'}<H_{A_{2}}].
\end{equation}
Hence we know that by (\ref{eq:xppentrance}), if we pick the $y$
that maximizes the probability in the left-hand side of (\ref{eq:xppentrance}),
the claim (\ref{eq:sigmax''}) is indeed true.

With these two cases we conclude the proof of (\ref{eq:lowerboundsigma}).

Now we prove (\ref{eq:lowerboundf}). By the fact that $f_{1}$ is
a unit vector in $l^{2}(\pi^{D})$ we know that 
\begin{equation}
(f_{1},f_{1})_{l^{2}(\pi^{D})}=1.\label{eq:f1norm}
\end{equation}
To prove the first inequality of (\ref{eq:lowerboundf}), we observe
that, thanks to (\ref{eq:f1norm}):
\begin{equation}
1=(f_{1},f_{1})_{l^{2}(\pi^{D})}\geq\max_{x\in D}f_{1}^{2}(x)\min_{x\in D}\pi^{D}(x)\overset{(\ref{eq:cfwalkproperty})\,4.}{\geq}N^{-c}\max_{x\in D}f_{1}^{2}(x).
\end{equation}
To prove the second inequality of (\ref{eq:lowerboundf}), we observe
that by (\ref{eq:f1norm}) 
\begin{equation}
\max_{x\in D}\pi^{D}(x)f_{1}^{2}(x)\geq\frac{1}{|D|},
\end{equation}
which implies that
\begin{equation}
\max_{x\in D}f_{1}(x)\geq\sqrt{\frac{1}{|D|\max_{x\in D}\pi^{D}(x)}}\overset{(\ref{eq:cfwalkproperty})\,4.}{\underset{(\ref{eq:Dsize})}{\geq}}N^{-c}.\label{eq:maxf1lb}
\end{equation}
This implies that for all $x\in D$,
\begin{equation}
f_{1}(x)\overset{(\ref{eq:quasidef})}{=}\frac{1}{\pi^{D}(x)}\sigma(x)(f_{1},\mathbf{1})_{l^{2}(\pi^{D})}\overset{(\ref{eq:cfwalkproperty})\,4.}{\underset{(\ref{eq:lowerboundsigma})}{\geq}}N^{-c}\max_{x\in D}f_{1}(x)\min_{x\in D}\pi^{D}(x)\overset{(\ref{eq:cfwalkproperty})\,4.}{\underset{(\ref{eq:maxf1lb})}{\geq}}N^{-c'}.
\end{equation}
This finishes the proof of (\ref{eq:lowerboundf}), and concludes
the proof of Proposition \ref{prop:lbsigma}.
\end{proof}
In the following proposition we show that the spectral gap of $L^{D}$
is at least of order $N^{-2}$.
\begin{lem}
\label{lem:quasispectrum}One has that for large N
\begin{equation}
\lambda_{2}^{D}-\lambda_{1}^{D}\geq cN^{-2}.\label{eq:qell}
\end{equation}
\end{lem}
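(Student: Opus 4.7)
The plan is to obtain the gap as the difference between a lower bound on $\lambda_{2}^{D}$ coming from Cauchy's interlacing theorem and an upper bound on $\lambda_{1}^{D}$ coming from the variational principle. For the former, I would symmetrize both generators by the standard similarity transforms $S := \Pi^{1/2}(-\widetilde{L})\Pi^{-1/2}$ on $\mathbb{R}^{U^{N}}$ and $S^{D} := (\Pi^{D})^{1/2}(-L^{D})(\Pi^{D})^{-1/2}$ on $\mathbb{R}^{D}$, where $\Pi = \mathrm{diag}(\pi)$ and $\Pi^{D}=\mathrm{diag}(\pi^{D})$. By reversibility (claim 1 of (\ref{eq:cfwalkproperty})), both are symmetric matrices with the same spectra as $-\widetilde{L}$ and $-L^{D}$ respectively. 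Since $\pi^{D}=\pi|_{D}$ and the diagonal entries of $L^{D}$ on $D$ coincide with those of $\widetilde{L}$ (the killing of mass leaking into $A_{2}$ is already built into the total jump-rate sum $-\widetilde{L}(x,x) = \frac{1}{2d}\sum_{y\sim x,\,y\in U^{N}} f(y)/f(x)$, which $L^{D}$ keeps untouched), the matrix $S^{D}$ is precisely the principal submatrix of $S$ indexed by $D$. Cauchy's interlacing theorem then yields $\lambda_{i}^{D} \geq \mu_{i}$ for every $i\leq |D|$, where $0 = \mu_{1} \leq \mu_{2} \leq \dots$ are the eigenvalues of $S$. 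In particular $\lambda_{2}^{D} \geq \mu_{2} = \overline{\lambda} \geq cN^{-2}$ by Proposition \ref{prop:spectralgap}.

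For the upper bound on $\lambda_{1}^{D}$, I would test the variational formula $\lambda_{1}^{D} = \inf \overline{\mathcal{E}}(\tilde{g},\tilde{g})/\|g\|_{l^{2}(\pi^{D})}^{2}$ with $g(x) := \overline{E}_{x}[H_{A_{2}}]$ (extended by zero on $A_{2}$). Dynkin's formula gives $-L^{D}g \equiv 1$ on $D$, so $\overline{\mathcal{E}}(\tilde{g},\tilde{g}) = (-L^{D}g,g)_{l^{2}(\pi^{D})} = \overline{E}_{\pi}[H_{A_{2}}]$, while Cauchy--Schwarz yields $\|g\|_{l^{2}(\pi^{D})}^{2} \geq \overline{E}_{\pi}[H_{A_{2}}]^{2}/\pi(D) \geq \overline{E}_{\pi}[H_{A_{2}}]^{2}$, whence $\lambda_{1}^{D} \leq 1/\overline{E}_{\pi}[H_{A_{2}}]$. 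To lower-bound $\overline{E}_{\pi}[H_{A_{2}}]$, I would combine (\ref{eq:hittingtimebig1}) with Markov's inequality to obtain $\overline{E}_{x}[H_{A_{2}}] \geq \overline{t_{*}}(1 - N^{-c}) \geq \overline{t_{*}}/2$ uniformly over $x \in U^{N}\setminus A_{3}$ and large $N$, and use $\pi(U^{N}\setminus A_{3}) \geq 1/2$ (from claim 4 of (\ref{eq:cfwalkproperty}), which gives $\pi(A_{3}) \lesssim N^{(r_{3}-1)d} \to 0$) to conclude $\overline{E}_{\pi}[H_{A_{2}}] \geq \overline{t_{*}}/4 = N^{2}\log^{2}N/4$, and hence $\lambda_{1}^{D} \leq 4N^{-2}/\log^{2}N$.

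Combining the two estimates yields $\lambda_{2}^{D} - \lambda_{1}^{D} \geq cN^{-2} - 4N^{-2}/\log^{2}N \geq (c/2)N^{-2}$ for $N$ sufficiently large, which is the desired bound. The one point that deserves care is the identification of $S^{D}$ as a principal submatrix of $S$: it hinges on the fact that the killing is encoded by the diagonal of $-\widetilde{L}$ agreeing with that of $-L^{D}$ on $D$, rather than by any modification of it; without this observation Cauchy interlacing would not directly apply and one would have to fall back to a heavier Doob $h$-transform plus canonical-paths argument (which, in view of the only polynomial bounds $N^{-c} \leq f_{1} \leq N^{c'}$ available from Proposition \ref{prop:lbsigma}, would give a strictly weaker bound $cN^{-2-c'}$).
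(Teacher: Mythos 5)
Your proof is correct, and its skeleton coincides with the paper's: bound $\lambda_{2}^{D}$ from below by $\overline{\lambda}$ via eigenvalue interlacing, and show $\lambda_{1}^{D}=o(N^{-2})$ by bounding it above by $1/\overline{E}_{\pi}[H_{A_{2}}]$. The differences are in how the two halves of the $\lambda_{1}^{D}$ estimate are obtained. The paper invokes the Aldous--Brown identity $\lambda_{1}^{D}=1/\overline{E}_{\sigma}[H_{A_{2}}]$ together with their comparison $\overline{E}_{\sigma}[H_{A_{2}}]\geq\overline{E}_{\pi}[H_{A_{2}}]$, whereas you reprove the resulting inequality $\lambda_{1}^{D}\leq1/\overline{E}_{\pi}[H_{A_{2}}]$ directly by testing the Rayleigh quotient with $g=\overline{E}_{\cdot}[H_{A_{2}}]$ and using $-L^{D}g\equiv1$ plus Cauchy--Schwarz; this is self-contained and avoids the quasi-stationary distribution entirely. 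For the lower bound on $\overline{E}_{\pi}[H_{A_{2}}]$, the paper reruns the capacity argument of Proposition \ref{prop:entranceinvup} with $A_{2},A_{3}$ in place of $A_{1},A_{2}$, obtaining $\lambda_{1}^{D}\leq cN^{-d+(d-2)r_{2}}$, while you use (\ref{eq:hittingtimebig1}) and Markov's inequality to get the weaker but amply sufficient $\lambda_{1}^{D}\leq cN^{-2}\log^{-2}N$; your route is lighter, the paper's gives a sharper polynomial bound. Your explicit identification of the symmetrized killed generator as a principal submatrix of the symmetrized full generator (no extra diagonal killing term) is exactly the point that makes the citation of the interlacing theorem legitimate, and is correct.
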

\begin{proof}
Recall that $\overline{\lambda}_{2}$ stand for the second smallest
eigenvalue of $-\widetilde{L}$. By the eigenvalue interlacing inequality
(see Theorem 2.1 of \cite{Interlacing}), we have 
\begin{equation}
\lambda_{2}^{D}\geq\overline{\lambda}_{2}.\label{eq:l2dgel2}
\end{equation}
While by the paragraph below equation (12) of \cite{AB} we have 
\begin{equation}
\lambda_{1}^{D}=\frac{1}{\overline{E}_{\sigma}[H_{A_{2}}]}.\label{eq:l1a2lb}
\end{equation}
By Lemma 10 a) of \cite{AB}, we have
\begin{equation}
\overline{E}_{\sigma}[H_{A_{2}}]\geq\overline{E}_{\pi}[H_{A_{2}}],\mbox{ or equivalently }\frac{1}{\overline{E}_{\sigma}[H_{A_{2}}]}\leq\frac{1}{\overline{E}_{\pi}[H_{A_{2}}]}.\label{eq:sigmapi}
\end{equation}
By an argument similar to the proof of Proposition \ref{prop:entranceinvup}
(by replacing $A_{1}$, $A_{2}$ by $A_{2}$, $A_{3}$), we find that
\begin{equation}
\frac{1}{\overline{E}_{\pi}[H_{A_{2}}]}\leq cN^{-d+(d-2)r_{2}}.\label{eq:h2hit}
\end{equation}
This implies by (\ref{eq:l1a2lb}) and (\ref{eq:sigmapi}) that
\begin{equation}
\lambda_{1}^{D}\leq cN^{-d+(d-2)r_{2}}.\label{eq:l1barub}
\end{equation}
Hence, we obtain that for large $N$ 
\begin{equation}
\lambda_{2}^{D}-\lambda_{1}^{D}\overset{(\ref{eq:l1barub})}{\underset{(\ref{eq:l2dgel2})}{\geq}}\overline{\lambda}_{2}-cN^{-d+(d-2)r_{2}}\overset{(\ref{eq:lbsg})}{\geq}c'N^{-2}.
\end{equation}

\end{proof}
The next Proposition shows that the q.s.d. on $D$ is very close to
the distribution of the confined walk at time $\overline{t_{*}}$
conditioned on not hitting $A_{2}$ (see (\ref{eq:regen}) for the
definition of $\overline{t_{*}}$).
\begin{prop}
\label{prop:qsdcp}One has that for large $N$,

\begin{equation}
\sup_{x,y\in D}|\overline{P}_{x}[Y_{\overline{t_{*}}}=y\big|H_{A_{2}}>\overline{t_{*}}]-\sigma(y)|\leq e^{-c\log^{2}N}.\label{eq:qsdcp}
\end{equation}
\end{prop}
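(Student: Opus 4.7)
The plan is a standard spectral-decomposition argument for the killed semigroup $\{H_t^D\}_{t\ge 0}$, exploiting the spectral gap $\lambda_2^D - \lambda_1^D \ge cN^{-2}$ established in Lemma \ref{lem:quasispectrum} and the bounds on $f_1$ provided by Proposition \ref{prop:lbsigma}.

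First I would expand in the orthonormal basis $\{f_i\}$ of eigenfunctions of $-L^D$ in $l^2(\pi^D)$. Since for $x,y\in D$ one has $\delta_y/\pi^D(y) = \sum_i f_i(y)\,f_i$ in $l^2(\pi^D)$, applying $H_t^D$ yields
\begin{equation}
\overline{P}_x[X_t = y,\,H_{A_2}>t] \;=\; \pi^D(y)\sum_{i=1}^{|D|} e^{-\lambda_i^D t}\,f_i(x)f_i(y),\qquad \overline{P}_x[H_{A_2}>t] \;=\; \sum_{i=1}^{|D|} e^{-\lambda_i^D t}\,f_i(x)\,(f_i,\mathbf 1)_{l^2(\pi^D)}.
\end{equation}
Dividing, the conditional distribution at time $t=\overline{t_*}$ becomes
\begin{equation}
\overline{P}_x[X_{\overline{t_*}}=y\mid H_{A_2}>\overline{t_*}] \;=\; \frac{\pi^D(y)\bigl[f_1(x)f_1(y) + R_1(x,y)\bigr]}{f_1(x)(f_1,\mathbf 1)_{l^2(\pi^D)} + R_2(x)},
\end{equation}
where, after factoring $e^{-\lambda_1^D\overline{t_*}}$ out of numerator and denominator,
\begin{equation}
R_1(x,y) \;=\; \sum_{i\ge 2} e^{-(\lambda_i^D-\lambda_1^D)\overline{t_*}} f_i(x)f_i(y), \qquad R_2(x) \;=\; \sum_{i\ge 2} e^{-(\lambda_i^D-\lambda_1^D)\overline{t_*}} f_i(x)(f_i,\mathbf 1)_{l^2(\pi^D)}.
\end{equation}

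The key step is to control $R_1$ and $R_2$ by Cauchy--Schwarz combined with the Parseval-type identity $\sum_i f_i^2(z) = 1/\pi^D(z)$ (this follows from orthonormality by expanding $\delta_z/\pi^D(z)$). Using the spectral gap $\lambda_2^D - \lambda_1^D \ge cN^{-2}$ from Lemma \ref{lem:quasispectrum} together with $\overline{t_*} = N^2\log^2 N$, one obtains
\begin{equation}
|R_1(x,y)| \;\le\; e^{-c\log^2 N}\sqrt{\tfrac{1}{\pi^D(x)\pi^D(y)}} \;\overset{(\ref{eq:cfwalkproperty})\,4.}{\le}\; e^{-c'\log^2 N},
\end{equation}
and an analogous bound holds for $|R_2(x)|$, since $(f_i,\mathbf 1)_{l^2(\pi^D)}^2 \le (\mathbf 1,\mathbf 1)_{l^2(\pi^D)} \le 1$ by orthonormality and $\pi^D(U^N)\le 1$.

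For the denominator to remain safely bounded away from $0$, I would invoke the lower bound $f_1(x) \ge N^{-c}$ from (\ref{eq:lowerboundf}) together with $(f_1,\mathbf 1)_{l^2(\pi^D)} \ge \pi^D(x_0)f_1(x_0) \ge N^{-c}$ (taking any fixed $x_0 \in D$ and using (\ref{eq:cfwalkproperty})\,4.~and (\ref{eq:lowerboundf})). Thus the denominator is at least $\tfrac12 N^{-c}$ for large $N$, and the numerator on the right side of the target identity is $\pi^D(y)f_1(x)f_1(y)$, which by definition (\ref{eq:quasidef}) gives $\sigma(y)$ when divided by $f_1(x)(f_1,\mathbf 1)_{l^2(\pi^D)}$. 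Subtracting $\sigma(y)$ and putting over a common denominator, the difference is bounded by a polynomial factor in $N$ times $e^{-c'\log^2 N}$, which is swallowed by a slight reduction of the constant, yielding the desired $e^{-c\log^2 N}$ bound uniformly in $x,y \in D$.

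The main obstacle is not any single step but the bookkeeping: one must check that none of the polynomially small factors (coming from $\pi^D(x)^{-1/2}$, from $(f_i,\mathbf 1)$, and from the lower bound on the denominator) can outgrow the stretched exponential $e^{-c\log^2 N}$. This is the reason the rough polynomial estimates on $f_1$ in Proposition \ref{prop:lbsigma} are sufficient: every such factor is of the form $N^{O(1)}$, while the spectral-gap decay at time $\overline{t_*}$ contributes a factor $e^{-c\log^2 N}$ that dominates any fixed power of $N$.
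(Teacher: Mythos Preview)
Your proposal is correct and follows essentially the same spectral-decomposition approach as the paper: both express the conditional probability as a ratio $H_{\overline{t_*}}^D\delta_y(x)/H_{\overline{t_*}}^D\mathbf{1}(x)$, expand numerator and denominator in the eigenbasis $\{f_i\}$, isolate the $i=1$ term, and control the remainders using the spectral gap from Lemma~\ref{lem:quasispectrum} together with the polynomial bounds on $f_1$ and $\pi^D$ from Proposition~\ref{prop:lbsigma} and (\ref{eq:cfwalkproperty})\,4. Your use of the Parseval identity $\sum_i f_i^2(z)=1/\pi^D(z)$ with Cauchy--Schwarz to bound $R_1,R_2$ is in fact slightly cleaner than the paper's term-by-term estimate, but the argument is otherwise the same.
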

\begin{proof}
The conditional probability in (\ref{eq:qsdcp}) is expressed through
$H_{\overline{t_{*}}}^{D}$ as 
\begin{equation}
P_{x}[X_{\overline{t_{*}}}=y|H_{A_{2}}>\overline{t_{*}}]=\frac{H_{\overline{t_{*}}}^{D}\delta_{y}(x)}{(H_{\overline{t_{*}}}^{D}\mathbf{1})(x)}.\label{quasi3}
\end{equation}

Now we calculate the numerator in the right-hand side of (\ref{quasi3}).
We decompose $\delta_{y}$ in the $l^{2}(\pi^{D})$ base $\{f_{i}\}_{i=1,\cdots,|D|}$:
\begin{equation}
\delta_{y}=\sum_{i=1}^{|D|}a_{i}f_{i},\mbox{ where }\label{eq:deltaydecomp}
\end{equation}
\begin{equation}
a_{i}=(\delta_{y},f_{i})_{l^{2}(\pi^{D})}=f_{i}(y)\pi^{D}(y)\mbox{, for }1\leq i\leq|D|.\label{eq:aicalc}
\end{equation}
Hence, one has 
\begin{equation}
H_{\overline{t_{*}}}^{D}\delta_{y}=\sum_{i=1}^{|D|}e^{-\lambda_{i}^{D}\overline{t_{*}}}a_{i}f_{i}=e^{-\lambda_{1}^{D}\overline{t_{*}}}\big(a_{1}f_{1}+\sum_{i=2}^{|D|}e^{(\lambda_{1}^{D}-\lambda_{i}^{D})\overline{t_{*}}}a_{i}f_{i}\big),
\end{equation}
so that 
\begin{equation}
H_{\overline{t_{*}}}^{D}\delta_{y}(x)=e^{-\lambda_{1}^{D}\overline{t_{*}}}\big(a_{1}f_{1}(x)+\sum_{i=2}^{|D|}e^{(\lambda_{1}^{D}-\lambda_{i}^{D})\overline{t_{*}}}a_{i}f_{i}(x)\big).\label{eq:qsdcalc1}
\end{equation}
By Proposition \ref{prop:lbsigma}, one has 
\begin{equation}
a_{1}f_{1}(x)\overset{(\ref{eq:aicalc})}{=}\pi^{D}(y)f_{1}(y)f_{1}(x)\overset{(\ref{eq:lowerboundf})}{\underset{(\ref{eq:cfwalkproperty})\,4.}{\geq}}N^{-c}.\label{eq:qsdest3}
\end{equation}
For large $N$, thanks to Lemma \ref{lem:quasispectrum}, the reminder
term inside the bracket of (\ref{eq:qsdcalc1}) is bounded by
\begin{align}
\begin{split}\Big|\sum_{i=2}^{|D|}e^{(\lambda_{1}^{D}-\lambda_{i}^{D})\overline{t_{*}}}a_{i}f_{i}(x)\Big|\leq\sum_{i=2}^{|D|}e^{(\lambda_{1}^{D}-\lambda_{i}^{D})\overline{t_{*}}}\Big|a_{i}f_{i}(x)\Big|\overset{(\ref{eq:liDorder})}{\text{\ensuremath{\underset{(\ref{eq:aicalc})}{\leq}}}}\sum_{i=2}^{|D|}e^{(\lambda_{1}^{D}-\lambda_{2}^{D})\overline{t_{*}}}\Big|\pi^{D}(y)f_{i}(y)f_{i}(x)\Big|\\
\overset{(\ref{eq:qell})}{\underset{(\ref{eq:regen})}{\leq}}|D|e^{-c\log^{2}N}\Big|\pi^{D}(y)f_{i}(y)f_{i}(x)\Big|\overset{(\ref{eq:Dsize})}{\leq}e^{-c'\log^{2}N}\sqrt{\pi^{D}(y)/\pi^{D}(x)}|f_{i}|_{l^{2}(\pi^{D})}^{2}\overset{(\ref{eq:cfwalkproperty})\,4.}{\leq}e^{-c''\log^{2}N}.
\end{split}
\label{eq:sgcontrol}
\end{align}
This implies that 
\begin{equation}
\bigg|\frac{H_{\overline{t_{*}}}^{D}\delta_{y}(x)}{e^{-\lambda_{1}^{D}\overline{t_{*}}}a_{1}f_{1}(x)}-1\bigg|=\bigg|\frac{H_{\overline{t_{*}}}^{D}\delta_{y}(x)}{e^{-\lambda_{1}^{D}\overline{t_{*}}}\pi^{D}(y)f_{1}(y)f_{1}(x)}-1\bigg|\overset{(\ref{eq:qsdcalc1})\textrm{-}(\ref{eq:sgcontrol})}{\leq}e^{-c\log^{2}N}.\label{eq:qsdfinal1}
\end{equation}
We now treat the denominator of the right-hand side of (\ref{quasi3}).
Since $f_{1}\geq0$, we have that
\begin{align}
\begin{split}(H_{\overline{t_{*}}}^{D}1)(x) & \qquad\ =\quad\quad\ \sum_{x'\in D}H_{\overline{t_{*}}}^{D}\delta_{x'}(x)=\sum_{x'\in D}\frac{1}{\pi^{D}(x)}(H_{\overline{t_{*}}}^{D}\delta_{x'},\delta_{x})_{l^{2}(\pi^{D})}\\
 & \overset{\mathrm{self-adjointness}}{=}\sum_{x'\in D}\frac{1}{\pi^{D}(x)}(\delta_{x'},H_{\overline{t_{*}}}^{D}\delta_{x})_{l^{2}(\pi^{D})}=\sum_{x'\in D}\frac{\pi^{D}(x')}{\pi^{D}(x)}H_{\overline{t_{*}}}^{D}\delta_{x}(x').
\end{split}
\label{eq:qsdcalc15}
\end{align}
Similar to (\ref{eq:deltaydecomp}) we decompose $\delta_{x}$ in
the $l^{2}(\pi^{D})$ orthonormal base $\{f_{i}\}_{i=1,\cdots,|D|}$:
\begin{equation}
\delta_{y}=\sum_{i=1}^{|D|}b_{i}f_{i},\mbox{ where }b_{i}=\pi^{D}(x)f_{i}(x)\mbox{ for all }i=1,\cdots,|D|.\label{eq:deltaydec}
\end{equation}
As in (\ref{eq:qsdcalc1}) we have for all $x'\in D$, 
\begin{equation}
H_{\overline{t_{*}}}^{D}\delta_{x}(x')=e^{-\lambda_{1}^{D}\overline{t_{*}}}\big(b_{1}f_{1}(x')+\sum_{i=2}^{|D|}e^{(\lambda_{1}^{D}-\lambda_{i}^{D})\overline{t_{*}}}b_{i}f_{i}(x')\big).\label{eq:deltaxcalc}
\end{equation}
So, by (\ref{eq:qsdcalc15}) and the above equality we find that 
\begin{equation}
\begin{array}{cc}
\begin{array}{rcl}
(H_{\overline{t_{*}}}^{D}1)(x) & = & \sum_{x'\in D}\frac{\pi^{D}(x')}{\pi^{D}(x)}e^{-\lambda_{1}^{D}\overline{t_{*}}}\big(b_{1}f_{1}(x')+\sum_{i=2}^{|D|}e^{(\lambda_{1}^{D}-\lambda_{i}^{D})\overline{t_{*}}}b_{i}f_{i}(x')\big)\\
 & \overset{(\ref{eq:deltaydec})}{=} & e^{-\lambda_{1}^{D}\overline{t_{*}}}\big(f_{1}(x)(f_{1},\mathbf{1})_{l^{2}(\pi^{D})}+\sum_{i=2}^{|D|}e^{(\lambda_{1}^{D}-\lambda_{i}^{D})\overline{t_{*}}}f_{i}(x)(f_{i},\mathbf{1})_{l^{2}(\pi^{D})}\big).
\end{array}\end{array}\label{eq:qsdcalc3}
\end{equation}
Again, by Proposition \ref{prop:lbsigma}, one has that 
\begin{equation}
(f_{1},\mathbf{1})_{l^{2}(\pi^{D})}\geq N^{-c}.\label{eq:f11norm}
\end{equation}
For large $N$, with the observation that $|{\bf 1}|{}_{l^{2}(\pi^{D})}\leq1$
and $|f_{i}|_{l^{2}(\pi^{D})}=1$, by the definition of $\pi^{D}$,
the reminder term inside the bracket of (\ref{eq:qsdcalc3}) is bounded
by (we apply Cauchy-Schwarz inequality at $(*)$) 
\begin{eqnarray}
 &  & \sum_{i=2}^{|D|}e^{(\lambda_{1}^{D}-\lambda_{i}^{D})\overline{t_{*}}}\Big|f_{i}(x)(f_{i},\mathbf{1})_{l^{2}(\pi^{D})}\Big|\,\quad\overset{(\ref{eq:liDorder})}{\leq}\sum_{i=2}^{|D|}e^{(\lambda_{1}^{D}-\lambda_{2}^{D})\overline{t_{*}}}\Big|f_{i}(x)(f_{i},\mathbf{1})_{l^{2}(\pi^{D})}\Big|\nonumber \\
 & \overset{(\ref{eq:qell}),(\ref{eq:regen})}{\underset{(*)}{\leq}} & \sum_{i=2}^{|D|}e^{-c\log^{2}N}\Big|f_{i}(x)\Big||f_{i}|{}_{l^{2}(\pi^{D})}|{\bf 1}|{}_{l^{2}(\pi^{D})}\leq\ \sum_{i=2}^{|D|}e^{-c\log^{2}N}\sqrt{\frac{1}{\pi^{D}(x)}}\Big|\sqrt{\pi^{D}(x)}f_{i}(x)\Big|\label{eq:qsdest2}\\
 & \overset{(\ref{eq:cfwalkproperty})\,4.}{\leq} & \sum_{i=2}^{|D|}e^{-c\log^{2}N}N^{c}|f_{i}|{}_{l^{2}(\pi^{D})}\leq e^{-c'\log^{2}N}.\nonumber 
\end{eqnarray}
Along with (\ref{eq:f11norm}), this shows that 
\begin{equation}
\bigg|\frac{(H_{\overline{t_{*}}}^{D}\mathbf{1})(x)}{e^{-\lambda_{1}^{D}\overline{t_{*}}}f_{1}(x)(f_{1},\mathbf{1})_{l^{2}(\pi^{D})}}-1\bigg|\leq e^{-c\log^{2}N}.\label{eq:qsdfinal2}
\end{equation}
Combining (\ref{eq:qsdfinal1}) and (\ref{eq:qsdfinal2}) one has
that for large $N$ and uniformly for all $x,y\in D$, (note that
we define $\sigma(\cdot)$ in (\ref{eq:quasidef})) 
\begin{equation}
\bigg|P_{x}[X_{\overline{t_{*}}}=y|H_{A_{2}}>\overline{t_{*}}]-\sigma(y)\bigg|\overset{(\ref{quasi3})}{=}\bigg|\frac{H_{\overline{t_{*}}}^{D}\delta_{y}(x)}{(H_{\overline{t_{*}}}^{D}\mathbf{1})(x)}-\sigma(y)\bigg|\leq e^{-c\log^{2}N}\sigma(y)\leq e^{-c\log^{2}N}.
\end{equation}
which is exactly the claim (\ref{eq:qsdcp}).
\end{proof}
We define the stopping time $V$ as the first time when the confined
random walk has stayed outside $A_{2}$ for a consecutive duration
of $\overline{t_{*}}$:
\begin{equation}
V=\inf\{t\geq\overline{t_{*}}:X_{[t-\overline{t_{*}},t]}\cap A_{2}=\emptyset\}.\label{eq:Vdef}
\end{equation}
The next lemma is a preparatory result for Proposition \ref{prop:qsdem}
below. This lemma shows that the probability $\overline{P}_{x}[V<\widetilde{H}_{A_{1}}]$,
when normalized by the sum of such probabilities as $x$ varies in
the inner boundary of $A_{1}$, is approximately equal to $\widetilde{e}_{A_{1}}(x)$,
the normalized equilibrium measure of $A_{1}$.
\begin{lem}
For large $N$, one has that
\begin{equation}
\bigg|\frac{\overline{P}_{x}[V<\widetilde{H}_{A_{1}}]}{\sum_{y\in\partial_{i}A_{1}}\overline{P}_{y}[V<\widetilde{H}_{A_{1}}]\widetilde{e}_{A_{1}}(x)}-1\bigg|\leq N^{-c}.\label{quni4}
\end{equation}
\end{lem}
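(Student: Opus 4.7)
The plan is to use the strong Markov property at the first exit time $T_{A_{2}}$ to reduce (\ref{quni4}) to the approximate constancy, on $\partial A_{2}$, of the ``continuation probability''
\begin{equation*}
q(z)\;:=\;\overline{P}_{z}[V<H_{A_{1}}],\qquad z\in D.
\end{equation*}
For $x\in\partial_{i}A_{1}$, the strong Markov property at $T_{A_{2}}$ yields
\begin{equation*}
\overline{P}_{x}[V<\widetilde{H}_{A_{1}}]\;=\;\sum_{z\in\partial A_{2}}\overline{P}_{x}\bigl[T_{A_{2}}<\widetilde{H}_{A_{1}},\,X_{T_{A_{2}}}=z\bigr]\,q(z).
\end{equation*}
By (\ref{eq:coincidestopped}) the stopped walk up to $T_{A_{6}}$ has the same law under $\overline{P}_{x}$ and $P_{x}$, so $\overline{P}_{x}[T_{A_{2}}<\widetilde{H}_{A_{1}}]=P_{x}[T_{A_{2}}<\widetilde{H}_{A_{1}}]\in[e_{A_{1}}(x),\,(1+N^{-c})e_{A_{1}}(x)]$ by (\ref{eq:emest}). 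Granted a constant $q_{\star}>0$ with $q(z)=q_{\star}(1+O(N^{-c}))$ uniformly in $z\in\partial A_{2}$, one obtains $\overline{P}_{x}[V<\widetilde{H}_{A_{1}}]=q_{\star}e_{A_{1}}(x)(1+O(N^{-c}))$; summing over $y\in\partial_{i}A_{1}$ gives $\sum_{y}\overline{P}_{y}[V<\widetilde{H}_{A_{1}}]=q_{\star}\,\mathrm{cap}(A_{1})(1+O(N^{-c}))$, and dividing produces $\widetilde{e}_{A_{1}}(x)$ by (\ref{eq:nemdef}), i.e.\ (\ref{quni4}).

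To establish the approximate constancy of $q$ on $\partial A_{2}$, I would decompose by the first return to $A_{2}$,
\begin{equation*}
q(z)\;=\;\overline{P}_{z}[H_{A_{2}}>\overline{t_{*}}]\;+\;\overline{E}_{z}\bigl[\mathbf{1}_{H_{A_{2}}\le\overline{t_{*}}}\,\overline{P}_{X_{H_{A_{2}}}}[V<H_{A_{1}}]\bigr],\qquad z\in D,
\end{equation*}
and iterate. The spectral expansion of the killed semigroup $H_{t}^{D}$ in $l^{2}(\pi^{D})$ --- exactly as performed in (\ref{eq:qsdcalc1})--(\ref{eq:sgcontrol}) for Proposition \ref{prop:qsdcp}, combined with the spectral gap $\lambda_{2}^{D}-\lambda_{1}^{D}\ge cN^{-2}$ of Lemma \ref{lem:quasispectrum} and the value $\overline{t_{*}}=N^{2}\log^{2}N$ --- identifies the first summand as
\begin{equation*}
\overline{P}_{z}[H_{A_{2}}>\overline{t_{*}}]\;=\;e^{-\lambda_{1}^{D}\overline{t_{*}}}\,(f_{1},\mathbf{1})_{l^{2}(\pi^{D})}\,f_{1}(z)\,\bigl(1+O(e^{-c\log^{2}N})\bigr),
\end{equation*}
so that its $z$-dependence is carried entirely by $f_{1}(z)$; the iterated excursion contributions factorize analogously. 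Since $\lambda_{1}^{D}\le cN^{-d+(d-2)r_{2}}$ by (\ref{eq:l1barub}), $f_{1}$ is nearly $\widetilde{L}$-harmonic on $D$, and since $\widetilde{L}$ coincides with the discrete Laplacian inside $A_{6}$ by (\ref{eq:coincidestopped}), the classical Harnack inequality in an annular region surrounding $\partial A_{2}$ (used in exactly the spirit of its application within the proof of Proposition \ref{prop:lbsigma}) delivers $f_{1}(z)=c_{0}(1+O(N^{-c}))$ uniformly for $z\in\partial A_{2}$, and thereby the same qualitative bound for $q(z)$.

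The principal obstacle is this last step: propagating the Harnack-type approximate constancy of $f_{1}$ on $\partial A_{2}$ through the full recursion defining $q$, with a cumulative error that stays $O(N^{-c})$. The difficulty is intrinsic: $V$ is a non-Markovian stopping time (depending on the previous $\overline{t_{*}}$ units of trajectory), which is why one must use an excursion decomposition rather than writing down a direct spectral identity for $q$. Controlling the accumulated error of this iteration relies crucially on the hitting-time bounds (\ref{eq:hittingtimebig})--(\ref{eq:hittingtimebig1}) of Lemma \ref{lem:hittingtimeest}, which imply that only a polylogarithmic number of excursions into $A_{2}$ can occur before $V$ is achieved.
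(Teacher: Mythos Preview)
Your reduction via the strong Markov property at $T_{A_{2}}$ is sound, but the argument you give for the approximate constancy of $q$ on $\partial A_{2}$ has a genuine gap. The Harnack inequality yields only $\max_{\partial A_{2}} f_{1}\le C\min_{\partial A_{2}} f_{1}$ for an absolute constant $C$, never $f_{1}(z)=c_{0}(1+O(N^{-c}))$. In fact $f_{1}$ is \emph{not} approximately constant on $\partial A_{2}$: it is the Perron--Frobenius eigenfunction of the walk killed on $A_{2}$, so near $\partial A_{2}$ it behaves like the local escape probability from the \emph{box} $A_{2}$, which differs by $O(1)$ multiplicative factors between face points and corner points. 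Thus the spectral representation $\overline{P}_{z}[H_{A_{2}}>\overline{t_{*}}]\approx e^{-\lambda_{1}^{D}\overline{t_{*}}}(f_{1},\mathbf{1})_{l^{2}(\pi^{D})}f_{1}(z)$ does not help, and the iteration you sketch cannot produce the required $N^{-c}$ bound from this ingredient.

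What actually rescues your strategy---and what the paper does directly, without ever introducing $q$---is the observation that $q(z)=1-O(N^{-c})$ uniformly for $z\in\partial A_{2}$. Indeed, from $z\in\partial A_{2}$ one has $\overline{P}_{z}[H_{A_{1}}<T_{A_{3}}]\le N^{-c}$ by the standard escape estimate (cf.\ (\ref{eq:escape1})), and once outside $A_{3}$ one stays outside $A_{2}$ for time $\overline{t_{*}}$ with probability $\ge 1-N^{-c'}$ by (\ref{eq:hittingtimebig1}); hence $q(z)\ge(1-N^{-c})(1-N^{-c'})$, while trivially $q(z)\le 1$. The paper packages this as the two-sided bound
\[
(1-N^{-c})\,e_{A_{1}}(y)\;\le\;\overline{P}_{y}[V<\widetilde{H}_{A_{1}}]\;\le\;(1+N^{-c'})\,e_{A_{1}}(y),
\]
coming from the inclusions $\{T_{A_{3}}<\widetilde{H}_{A_{1}}\}\cap\{H_{A_{2}}\circ\theta_{T_{A_{3}}}>\overline{t_{*}}\}\subset\{V<\widetilde{H}_{A_{1}}\}\subset\{T_{A_{2}}<\widetilde{H}_{A_{1}}\}$ together with (\ref{eq:emest}) and (\ref{eq:hittingtimebig1}); summing over $y\in\partial_{i}A_{1}$ and dividing gives (\ref{quni4}) in two lines. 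No spectral expansion, no Harnack, and no iteration are needed.
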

\begin{proof}
For any $y\in\partial_{i}A_{1}$, by (\ref{eq:hittingtimebig1}) and
the strong Markov property applied at time $T_{A_{3}}$, we obtain
that
\begin{align}
\begin{split}\overline{P}_{y}[V<\widetilde{H}_{A_{1}}] & \overset{{\rm Markov}}{\underset{(\ref{eq:Vdef})}{\geq}}\;\,\overline{P}_{y}[T_{A_{3}}<\widetilde{H}_{A_{1}}]\inf_{x\in U^{N}\backslash A_{3}}\overline{P}_{x}[H_{A_{2}}>\overline{t_{*}}]\\
 & \overset{(\ref{eq:hittingtimebig1})}{\ \,\underset{(\ref{eq:coincidestopped})}{\geq}}\quad P_{y}[T_{A_{3}}<\widetilde{H}_{A_{1}}](1-N^{-c})\overset{(\ref{eq:emest})}{\geq}e_{A_{1}}(y)(1-N^{-c}).
\end{split}
\label{eq:quni3}
\end{align}
On the other hand, $\overline{P}_{y}[V<\widetilde{H}_{A_{1}}]$ is
bounded from above by
\begin{equation}
\overline{P}_{y}[V<\widetilde{H}_{A_{1}}]\leq\overline{P}_{y}[T_{A_{2}}<\widetilde{H}_{A_{1}}]\overset{(\ref{eq:coincidestopped})}{=}P_{y}[T_{A_{2}}<\widetilde{H}_{A_{1}}]\overset{(\ref{eq:emest})}{\leq}e_{A_{1}}(y)(1+N^{-c}).
\end{equation}
Together with (\ref{eq:quni3}), we find that 
\begin{equation}
(1-N^{-c})e_{A_{1}}(y)\leq\overline{P}_{y}[V<\widetilde{H}_{A_{1}}]\leq(1+N^{-c'})e_{A_{1}}(y),\mbox{ for any }y\in\partial_{i}A.\label{eq:capest1}
\end{equation}
Summing over $y\in\partial_{i}A_{1}$ we obtain that 
\begin{equation}
(1-N^{-c})\sum_{y\in\partial_{i}A_{1}}\overline{P}_{y}[V<\widetilde{H}_{A_{1}}]\leq\mathrm{cap}(A_{1})\leq(1+N^{-c'})\sum_{y\in\partial_{i}A_{1}}\overline{P}_{y}[V<\widetilde{H}_{A_{1}}].\label{eq:capest2}
\end{equation}
The claim (\ref{quni4}) follows by combining (\ref{eq:capest1})
and (\ref{eq:capest2}), recalling that by the definition of normalized
equilibrium measure, $\widetilde{e}_{A_{1}}(x)=e_{A_{1}(x)}\big/{\rm cap}(A_{1})$.
\end{proof}
The following proposition shows that the hitting distribution of the
confined walk on $A_{1}$ started from the q.s.d. on $D$ is very
close to the normalized equilibrium measure of $A_{1}$. The proof
of the next proposition is close to the proof of Lemma 3.10 of \cite{TeiWin},
and can be found in the Appendix at the end of this article.
\begin{prop}
\label{prop:qsdem}For large $N$ and any $x_{0}\in\Gamma^{N}$ (recall
that $A_{1}$ tacitly depends on $x_{0}$) one has
\begin{equation}
\sup_{x\in\partial_{i}A_{1}}\left|\frac{\overline{P}_{\sigma}[X_{H_{A_{1}}}=x]}{\widetilde{e}_{A_{1}}(x)}-1\right|\leq N^{-c}.\label{eq:quasi}
\end{equation}

\end{prop}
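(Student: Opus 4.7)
The plan is to decompose $\overline{P}_\sigma[X_{H_{A_1}} = x]$ by conditioning on which excursion of the walk into $A_2$ first succeeds in reaching $A_1$, and then to identify each contribution with $\widetilde{e}_{A_1}(x)$ through a classical harmonic-measure estimate at mesoscopic scale separation. Because $r_1 < r_2$, the set $A_1$ is strictly interior to $A_2$, and since $\sigma$ is supported on $D = U^N \setminus A_2$, every trajectory enters $A_2$ through $\partial_i A_2$ at least once before hitting $A_1$. Let $\tau_k^{A_2}$ denote the $k$-th entry time of the walk to $A_2$ and let $K$ be the first $k$ for which $H_{A_1} \circ \theta_{\tau_k^{A_2}} < T_{A_2} \circ \theta_{\tau_k^{A_2}}$, i.e.~the index of the entry on which the walk finally reaches $A_1$ before exiting $A_2$. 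Applying the strong Markov property at each $\tau_k^{A_2}$ and using (\ref{eq:coincidestopped}) to replace the confined walk by the simple random walk inside $A_6 \supset A_2$, one obtains
\begin{equation}
\overline{P}_\sigma[X_{H_{A_1}} = x] \;=\; \sum_{y \in \partial_i A_2} \mu(y) \cdot P_y\bigl[X_{H_{A_1}} = x \,\big|\, H_{A_1} < T_{A_2}\bigr],
\end{equation}
where $\mu(y) = \sum_{k \geq 1} \overline{P}_\sigma[K \geq k,\, X_{\tau_k^{A_2}} = y] \, P_y[H_{A_1} < T_{A_2}]$ is a probability measure on $\partial_i A_2$: indeed $\sum_y \mu(y) = \sum_k \overline{P}_\sigma[K = k] = \overline{P}_\sigma[K < \infty] = 1$, since the confined walk is recurrent on the finite set $U^N$.

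The core of the proof is then the following harmonic-measure estimate: uniformly for $y \in \partial_i A_2$ and $x \in \partial_i A_1$,
\begin{equation}
\left|\frac{P_y\bigl[X_{H_{A_1}} = x \,\big|\, H_{A_1} < T_{A_2}\bigr]}{\widetilde{e}_{A_1}(x)} \,-\, 1\right| \;\leq\; N^{-c}.
\end{equation}
The geometric input is the scale separation $d_\infty(y, A_1) \gtrsim N^{r_2}$, which is far larger than the diameter $\sim N^{r_1}$ of $A_1$. I would prove it by writing the joint probability $P_y[X_{H_{A_1}} = x,\, H_{A_1} < T_{A_2}]$ as a weighted sum over neighbors $z$ of $x$ of the killed Green function $g^{A_2 \setminus A_1}(y, z)$ (via reversibility of the simple random walk on $A_2$), and then invoking the discrete Harnack principle: because $y$ lies at distance $\gtrsim N^{r_2}$ from the small set $\partial A_1$, $g^{A_2 \setminus A_1}(y, \cdot)$ is nearly constant on $\partial A_1$ up to a relative error of order $N^{r_1 - r_2}$. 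The near-identification of the resulting boundary weights with $e_{A_1}(x)$ is provided by (\ref{eq:emest}); dividing by the total conditioning probability $P_y[H_{A_1} < T_{A_2}]$ then yields the claim. This step plays the role at mesoscopic scale of Lemma 3.10 of \cite{TeiWin}.

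Combining the decomposition with the key estimate and using $\sum_y \mu(y) = 1$ yields $\overline{P}_\sigma[X_{H_{A_1}} = x] = \widetilde{e}_{A_1}(x)\bigl(1 + O(N^{-c})\bigr)$, which is (\ref{eq:quasi}). The main obstacle is proving the harmonic-measure estimate with a sharp polynomial error $N^{-c}$ uniform in $y$ and $x$: this demands careful Green-function and Harnack bookkeeping in the annular region $A_2 \setminus A_1$, together with a precise application of (\ref{eq:emest}) to convert the escape probabilities on $\partial_i A_1$ into the equilibrium weight $\widetilde{e}_{A_1}(x)$ with a polynomially small relative error.
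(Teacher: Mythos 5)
Your overall architecture --- decompose $\overline{P}_\sigma[X_{H_{A_1}}=x]$ over the successive excursions of the confined walk into $A_2$, use (\ref{eq:coincidestopped}) to replace each excursion by a simple random walk excursion, and reduce the claim to a harmonic-measure estimate uniform in the entrance point $y\in\partial_i A_2$ --- is legitimate, and it is a genuinely different route from the paper's, which instead exploits reversibility of the \emph{confined} walk (using that $\pi$ is constant on $\partial_i A_1$), the regeneration at the stopping time $V$, and the quasi-stationary convergence of Proposition \ref{prop:qsdcp}. The estimate you reduce to is also true. The gap is in your proof of that estimate: the claim that $g^{A_2\setminus A_1}(y,\cdot)$ is nearly constant, to relative precision $N^{r_1-r_2}$, on the sites adjacent to $A_1$ is false. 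That Green function vanishes on $A_1$, so its values at sites $z$ at distance $1$ from $\partial_i A_1$ are dictated by the local geometry of $A_1$ near $z$: writing $g^{A_2\setminus A_1}(y,z)=g^{A_2\setminus A_1}(y,y)\,P_z[H_y<H_{A_1}\wedge T_{A_2}]$, the second factor varies between different $z$ by \emph{polynomial} factors in $N^{r_1}$ (for a box it is of order $N^{-r_1}$ next to the middle of a face and of order $1$ next to a corner), and no interior Harnack estimate at scale $N^{r_2}$ can erase a variation occurring at distance $1$ from the killing set. Your mechanism would therefore produce a hitting measure roughly proportional to the number of exterior neighbours of $x$, i.e.\ essentially uniform on $\partial_i A_1$, whereas $\widetilde{e}_{A_1}$ is very far from uniform; and (\ref{eq:emest}) cannot bridge this, since it compares $P_x[T_{A_2}<\widetilde{H}_{A_1}]$ with $e_{A_1}(x)$, not a neighbour count with $e_{A_1}(x)$.

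The repair is standard but is genuinely where the equilibrium measure must be made to appear. Time-reversal of the killed walk gives $P_y[X_{H_{A_1}}=x,\,H_{A_1}<T_{A_2}]=c(y)\,P_x[H_y<\widetilde{H}_{A_1}\wedge T_{A_2}]$ with $c(y)$ independent of $x$, so the conditional hitting law is proportional in $x$ to $P_x[H_y<\widetilde{H}_{A_1}\wedge T_{A_2}]$. One then decomposes this probability at the exit of an intermediate box $B=B_\infty(x_0,\lfloor N^{(r_1+r_2)/2}\rfloor)$: the function $w\mapsto P_w[H_y<H_{A_1}\wedge T_{A_2}]$ has relative oscillation $N^{-c}$ on $\partial B$, because $\partial B$ is well separated from both $A_1$ and $y$ (this is the correct place for the Harnack/gradient input, applied to the harmonic extension across $A_1$ plus a $(\mathrm{cap}(A_1)\cdot N^{-(r_1+r_2)(d-2)/2})$-size correction), while $P_x[T_B<\widetilde{H}_{A_1}]=e_{A_1}(x)(1+O(N^{-c}))$ by the argument giving (\ref{eq:emest}). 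With this substitution your decomposition over excursions, together with $\sum_y\mu(y)=1$, does yield (\ref{eq:quasi}); as written, however, the central step of your proposal does not work.
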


\section{{\normalsize{\label{sec:coupling}}}Chain coupling of excursions}

In this section we prove in Theorem \ref{thm:rwdisconnection} that
the tilted random walk disconnects $K_{N}$ from infinity with a probability,
which tends to $1$ as $N$ tends to infinity. For this purpose, we
show that the confined random walk visits the mesoscopic boxes $A_{1}$
centered at $\Gamma^{N}$(defined in (\ref{eq:boxdef})) sufficiently
often so that at time $T_{N}$ the trace of the walk ``locally'' dominates
(via a chain of couplings) random interlacements with intensity higher
than $u_{**}$. Hence, it disconnects in each such box the center
from its boundary with very high probability. Some arguments in this
section are based on Section 4 of \cite{TeiWin}, with necessary adaptions.
In this section the constants tacitly depend on $\delta$, $\eta$,
$\epsilon$ and $R$ (see (\ref{eq:deltadef}) and (\ref{eq:Rr})),
$r_{1}$, $r_{2}$, $r_{3}$, $r_{4}$ and $r_{5}$ (see (\ref{eq:rchoice})). 

Throughout this section we fix $x_{0}\in\Gamma^{N}$, the center of
the boxes $A_{1}$ through $A_{6}$, except in Proposition \ref{prop:tiltedblocking}
and Theorem \ref{thm:rwdisconnection}. 

\vspace{0.3cm}

We recall the definition of $V$ in (\ref{eq:Vdef}). For a path in
$\Gamma(U^{N})$, we denote by $R_{k}$ and $V_{k}$ the successive
entrance times $H_{A_{1}}$ and stopping times $V$ :
\[
\begin{array}{rclrcl}
R_{1} & = & H_{A_{1}}; & V_{1} & = & R_{1}+V\circ\theta_{R_{1}};\qquad\textrm{and for }i\geq2,\\
R_{i} & = & V_{i-1}+H_{A_{1}}\circ\theta_{V_{i-1}}; & V_{i} & = & R_{i}+V\circ\theta_{R_{i}}.
\end{array}
\]
Colloquially, we call such sections $X_{[R_{i},V_{i})}$ ``long excursions''
in contrast to the ``short excursions'' we will later define (see
above (\ref{eq:I2'def})). We set 
\begin{equation}
J=\lfloor(1+\epsilon/2)u_{**}\mathrm{cap}(A_{1})\rfloor.\label{eq:defofk}
\end{equation}
The next proposition shows that, with high probability, the confined
random walk has already made at least $J$ ``long excursions'' before
time $T_{N}$.
\begin{prop}
\label{prop:enoughexcursions}For large $N$, one has
\begin{equation}
\overline{P}_{0}[R_{J}\geq T_{N}]\leq e^{-N^{c}}.\label{eq:enoughexcursion}
\end{equation}

\end{prop}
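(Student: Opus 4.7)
The idea is an exponential Markov (Chernoff) inequality exploiting a strict gap between $J\cdot\overline{E}_\pi[H_{A_1}]$ and $T_N$. By Propositions \ref{prop:entranceinvup} and \ref{prop:entranceinvlb}, $\overline{E}_\pi[H_{A_1}] = (1+O(N^{-c}))\,T_N/(u_{**}(1+\epsilon)\mathrm{cap}(A_1))$, so with the choice (\ref{eq:defofk}) of $J$,
\[
J\cdot\overline{E}_\pi[H_{A_1}] \;\leq\; \tfrac{1+\epsilon/2}{1+\epsilon}\,T_N\,(1+o(1)) \;\leq\; (1-c\epsilon)\,T_N,
\]
which is the gap that Chernoff will feed on. Moreover, the uniform bound $\sup_{x\in U^N}\overline{E}_x[H_{A_1}]\leq (1+N^{-c'})\overline{E}_\pi[H_{A_1}]$ from (\ref{Er3}) in the proof of Lemma \ref{lem:fA1lb} shows that this gap is insensitive to the (random) starting point of each excursion.

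I would decompose $R_J = R_1 + \sum_{i=1}^{J-1}(L_i + S_{i+1})$ with $L_i := V_i - R_i$ and $S_{i+1} := R_{i+1} - V_i$. The strong Markov property applied successively at the stopping times $R_i$ and $V_i$ yields
\[
\overline{E}_0\bigl[e^{\theta R_J}\bigr]\;\leq\; M_S(\theta)^{J}\, M_L(\theta)^{J-1},
\]
where $M_S(\theta):=\sup_{x\in U^N}\overline{E}_x[e^{\theta H_{A_1}}]$ and $M_L(\theta):=\sup_{x\in A_1}\overline{E}_x[e^{\theta V}]$. For $M_S$, I would combine the geometric tail $\overline{P}_x[H_{A_1}>t]\leq C e^{-\lambda^{A_1} t}$ coming from the spectral gap of the chain killed on $A_1$ (a variant of Lemma \ref{lem:quasispectrum}) with $1/\lambda^{A_1}\leq (1+o(1))\overline{E}_\pi[H_{A_1}]$ (obtainable by an argument parallel to Proposition \ref{prop:qsdcp}), to get, for $0<\theta<c/\overline{E}_\pi[H_{A_1}]$,
\[
M_S(\theta)\;\leq\;\frac{1}{1-\theta(1+N^{-c})\overline{E}_\pi[H_{A_1}]}.
\]
For $M_L$: after entering $A_1\subset A_2$ the walk exits $A_2$ in time of order $N^{2r_2}$ and by (\ref{eq:hittingtimebig1}), together with a standard escape estimate from $\partial A_2$ to $\partial A_3$ without returning to $A_2$, each attempt to remain outside $A_2$ for a stretch of $\overline{t_*}$ succeeds with probability bounded below by a positive constant. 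Hence the number of attempts is geometrically dominated and each attempt has length $O(\overline{t_*})$, giving $\log M_L(\theta)\leq C\theta\overline{t_*}$ for $\theta\leq c/\overline{t_*}$.

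Choosing $\theta=c_0\epsilon/\overline{E}_\pi[H_{A_1}]\asymp\epsilon\,\mathrm{cap}(A_1)/T_N$ with $c_0$ small, one has $\theta\overline{t_*}\to 0$ since $r_1<1$ in (\ref{eq:rchoice}) and $\overline{t_*}=N^2\log^2 N$ by (\ref{eq:regen}), so
\[
J\log M_S(\theta)+J\log M_L(\theta)\;\leq\;\theta J\overline{E}_\pi[H_{A_1}](1+O(\epsilon)+O(N^{-c}))+o(\theta T_N)\;\leq\;(1-c'\epsilon)\theta T_N,
\]
and the exponential Markov inequality yields
\[
\overline{P}_0[R_J\geq T_N]\;\leq\; e^{-\theta T_N}\bigl(M_S(\theta)M_L(\theta)\bigr)^{J}\;\leq\; e^{-c'\epsilon\theta T_N}\;\leq\; e^{-c''\epsilon^2\mathrm{cap}(A_1)}\;\leq\; e^{-N^{c}},
\]
using $\mathrm{cap}(A_1)\geq c\,N^{(d-2)r_1}$ from (\ref{eq:capacityorder}).

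The main obstacle is securing the sharp constant $(1+N^{-c})$ in the exponential moment bound on $H_{A_1}$: a naive Chebyshev-based bound only produces $M_S(\theta)\leq C_0/(1-C_0\theta\overline{E}_\pi[H_{A_1}])$ with a bulk constant $C_0>1$, which would immediately wipe out the $\epsilon$-gap. Getting the constant down to $1+o(1)$ requires a precise exponential tail for $\overline{P}_x[H_{A_1}>t]$ with rate $(1+o(1))/\overline{E}_\pi[H_{A_1}]$, resting on the sharp hitting-time estimates of Section 3 and the q.s.d. analysis of Section 4, in the spirit of the excursion-counting arguments in \cite{TeiWin}. A secondary subtlety is the uniform control of $M_L$ across possible repeated re-entries of $A_2$, which is handled by (\ref{eq:hittingtimebig1}) and the aforementioned escape estimate.
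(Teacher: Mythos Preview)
Your strategy---Chernoff on $R_J$ after splitting into hitting times $H_{A_1}$ and return durations $V$, exploiting the gap $J\,\overline E_\pi[H_{A_1}]\le(1-c\epsilon)T_N$---is exactly the paper's (see (\ref{eld3})--(\ref{poisson0})), and you correctly isolate the crux: the exponential moment of $H_{A_1}$ must carry the sharp constant $1+o(1)$. Where you diverge is in how to secure it. Your route via the principal eigenvalue of the killed chain and the q.s.d.\ machinery of Section~4 is plausible but not free: an exponential tail $\overline P_x[H_{A_1}>t]\le Ce^{-\lambda t}$ with a generic prefactor $C$ only gives $M_S(\theta)\le1+C\theta/(\lambda-\theta)$, and driving $C$ down to $1+o(1)$ uniformly in $x$ is genuine extra work. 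The paper bypasses this entirely with Khas'minski\u{\i}'s moment inequality $\sup_x\overline E_x[H_{A_1}^n]\le n!\,t_N^n$, $t_N=\sup_y\overline E_y[H_{A_1}]$, which sums directly to $\sup_x\overline E_x[e^{\theta H_{A_1}}]\le1/(1-\theta t_N)$ with no spectral input; combined with $t_N\le(1+N^{-c})\overline E_\pi[H_{A_1}]$ from (\ref{Er3}) this is precisely the bound you want. Your $M_L$ control is slightly loose (the escape probability from $\partial A_2$ to $\partial A_3$ is $N^{-c}$, not a fixed constant; the paper instead runs the recursion from $T_{A_3}$, where (\ref{eq:hittingtimebig1}) gives success probability $1-N^{-c}$), but either version suffices since only $J\log M_L(\theta)=o(\theta T_N)$ is needed.
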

The proof is deferred to the Appendix at the end of this article because
it is rather technical and similar to the proof of Lemma 4.3 of \cite{TeiWin}.

\vspace{0.3cm}

Next we construct a chain of couplings. Simply speaking, it is a sequence
of couplings involving multiple random sets, in which the preceding
set stochastically dominate the following set  with probability close
(or sometimes equal) to 1. 

We start with the first coupling. The following proposition shows
that one can construct a probability space where $(J-1)$ long excursions
(counted from the second excursion) coincide with high probability
with $(J-1)$ independent long excursions started from the q.s.d.
We write $\overline{P_{2}^{J}}=\otimes_{i=2}^{J}\overline{P}_{\sigma}^{i}$
for the product of $(J-1)$ independent copies of $\overline{P}_{\sigma}$.
We denote by $\mathcal{A}_{i}$ the random set $X_{[R_{i},V_{i})}\cap A_{1}$
and set $\mathcal{A}=\cup_{i=2}^{J}\mathcal{A}_{i}$. 
\begin{prop}
\label{prop:coupling0}For large $N$, there exists a probability
space $(\Omega_{0},\mathcal{B}_{0},Q_{0})$, endowed with a random
set $\mathcal{A}$ with the same law as $\mathcal{A}$ under $\overline{P}_{0}$
and random sets $\check{\mathcal{A}_{i}}$, $i=2,\ldots J$, distributed
as $\check{X}_{[0,V_{1})}^{i}\cap A_{1}$ where for $i\geq2$, $\check{X}^{i}$'s
are i.i.d. distributed as $X$ under $\overline{P}_{\sigma}$, such
that 
\begin{equation}
Q_{0}[\mathcal{A}\neq\check{\mathcal{A}}]\leq e^{-c''\log^{2}N}\label{eq:coupling0}
\end{equation}
where $\check{\mathcal{A}}=\cup_{i=2}^{J}\check{\mathcal{A}}_{i}$. \end{prop}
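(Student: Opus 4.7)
The plan is an iterated maximal coupling, similar to the coupling arguments in Section 4 of \cite{TeiWin}, with Proposition \ref{prop:qsdcp} as the core input. By the definition of $V$ in (\ref{eq:Vdef}), the walk $X$ has stayed in $D=U^N\setminus A_2$ throughout the interval $[V_{i-1}-\overline{t_*},V_{i-1}]$, and the time $\tau_{i-1}:=V_{i-1}-\overline{t_*}$ is a genuine stopping time (the starting time of the first excursion out of $A_2$ after $R_{i-1}$ lasting at least $\overline{t_*}$). Applying the strong Markov property at $\tau_{i-1}$ identifies the conditional distribution of $X_{V_{i-1}}$ given $\mathcal{F}_{\tau_{i-1}}$ and $\{H_{A_2}\circ\theta_{\tau_{i-1}}>\overline{t_*}\}$ as $\overline{P}_{X_{\tau_{i-1}}}[X_{\overline{t_*}}\in\cdot\mid H_{A_2}>\overline{t_*}]$, which by Proposition \ref{prop:qsdcp} lies within $e^{-c\log^2 N}$ total variation distance of $\sigma$, uniformly in $X_{\tau_{i-1}}\in D$.

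Inductively in $i=2,\ldots,J$, I would then perform a maximal coupling at time $V_{i-1}$ between $X_{V_{i-1}}$ and a $\sigma$-distributed partner $\check{X}^i_0$, denoting by $C_i$ the coincidence event, which has conditional probability at least $1-e^{-c\log^2 N}$ by the above. On $C_i$, the strong Markov property at $V_{i-1}$ shows that the shifted trajectory $(X_{V_{i-1}+t})_{t\geq 0}$ is an $\overline{P}_{\check{X}^i_0}$-walk, which I take to be the initial piece of $\check{X}^i$ up to its first $V$-time $V_1^{(i)}:=V_i-V_{i-1}$; the continuation of $\check{X}^i$ beyond $V_1^{(i)}$ is supplied by fresh independent randomness. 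On $C_i^c$, I draw $\check{X}^i$ entirely from an independent reservoir of $\overline{P}_\sigma$-walks. Taking the coupling randomness and the reservoir walks to be independent across $i$, each $\check{X}^i$ is marginally $\overline{P}_\sigma$-distributed, and on $C_i$ the trace in $A_1$ satisfies
\[
\check{\mathcal{A}}_i=\check{X}^i_{[0,V_1^{(i)})}\cap A_1=X_{[V_{i-1},V_i)}\cap A_1=X_{[R_i,V_i)}\cap A_1=\mathcal{A}_i,
\]
where the penultimate equality uses that between $V_{i-1}$ and $R_i$ the walk sits in $D$ and so avoids $A_1\subset A_2$.

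A union bound over the $J-1$ coupling steps then gives $Q_0[\mathcal{A}\neq\check{\mathcal{A}}]\leq(J-1)e^{-c\log^2 N}$, and by (\ref{eq:defofk}) together with (\ref{eq:capacityorder}) we have $J\leq cu_{**}\mathrm{cap}(A_1)\leq c'N^{(d-2)r_1}$, which is polynomial in $N$, so the right-hand side is at most $e^{-c''\log^2 N}$, yielding (\ref{eq:coupling0}).

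The main technical obstacle is the bookkeeping needed to guarantee simultaneously that $X$ keeps its marginal $\overline{P}_0$ law, that each $\check{X}^i$ keeps its marginal $\overline{P}_\sigma$ law, and that the $\check{X}^i$'s are jointly independent, while also enforcing $\mathcal{A}_i=\check{\mathcal{A}}_i$ on $C_i$. Threading the maximal coupling randomness and the reservoir walks through the inductive construction so that all these constraints hold at once is delicate, but proceeds along standard lines, with the strong Markov property at $V_{i-1}$ playing the essential role of decorrelating the future from the past at each step and making the ``success-path'' substitution of the shifted walk for a fresh $\overline{P}_\sigma$-walk legitimate.
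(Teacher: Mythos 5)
Your construction is essentially the paper's: an iterated maximal coupling of the starting point of each long excursion with a $\sigma$-distributed variable, with Proposition \ref{prop:qsdcp} supplying the total-variation estimate (note you should also absorb the factor $|D|\leq cN^{d}$ when passing from the pointwise bound of (\ref{eq:qsdcp}) to total variation), followed by a union bound over the $J-1$ steps with $J$ polynomial in $N$.

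There is, however, one genuine error in your justification of the key distributional identity. The time $\tau_{i-1}=V_{i-1}-\overline{t_{*}}$ is \emph{not} a stopping time: it is the start of the first excursion out of $A_{2}$ that lasts at least $\overline{t_{*}}$, and at the moment such an excursion begins one cannot yet decide whether it will last that long, so the decision requires looking $\overline{t_{*}}$ units into the future. The strong Markov property therefore cannot be invoked at $\tau_{i-1}$ as written. The conclusion you want is nevertheless true, and the repair is the one the paper carries out: introduce $L=\sup\{l\geq0:\tau_{l}\leq V,\,Z_{l}\in A_{2}\}$ (equivalently, decompose over the successive exit times from $A_{2}$, which \emph{are} stopping times), observe that $\{L=l\}$ coincides with $\{Z_{l}\in A_{2},\,\tau_{l}<V,\,H_{A_{2}}\circ\theta_{\tau_{l+1}}>\overline{t_{*}}\}$, and apply the Markov property at the genuine stopping times $\tau_{l+1}$ and $\tau_{l+1}+\overline{t_{*}}$; summing over $l$ and the exit point $Z_{l+1}=x^{+}$ then identifies the conditional law of $X_{V_{i-1}}$ given the past of the chain through the exit point alone, as $\overline{P}_{x^{+}}[X_{\overline{t_{*}}}\in\cdot\,|\,H_{A_{2}}>\overline{t_{*}}]$. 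With this substitution (and noting that your penultimate equality $X_{[V_{i-1},V_{i})}\cap A_{1}=X_{[R_{i},V_{i})}\cap A_{1}$ holds because the walk avoids $A_{1}$ before $R_{i}$ by definition of $R_{i}$, not because it stays in $D$), your argument matches the paper's proof.
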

\begin{proof}
For each $x\in D$, we use Proposition 4.7, p. 50 in \cite{MCMT}
and Proposition \ref{prop:qsdcp} to construct a coupling $q_{x}$
of random variables $\Xi$ with the law of $X_{\overline{t_{*}}}$
under $\overline{P}_{x}[\cdot\big|H_{A_{2}}>\overline{t_{*}}]$ and
$\Sigma$ with the law of $\sigma$ such that 
\begin{equation}
\max_{x\in D}q_{x}[\Xi\neq\Sigma]\leq|D|e^{-c\log^{2}N}\leq e^{-c'\log^{2}N}.\label{eq:dec00}
\end{equation}

As in (\ref{eq:ldef}) we consider $L$ defined by 
\begin{equation}
L=\sup\{l\geq0:\:\tau_{l}\leq V,\, Z_{l}\in A_{2}\}.
\end{equation}
We then introduce $L_{i}=L\circ\theta_{R_{i}}+l_{i}$, where $l_{i}$
satisfies$\tau_{l_{i}}=R_{i}$ for $i\geq1$ as the last step at which
the $i$-th excursion is in $A_{2}$. 

We now construct $Q_{0}$ with the help of (\ref{eq:dec00}) in a
similar fashion to to the proof of Lemma 4.2 in \cite{TeiWin}. The
procedure goes inductively. We start by choosing $x_{1}^{+}\in\partial A_{2}$
according to $\overline{P}_{0}[Z_{L_{1}+1}=\cdot]$. For $i\geq1$,
if $x_{i}^{+}$ is chosen, we choose $x_{i+1}$ and $\check{x}_{i+1}$
points in $D=U^{N}\backslash A_{2}$ according to $q_{x_{i}^{+}}[\Xi=\cdot,\,\Sigma=\cdot]$.
If $x_{i+1}$ and $\check{x}_{i+1}$ coincide (which is the typical
case, i.e., if the coupling is successful at step $i+1$), we choose
$\mathcal{A}_{i+1}=\check{\mathcal{A}}_{i+1}$ subsets of $A_{1}$
and $x_{i+1}^{+}=\check{x}_{i+1}^{+}$ points in $\partial A_{2}$
according to $\overline{P}_{x_{i+1}}[\mathcal{A}_{1}=\cdot,\, Z_{L_{1}+1}=\cdot]$.
Otherwise, if $x_{i+1}$ differs from $\check{x}_{i+1}$ (which means
that the coupling fails at step $i+1$), then we choose independently
$\mathcal{A}_{i+1},\, x_{i+1}^{+}$ according to $\overline{P}_{x_{i+1}}[\mathcal{A}_{1}=\cdot,\, Z_{L_{1}+1}=\cdot]$
and $\check{\mathcal{A}}_{i+1},\,\check{x}_{i+1}^{+}$ according to
$\overline{P}_{\check{x}_{i+1}}[\mathcal{A}_{1}=\cdot,\, Z_{L_{1}+1}=\cdot]$.
In both cases, we repeat the above procedure until step $J$. Then
we write $\mathcal{A}=\cup_{i=2}^{J}\mathcal{A}_{i}$ and $\check{\mathcal{A}}=\cup_{i=2}^{J}\check{\mathcal{A}}_{i}$.

By a procedure as in the proof of Lemma 4.2 in \cite{TeiWin}, (we
replace $A$ by $A_{1}$, $B$ by $A_{2}$, $t_{*}$ by $\overline{t_{*}}$,
$\mathbb{T}$ by $U^{N}$, $X_{i}$ by $Z_{i}$, $Y_{t}$ by $X_{t}$,
$k$ by $J$, $U_{1}$ by $V_{1}$, $\bar{x}_{i}$ and $\bar{x}_{i}^{+}$
by $\check{x}_{i}$ and $\check{x}_{i}^{+}$), we can check that $Q_{0}$
is a coupling of $\mathcal{A}$ and $\check{\mathcal{A}}$, and the
probability that the coupling fails has an upper bound 
\begin{equation}
Q_{0}[\mathcal{A}\neq\check{\mathcal{A}}]\leq(J-1)\max_{x\in D}q_{x}[\Xi\neq\Sigma]\overset{(\ref{eq:defofk})}{\underset{(\ref{eq:dec00})}{\leq}}c'N^{d-2}e^{-c\log^{2}N}\leq e^{-c''\log^{2}N},
\end{equation}
which is exactly what we want.
\end{proof}
On an auxiliary probability space $(\mathcal{O}_{1},\mathcal{F}_{1},\mathcal{P}^{\mathcal{I}_{1}}$),
we denote by $\eta_{1}$ the Poisson point process on $\Gamma(U^{N})$
with intensity $(1+\epsilon/3)u_{**}\mathrm{cap}(A_{1})\kappa_{1}$,
where $\kappa_{1}$ is defined as the law of the stopped process $X_{(H_{A_{1}}+\cdot)\wedge V_{1}}$
under $\overline{P}_{\sigma}$. In other words, $\kappa_{1}$ is the
law of ``long excursions'' started from $\sigma$ and recorded from
the first time it enters $A_{1}$. We denote by 
\begin{equation}
\mathcal{I}_{1}=\cup_{\gamma\in\mathrm{supp}(\eta_{1})}\mathrm{Range}(\gamma)\cap A_{1}
\end{equation}
the trace of $\eta_{1}$ on $A_{1}$. In the next proposition we construct
a second coupling such that $\check{\mathcal{A}}$ dominates $\mathcal{I}_{1}$
with high probability.
\begin{prop}
\label{prop:coupling1}There exists a probability space $(\Omega_{1},\mathcal{B}_{1},Q_{1})$,
endowed with random sets $\mathcal{I}_{1}$ with the same law as $\mathcal{I}_{1}$
under $\mathcal{P}^{\mathcal{I}_{1}}$ and $\check{\mathcal{A}}$
with the same law as $\check{\mathcal{A}}$ under $\overline{P_{2}^{J}}$,
such that
\begin{equation}
Q_{1}[\check{\mathcal{A}}\supseteq\mathcal{I}_{1}]\geq1-e^{-N^{c}}.\label{eq:coupling1}
\end{equation}
\end{prop}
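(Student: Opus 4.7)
The plan is to identify both $\check{\mathcal{A}}$ and $\mathcal{I}_1$ with unions of traces on $A_1$ of i.i.d.\ copies of excursions distributed as $\kappa_1$: $J-1$ deterministically many for $\check{\mathcal{A}}$, and a Poisson-distributed number $N'$ with mean $\lambda:=(1+\epsilon/3)u_{**}\mathrm{cap}(A_1)$ for $\mathcal{I}_1$. Since $J-1\approx(1+\epsilon/2)u_{**}\mathrm{cap}(A_1)$ exceeds $\lambda$ by a multiplicative factor bounded away from $1$, a Chernoff bound for the Poisson law will force $\{N'\le J-1\}$ with probability at least $1-e^{-N^c}$, and on this event a sample-by-sample coupling delivers $\mathcal{I}_1\subseteq\check{\mathcal{A}}$.

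For the identification step, note that under $\overline{P}_\sigma$ the walk starts in $D=U^N\setminus A_2$, so it cannot visit $A_1$ before $R_1=H_{A_1}$; by irreducibility of the finite-state confined chain one has $H_{A_1}<\infty$ almost surely, hence $\check{\mathcal{A}}_i=\check{X}^i_{[R_1,V_1)}\cap A_1$. By the very definition of $\kappa_1$ as the law of the stopped process $X_{(H_{A_1}+\cdot)\wedge V_1}$ under $\overline{P}_\sigma$, each $\check{\mathcal{A}}_i$ has the same law as $\mathrm{Range}(\gamma)\cap A_1$ with $\gamma\sim\kappa_1$, and these are independent across $i$ by independence of the $\check{X}^i$'s. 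On the other hand, since $\kappa_1$ is a probability measure, the standard construction realises $\eta_1$ as $N'$ i.i.d.\ samples from $\kappa_1$ with $N'\sim\mathrm{Poisson}(\lambda)$, so that $\mathcal{I}_1$ is the union of the traces of these samples on $A_1$.

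For the coupling itself, on a common space $(\Omega_1,\mathcal{B}_1,Q_1)$ I would take $\check{\gamma}_1,\ldots,\check{\gamma}_{J-1}$ i.i.d.\ with law $\kappa_1$, an auxiliary i.i.d.\ sequence $\gamma_1',\gamma_2',\ldots$ with law $\kappa_1$, and $N'\sim\mathrm{Poisson}(\lambda)$, all three mutually independent; then set $\check{\mathcal{A}}=\bigcup_{k=1}^{J-1}\mathrm{Range}(\check{\gamma}_k)\cap A_1$, define $\gamma_k=\check{\gamma}_k$ for $k\le\min(N',J-1)$ and $\gamma_k=\gamma_{k-(J-1)}'$ for $k>J-1$, and let $\mathcal{I}_1=\bigcup_{k=1}^{N'}\mathrm{Range}(\gamma_k)\cap A_1$. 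The marginals are correct by the identification step, and on $\{N'\le J-1\}$ the inclusion $\mathcal{I}_1\subseteq\check{\mathcal{A}}$ holds by construction. Since $\mathrm{cap}(A_1)\ge cN^{(d-2)r_1}$ by \eqref{eq:capacityorder}, one has $\lambda\to\infty$ and $(J-1)/\lambda\ge 1+\alpha$ for large $N$, with $\alpha:=(\epsilon/6)/(1+\epsilon/3)>0$ a positive constant; the classical Chernoff bound for Poisson variables then yields
\[
Q_1[N'>J-1]\le e^{-c(\alpha)\lambda}\le e^{-c'N^{(d-2)r_1}}\le e^{-N^c}
\]
for any $c\in(0,(d-2)r_1)$ and $N$ large, delivering the desired lower bound on $Q_1[\check{\mathcal{A}}\supseteq\mathcal{I}_1]$. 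I do not expect a serious obstacle: the only care point is the clean identification of the law of each $\check{\mathcal{A}}_i$ with the $\kappa_1$-trace on $A_1$, which however follows directly from the definitions.
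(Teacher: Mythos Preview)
Your proof is correct and follows essentially the same approach as the paper: both realise $\check{\mathcal{A}}$ and $\mathcal{I}_1$ as unions of traces of i.i.d.\ $\kappa_1$-excursions (deterministically $J-1$ many versus Poisson$(\lambda)$ many), and conclude via a Poisson tail bound exploiting the gap between $1+\epsilon/2$ and $1+\epsilon/3$. The paper's construction is marginally simpler in that it uses a single infinite i.i.d.\ sequence $\{\check{X}^i\}_{i\ge 1}$ and reads off both random sets from the same sequence (so the inclusion holds automatically on $\{\xi+1\le J\}$), whereas you introduce an auxiliary sequence $\gamma'_k$ to handle the overflow case; your explicit Chernoff computation with $\alpha=(\epsilon/6)/(1+\epsilon/3)$ and the lower bound $\mathrm{cap}(A_1)\ge cN^{(d-2)r_1}$ is a welcome addition over the paper's appeal to a ``standard estimate''.
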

\begin{proof}
We pick a Poisson random variable $\xi$ with parameter $(1+\epsilon/3)u_{**}\mathrm{cap}(A_{1})$.
Then we generate (independently from $\xi$) an infinite sequence
$\{\check{X}^{i}\}_{i\geq1}$ of $\mbox{i.i.d.}$ confined walks under
$\overline{P}_{\sigma}$. We then let $\mathrm{\mathcal{I}}_{1}\sim\cup_{i=2}^{\xi+1}\check{X}_{[0,V_{1})}^{i}\cap A_{1}$
and $\check{\mathcal{A}}=\cup_{i=2}^{J}\check{X}_{[0,V_{1})}^{i}\cap A_{1}$,
both having the respective required laws. Moreover $\{\check{\mathcal{A}}\supseteq\mathcal{I}_{1}\}=\{J\geq\xi+1\}$,
by the definition of $J$ (see (\ref{eq:defofk})) and a standard
estimate on the deviation of Poisson random variables, we have
\begin{equation}
Q_{1}[\check{\mathcal{A}}\supseteq\mathcal{I}_{1}]=Q_{1}[J\geq\xi+1]\geq1-e^{-N^{c}},
\end{equation}
which is exactly (\ref{eq:coupling1}).
\end{proof}
Now on another auxiliary probability space $(\mathcal{O}_{2},\mathcal{F}_{2},\mathcal{P}^{\mathcal{I}_{2}}$),
we denote by $\eta_{2}$ the Poisson point process on $\Gamma(U^{N})$
with intensity $(1+\epsilon/4)u_{**}\mathrm{cap}(A_{1})\kappa_{2}$,
where $\kappa_{2}$ is defined as the law of the stopped process $X_{\cdot\wedge V_{1}}$
under $\overline{P}_{\widetilde{e}_{A_{1}}}$. In other words, it
is the law of ``long excursions'' started from the normalized equilibrium
measure of $A_{1}$ (note that, since in this case the excursions
start from inside $A_{1}$, we start recording directly from time
$0$). We denote by 
\begin{equation}
\mathcal{I}_{2}=\cup_{\gamma\in\mathrm{supp}(\eta_{2})}\mathrm{Range}(\gamma)\cap A_{1}
\end{equation}
the trace of $\eta_{2}$ on $A_{1}$. The next proposition and corollary
construct the third coupling so that $\mathcal{I}_{1}$ dominates
$\mathcal{I}_{2}$ almost surely. This is shown by proving that the
intensity measure of $\mathcal{I}_{1}$ is bigger than that of $\mathcal{I}_{2}$
with the help of Proposition \ref{prop:qsdem}.
\begin{prop}
\label{prop:coupling2prep}For large $N$, one has 
\begin{equation}
(1+\frac{\epsilon}{3})\kappa_{1}\geq(1+\frac{\epsilon}{4})\kappa_{2}.\label{eq:k1overk2}
\end{equation}
\end{prop}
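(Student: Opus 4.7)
The plan is to use the strong Markov property to decompose $\kappa_1$ as a mixture of the same ``post-$A_1$'' excursion laws that build $\kappa_2$, but weighted by the hitting distribution $\overline{P}_\sigma[X_{H_{A_1}}=\cdot]$ instead of $\widetilde{e}_{A_1}$, and then invoke Proposition \ref{prop:qsdem} to compare the two weight distributions.

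More precisely, since $A_1 \subset A_2$ and the confined walk on the finite state space $U^N$ is irreducible, under $\overline{P}_\sigma$ we have $H_{A_1} < \infty$ almost surely. Applying the strong Markov property at $H_{A_1}$ yields the disintegration
\begin{equation*}
\kappa_1(\cdot) \;=\; \sum_{x \in \partial_i A_1} \overline{P}_\sigma\!\left[X_{H_{A_1}} = x\right]\, \overline{P}_x\!\left[X_{\cdot \wedge V_1} \in \cdot\right],
\end{equation*}
while by construction
\begin{equation*}
\kappa_2(\cdot) \;=\; \sum_{x \in \partial_i A_1} \widetilde{e}_{A_1}(x)\, \overline{P}_x\!\left[X_{\cdot \wedge V_1} \in \cdot\right].
\end{equation*}
By Proposition \ref{prop:qsdem}, for large $N$ and every $x \in \partial_i A_1$ one has $\overline{P}_\sigma[X_{H_{A_1}} = x] \geq (1 - N^{-c})\,\widetilde{e}_{A_1}(x)$. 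Comparing the two disintegrations term by term gives the measure inequality $\kappa_1 \geq (1 - N^{-c})\,\kappa_2$.

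It then remains to note that $(1+\epsilon/3)(1-N^{-c}) \geq 1 + \epsilon/4$ once $N$ is large enough (which holds because $(1+\epsilon/3) > (1+\epsilon/4)$ strictly, so the multiplicative error $1-N^{-c}$ can be absorbed). Multiplying the measure inequality by $(1+\epsilon/3)$ then yields (\ref{eq:k1overk2}).

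The only subtle point is guaranteeing that the strong Markov decomposition is genuinely term-by-term and not just in total variation; this is immediate here because both $\kappa_1$ and $\kappa_2$ are supported on paths in $\Gamma(U^N)$ that start in $\partial_i A_1$, so conditioning on the starting point gives the same factorization on both sides. I expect no real obstacle, as Proposition \ref{prop:qsdem} does all the heavy lifting; the step is essentially a bookkeeping reduction of a statement about excursion laws to the already-proved closeness between the $\sigma$-hitting distribution of $A_1$ and $\widetilde{e}_{A_1}$.
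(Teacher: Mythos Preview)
Your proof is correct and follows essentially the same approach as the paper: both argue via the strong Markov property at $H_{A_1}$ that $\kappa_1$ and $\kappa_2$ are mixtures of the same excursion laws $\overline{P}_x[X_{\cdot\wedge V_1}\in\cdot]$, $x\in\partial_iA_1$, with respective weights $\overline{P}_\sigma[X_{H_{A_1}}=x]$ and $\widetilde{e}_{A_1}(x)$, and then invoke Proposition~\ref{prop:qsdem} to absorb the multiplicative error $1-N^{-c}$ into the gap between $1+\epsilon/3$ and $1+\epsilon/4$. The paper packages the same computation as a Radon--Nikodym derivative $d\kappa_1/d\kappa_2=\phi(X_0)$ with $\phi(x)=\overline{P}_\sigma[X_{H_{A_1}}=x]/\widetilde{e}_{A_1}(x)$, which is just a reformulation of your term-by-term comparison.
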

\begin{proof}
By the definition of $\kappa_{1}$ and $\kappa_{2}$, and the strong
Markov property applied at time $H_{A_{1}}$, we can represent the United-Nikokym derivative of $\kappa_1$ and $\kappa_2$ through a function of the starting point of the trajectory
\begin{equation}
\frac{d\mathrm{\kappa_{1}}}{d\kappa_{2}}=\phi(X_0)\mbox{ where }\phi(x)=\frac{\overline{P}_{\sigma}[X_{H_{A_{1}}}=x]}{\widetilde{e}_{A_{1}}(x)}\mbox{ for all }x\in\partial_i A_1\mbox{ and 0 otherwise.}
\end{equation}
Hence we obtain, via (\ref{eq:quasi}), that for large $N$,
\begin{equation}
\frac{d(\mathrm{\kappa_{1}-\kappa_{2})}}{d\kappa_{2}}=\phi(X_0)-1\geq-N^{-c}\geq\frac{-\epsilon/12}{(1+\epsilon/3)}\quad\kappa_2\mbox{-a.s.}
\end{equation}
This implies (\ref{eq:k1overk2}) after rearrangement.
\end{proof}
As a consequence, we have the following corollary.
\begin{cor}
\label{cor:coupling2}For large $N$, there exists a probability space
$(\Sigma_{2},\mathcal{B}_{2},Q_{2})$ endowed with random sets $\mathcal{I}_{1}$
with the same law as $\mathcal{I}_{1}$ under $P^{\mathcal{I}_{1}}$
and $\mathcal{I}_{2}$ with the same law as $\mathcal{I}_{2}$ under
$P^{\mathcal{I}_{2}}$, such that 
\begin{equation}
\mathcal{I}_{1}\supseteq\mathcal{I}_{2}\qquad Q_{2}\textrm{-a.s.}\label{eq:coupling2}
\end{equation}
\end{cor}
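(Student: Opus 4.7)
The plan is to deduce the coupling directly from Proposition \ref{prop:coupling2prep} via the standard superposition property of Poisson point processes. Write $\mu_i = (1+\epsilon/(2+i))\, u_{**}\mathrm{cap}(A_1)\,\kappa_i$ for $i=1,2$, so that $\eta_i$ is a Poisson point process on $\Gamma(U^N)$ with intensity measure $\mu_i$. Both $\mu_1$ and $\mu_2$ are finite measures, since $\kappa_1, \kappa_2$ are probability measures on $\Gamma(U^N)$ and $\mathrm{cap}(A_1) < \infty$.

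By Proposition \ref{prop:coupling2prep}, for $N$ large one has $\mu_1 \geq \mu_2$ as measures on $\Gamma(U^N)$, so $\nu := \mu_1 - \mu_2$ is a non-negative finite measure on $\Gamma(U^N)$. On a probability space $(\Sigma_2,\mathcal{B}_2,Q_2)$ I would take two independent Poisson point processes $\widehat\eta_2$ and $\widehat\eta'$ with respective intensity measures $\mu_2$ and $\nu$, and define $\widehat\eta_1 = \widehat\eta_2 + \widehat\eta'$. By the superposition theorem for independent Poisson point processes, $\widehat\eta_1$ is a Poisson point process with intensity $\mu_2 + \nu = \mu_1$. Setting
\begin{equation}
\mathcal{I}_1 = \bigcup_{\gamma \in \mathrm{supp}(\widehat\eta_1)} \mathrm{Range}(\gamma) \cap A_1, \qquad \mathcal{I}_2 = \bigcup_{\gamma \in \mathrm{supp}(\widehat\eta_2)} \mathrm{Range}(\gamma) \cap A_1,
\end{equation}
we obtain random subsets of $A_1$ with the required marginal laws, and since $\mathrm{supp}(\widehat\eta_2) \subseteq \mathrm{supp}(\widehat\eta_1)$ holds $Q_2$-a.s.~by construction, one has $\mathcal{I}_1 \supseteq \mathcal{I}_2$ almost surely, which is exactly (\ref{eq:coupling2}).

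There is no real obstacle here: the proof is a direct application of the thinning/superposition property of Poisson point processes once the measure inequality of Proposition \ref{prop:coupling2prep} is available. The only mild point worth mentioning is to verify that $\kappa_1, \kappa_2$ are genuine probability measures on $\Gamma(U^N)$ (which follows from the fact that $\overline{P}_\sigma$ and $\overline{P}_{\widetilde{e}_{A_1}}$ are probability measures and that the stopped processes take values in $\Gamma(U^N)$), so that the Poisson point process manipulations are meaningful.
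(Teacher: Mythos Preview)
Your proof is correct and follows essentially the same approach as the paper: both exploit the domination of intensity measures from Proposition \ref{prop:coupling2prep} and realize $\eta_1$ as the superposition of $\eta_2$ with an independent Poisson point process of intensity equal to the difference. Your version is in fact slightly more carefully written, as you include the factor $u_{**}\mathrm{cap}(A_1)$ in the difference measure, which the paper's definition of $\alpha$ omits (a harmless slip there).
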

\begin{proof}
This follows immediately from the domination of measures. Indeed,
we first construct $\mathcal{I}_{2}$ on some probability space. Then
we consider the positive measure on $\Gamma(U^{N})$ 
\begin{equation}
\alpha=(1+\epsilon/3)\kappa_{1}-(1+\epsilon/4)\kappa_{2},
\end{equation}
and construct (independently from $\mathcal{I}_{2}$) a Poisson point
process $\widehat{\eta}$ on $\Gamma(U^{N})$ with intensity measure
$\alpha$. Then $\mathcal{I}_{1}=(\cup_{\gamma\in\mathrm{supp}(\widehat{\eta})}\mathrm{Range}(\gamma)\cap A_{1})\cup\mathcal{I}_{2}$
has the required law.
\end{proof}
On another auxiliary probability space $(\mathcal{O}'_{2},\mathcal{F}'_{2},\mathcal{P}^{\mathcal{I}'_{2}})$,
we denote by $\eta'_{2}$ the law of the Poisson point process on
$\Gamma(U^{N})$ with intensity $(1+\epsilon/4)u_{**}\mathrm{cap}(A_{1})\kappa'_{2}$,
where $\kappa'_{2}$ is defined as the stopped process $X_{\cdot\wedge T_{A_{2}}}$
under $P_{\widetilde{e}_{A_{1}}}$, or equivalently $\overline{P}_{\widetilde{e}_{A_{1}}}$.
Contrary to the definition of a ``long excursion'', we would like
to call $X_{[H_{A_{1}},T_{A_{2}})}$ a ``short excursion'', since
we stop the excursion earlier than a ``long excursion'' (this is because
$T_{A_{2}}<V_{1}$). In other words, $\kappa'_{2}$ is the measure
of ``short excursions'' started from the normalized equilibrium measure
of $A_{1}$. We denote by 
\begin{equation}
\mathcal{I}'_{2}=\cup_{\gamma\in\mathrm{supp}(\eta'_{2})}\mathrm{Range}(\gamma)\cap A_{1}\label{eq:I2'def}
\end{equation}
the trace of $\eta'_{2}$ in $A_{1}$. Hence we can naturally construct
the fourth coupling such that $\mathcal{I}_{2}$ dominates $\mathcal{I}'_{2}$
almost surely, which is stated in the corollary below.
\begin{cor}
\label{cor:coupling2'}When $N$ is large, there exists a probability
space \textup{$(\Sigma'_{2},\mathcal{B}'_{2},Q'_{2})$}, endowed with
random sets $\mathcal{I}'_{2}$ with the same law as $\mathcal{I}_{2}'$
under $\mathcal{P}^{\mathcal{I}'_{2}}$, and $\mathcal{I}_{2}$ with
the same law as $\mathcal{I}_{2}$ under $\mathcal{P}^{\mathcal{I}{}_{2}}$
such that 
\begin{equation}
\mathcal{I}_{2}\supseteq\mathcal{I}'_{2}\qquad Q'_{2}\textrm{-a.s.}\label{eq:coupling2'}
\end{equation}

\end{cor}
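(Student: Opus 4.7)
The plan is to realise $\mathcal{I}'_2$ as a deterministic function of $\mathcal{I}_2$, exploiting the fact that a ``short excursion'' is just a prefix of the corresponding ``long excursion''. Indeed, under $\overline{P}_{\widetilde{e}_{A_1}}$ the walk starts on $\partial_i A_1 \subset A_2$, so $R_1=0$ and $V_1 = V$. By the definition of $V$ in (\ref{eq:Vdef}), one has $V \geq T_{A_2}$ pathwise, since $V$ requires a stretch of length $\overline{t_*}$ entirely outside $A_2$ after the last visit to $A_2$. Thus every trajectory sampled from $\kappa_2$ contains its $T_{A_2}$-truncation as a prefix.

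First I would construct $\eta_2$ on an auxiliary probability space, yielding $\mathcal{I}_2 = \cup_{\gamma \in \mathrm{supp}(\eta_2)} \mathrm{Range}(\gamma) \cap A_1$ with the correct law. I then define the truncation map $\tau: \Gamma(U^N) \to \Gamma(U^N)$ by $\tau(\gamma) = \gamma_{\cdot \wedge T_{A_2}(\gamma)}$. By the Mapping Theorem for Poisson point processes, the image point process $\tau_*\eta_2$ is Poisson on $\Gamma(U^N)$ with intensity $(1+\epsilon/4)u_{**}\mathrm{cap}(A_1)\,\tau_*\kappa_2$. The key identity $\tau_*\kappa_2 = \kappa'_2$ holds because $\kappa_2$ is the law of $X_{\cdot \wedge V_1}$ under $\overline{P}_{\widetilde{e}_{A_1}}$ and $T_{A_2} \leq V_1$ pathwise, so its truncation at $T_{A_2}$ is simply the law of $X_{\cdot \wedge T_{A_2}}$ under $\overline{P}_{\widetilde{e}_{A_1}}$; moreover $T_{A_2} < T_{A_6}$, hence by (\ref{eq:coincidestopped}) this law coincides with that under $P_{\widetilde{e}_{A_1}}$, which is precisely $\kappa'_2$.

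Defining $\mathcal{I}'_2 = \cup_{\gamma \in \mathrm{supp}(\eta_2)} \mathrm{Range}(\tau(\gamma)) \cap A_1$ on the same probability space, $\mathcal{I}'_2$ has the required law of the trace of $\eta'_2$ on $A_1$. Since $\mathrm{Range}(\tau(\gamma)) \subseteq \mathrm{Range}(\gamma)$ for every $\gamma$, taking unions gives $\mathcal{I}'_2 \subseteq \mathcal{I}_2$ almost surely on this coupling space, which is (\ref{eq:coupling2'}). There is no real obstacle here: the whole content is the pathwise inequality $T_{A_2} \leq V_1$ together with the functoriality of the Poisson construction under the measurable truncation map $\tau$.
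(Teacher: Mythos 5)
Your proof is correct and is precisely the argument the paper intends (the paper omits the proof of this corollary as immediate): since the walk under $\overline{P}_{\widetilde{e}_{A_{1}}}$ starts in $A_{1}\subset A_{2}$, one has $T_{A_{2}}\leq V_{1}$ pathwise, so the short excursion is a prefix of the long one, and truncating the Poisson point process $\eta_{2}$ at $T_{A_{2}}$ produces a realisation of $\eta'_{2}$ (same intensity multiplier $(1+\epsilon/4)u_{**}\mathrm{cap}(A_{1})$) whose trace is contained in $\mathcal{I}_{2}$. No gaps.
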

The fifth coupling establishes the stochastic domination of $\mathcal{I}'_{2}$
on the trace of $\mathcal{I}^{(1+\epsilon/8)u_{**}}$ in $A_{1}$.
It is reproduced from \cite{Gumbel}.
\begin{prop}
\label{prop:coupling3} When $N$ is large, there exists a probability
space $(\Sigma_{3},\mathcal{B}_{3},Q_{3})$ endowed with random sets
$\mathcal{I}$ with the same law as \textup{$\mathcal{I}^{u_{**}(1+\frac{\epsilon}{8})}\cap A_{1}$
under $\mathbb{P}$} and $\mathcal{I}'_{2}$ with the same law as
$\mathcal{I}'_{2}$ under $P^{\mathcal{I}'_{2}}$\textup{, such that
\begin{equation}
Q_{3}[\mathcal{I}'_{2}\supseteq\mathcal{I}]\geq1-e^{-N^{c}}.\label{eq:coupling3}
\end{equation}
}
\end{prop}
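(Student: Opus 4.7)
The strategy I would take is to exploit the excursion decomposition of random interlacements restricted to the mesoscopic box $A_1$, comparing it with the Poisson point process of short excursions that generates $\mathcal{I}'_2$. By the characterizing identity (\ref{eq:RIdef}), I would realize $\mathcal{I}^{u_{**}(1+\epsilon/8)}\cap A_1$ as the trace of a Poisson point process of ``forward trajectories'': a Poisson number, with parameter $u_{**}(1+\epsilon/8)\cdot\mathrm{cap}(A_1)$, of i.i.d.\ simple random walks started from $\widetilde{e}_{A_1}$ and run to infinity. Each forward trajectory is then chopped into ``sub-excursions'' by successive entries into $A_1$ and subsequent exits from $A_2$: the first sub-excursion has exactly the law $\kappa'_2$ (starting from $\widetilde{e}_{A_1}$, stopped at $T_{A_2}$), whereas later sub-excursions correspond to returns to $A_1$ after having first exited $A_2$.

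The quantitative input I would use is the standard mesoscopic hitting estimate (Proposition~1.5.10 of \cite{Intersection}): since $A_1$ has much smaller capacity than $A_2$, a simple random walk started on $\partial A_2$ hits $A_1$ before escaping to infinity with probability at most $cN^{-c'}$ for some $c,c'>0$. Consequently the number of sub-excursions produced per forward trajectory is dominated by a geometric random variable with tiny success parameter, so that the total count of sub-excursions exceeds the number of forward trajectories only by a negligible amount. A Chernoff-type bound for Poisson random variables then yields that, with probability at least $1-e^{-N^c}$, the total number of sub-excursions does not exceed $u_{**}(1+\epsilon/4)\cdot\mathrm{cap}(A_1)$, thanks to the strict gap between $1+\epsilon/8$ and $1+\epsilon/4$.

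It remains to reconcile the starting distributions. For a sub-excursion other than the first, the hitting distribution on $\partial_i A_1$ conditional on a return from outside $A_2$ is uniformly comparable to $\widetilde{e}_{A_1}$, by classical harmonic measure estimates for simple random walk on mesoscopic annuli. Using thinning, together with augmenting by a small number of extra independent short excursions to absorb the discrepancy in starting distributions, the total collection of sub-excursions can be embedded into a Poisson point process with intensity $u_{**}(1+\epsilon/4)\cdot\mathrm{cap}(A_1)\cdot\kappa'_2$, whose trace on $A_1$ is by definition $\mathcal{I}'_2$. This produces the inclusion $\mathcal{I}'_2 \supseteq \mathcal{I}$ outside an exceptional event of probability at most $e^{-N^c}$.

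The main obstacle I anticipate is the careful bookkeeping needed to match the starting distributions of the sub-excursions with $\widetilde{e}_{A_1}$ uniformly, while simultaneously controlling the random numbers of sub-excursions per trajectory. Precisely this bookkeeping is carried out in \cite{Gumbel}; in practice I would invoke the coupling constructed there, checking only that the parameter choices, namely level $u_{**}(1+\epsilon/8)$ for the interlacements versus intensity $(1+\epsilon/4)u_{**}\mathrm{cap}(A_1)$ for the short-excursion process, leave the required $\epsilon$-margin for the coupling to succeed with the stated stretched-exponential bound.
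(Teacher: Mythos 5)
Your proposal is correct and takes essentially the same route as the paper, which gives no self-contained argument for this proposition and simply refers to Proposition 5.4 and Section 9 of \cite{Gumbel}; your closing step does exactly this. The preliminary sketch you give (Poissonization of the interlacement trajectories entering $A_{1}$, chopping into sub-excursions stopped at $T_{A_{2}}$, the $N^{-c}$ return estimate, the Chernoff bound exploiting the gap between $1+\epsilon/8$ and $1+\epsilon/4$, and the comparison of entrance distributions with $\widetilde{e}_{A_{1}}$) is a faithful outline of the mechanism implemented in \cite{Gumbel}.
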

We refer the readers to Proposition 5.4 of \cite{Gumbel} and to Section
9 of \cite{Gumbel} for its proof. 

\vspace{0.3cm}

The next proposition links together the above couplings from Propositions
\ref{prop:coupling0}, \ref{prop:coupling1}, Corollaries \ref{cor:coupling2},
\ref{cor:coupling2'}, and Proposition \ref{prop:coupling3}. We prove
that for any $x_{0}$ in the ``strip'' $\Gamma^{N}$, the probability
that it is connected in $\mathcal{V}$ (i.e., the vacant set of the
random walk, see below (\ref{eq:filtration})) to the (inner) boundary
of $A_{1}^{x_{0}}$ is small.
\begin{prop}
\label{prop:tiltedblocking}For large $N$ and all $x_{0}\in\Gamma^{N}$,
one has 
\begin{equation}
\widetilde{P}_{N}[x_{0}\overset{\mathcal{V}}{\longleftrightarrow}\partial_{i}A_{1}^{x_{0}}]\leq e^{-c\log^{2}N}.\label{eq:tiltedblocking}
\end{equation}
\end{prop}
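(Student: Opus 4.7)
The plan is to transfer the event from $\widetilde{P}_N$ to the confined walk $\overline{P}_0$ via Corollary \ref{cor:coincide}, then to dominate the trace $X_{[0,T_N)}\cap A_1$ by a random interlacement trace at a level strictly above $u_{**}$ by composing the five couplings constructed earlier in this section, and finally to invoke the stretched-exponential bound \eqref{eq:supercrit}. Observe first that any nearest-neighbour path in $\mathcal{V}$ joining $x_0$ to $\partial_i A_1^{x_0}$ can, after truncation at its first visit to $\partial_i A_1^{x_0}$, be chosen to lie inside $A_1$; in particular it avoids $X_{[0,T_N)}\cap A_1$. Hence, using Corollary \ref{cor:coincide},
\[
\widetilde{P}_N[x_0 \overset{\mathcal{V}}{\longleftrightarrow} \partial_i A_1^{x_0}] \le \overline{P}_0\bigl[x_0 \overset{A_1\setminus X_{[0,T_N)}}{\longleftrightarrow} \partial_i A_1^{x_0}\bigr].
\]
By Proposition \ref{prop:enoughexcursions}, outside a $\overline{P}_0$-event of probability $e^{-N^c}$ one has $R_J < T_N$, and so $X_{[0,T_N)}\cap A_1 \supseteq \mathcal{A}$ there.

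Next I would splice the five couplings onto a single probability space by drawing independent auxiliary randomness at each step: Proposition \ref{prop:coupling0} gives $\mathcal{A}=\check{\mathcal{A}}$ off an event of probability $e^{-c\log^2 N}$; Proposition \ref{prop:coupling1} gives $\check{\mathcal{A}}\supseteq\mathcal{I}_1$ off an event of probability $e^{-N^c}$; Corollaries \ref{cor:coupling2} and \ref{cor:coupling2'} give $\mathcal{I}_1\supseteq\mathcal{I}_2\supseteq\mathcal{I}_2'$ almost surely; and Proposition \ref{prop:coupling3} gives $\mathcal{I}_2'\supseteq\mathcal{I}$ off an event of probability $e^{-N^c}$, where $\mathcal{I}$ has the law of $\mathcal{I}^{u_{**}(1+\epsilon/8)}\cap A_1$ under $\mathbb{P}$. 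On the intersection $G$ of all these good events together with $\{R_J<T_N\}$, one has $X_{[0,T_N)}\cap A_1 \supseteq \mathcal{I}$, and the probability of $G^c$ is at most $2e^{-c\log^2 N}$ for large $N$, the error from Proposition \ref{prop:coupling0} being dominant.

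Finally, on $G$ the event $\{x_0 \overset{A_1\setminus X_{[0,T_N)}}{\longleftrightarrow} \partial_i A_1^{x_0}\}$ entails $\{x_0 \overset{\mathcal{V}^{u_{**}(1+\epsilon/8)}}{\longleftrightarrow} \partial_i A_1^{x_0}\}$; since $u_{**}(1+\epsilon/8) > u_{**}$ and $A_1 = B_\infty(x_0, \lfloor N^{r_1}\rfloor)$, \eqref{eq:supercrit} bounds its $\mathbb{P}$-probability by $c\exp(-c' N^{r_1 c'})$, which is negligible compared to $e^{-c\log^2 N}$. Adding this to $\overline{P}_0(G^c)$ yields \eqref{eq:tiltedblocking}. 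The main difficulty is not located in this statement but upstream in the construction of the couplings themselves, above all in Proposition \ref{prop:coupling0}, which rests on the quasi-stationary control of Section 4 and on the spectral mixing estimate \eqref{eq:relaxation}; once those ingredients are granted, the present proposition is a routine consolidation of the chain.
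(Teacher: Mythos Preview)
Your proposal is correct and follows essentially the same route as the paper: transfer to the confined walk via Corollary~\ref{cor:coincide}, invoke Proposition~\ref{prop:enoughexcursions}, and then run through the five couplings (Propositions~\ref{prop:coupling0}, \ref{prop:coupling1}, Corollaries~\ref{cor:coupling2}, \ref{cor:coupling2'}, Proposition~\ref{prop:coupling3}) before applying \eqref{eq:supercrit}. The only cosmetic difference is that you describe building a single grand coupling on one probability space, whereas the paper chains the probability inequalities across the separate coupling spaces $(\Omega_i,\mathcal{B}_i,Q_i)$ one at a time; the latter is slightly cleaner because it sidesteps the need to justify gluing the couplings together (your phrase ``drawing independent auxiliary randomness'' is not quite the right mechanism for that gluing, though the conclusion is unaffected).
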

\begin{proof}
First, by Corollary \ref{cor:coincide}, for all $x_{0}\in\Gamma^{N}$,
one can replace $\widetilde{P}_{N}$ (the tilted walk) with $\overline{P}_{0}$
(the confined walk) stopped at time $T_{N}$ so as to benefit from
the various results we obtained for $\overline{P}_{0}$:
\begin{equation}
\widetilde{P}_{N}[x_{0}\overset{\mathcal{V}}{\longleftrightarrow}\partial_{i}A_{1}^{x_{0}}]\leq\widetilde{P}_{N}[x_{0}\overset{(X_{[R_{2},T_{N})})^{c}}{\longleftrightarrow}\partial_{i}A_{1}^{x_{0}}]\overset{(\ref{eq:coincide})}{=}\overline{P}_{0}[x_{0}\overset{(X_{[R_{2},T_{N})})^{c}}{\longleftrightarrow}\partial_{i}A_{1}^{x_{0}}].\label{eq:transfertoconfine}
\end{equation}
By Proposition \ref{prop:enoughexcursions} and the first coupling,
namely Proposition \ref{prop:coupling0}, (see ibid. for notation),
for large $N$, the right-hand quantity in (\ref{eq:transfertoconfine})
is bounded above by the probability that there is a connection in
$\check{\mathcal{A}}^{c}$ (from $x$ to $\partial_{i}A_{1}^{x_{0}}$)
plus a small correction:
\begin{align}
\begin{array}{cc}
\begin{array}{c}
\overline{P}_{0}[x_{0}\overset{(X_{[R_{2},T_{N})})^{c}}{\longleftrightarrow}\partial_{i}A_{1}^{x_{0}}]\leq\overline{P}_{0}[x_{0}\overset{\mathcal{A}^{c}}{\longleftrightarrow}\partial_{i}A_{1}^{x_{0}},\ R_{J}<T_{N}]+\overline{P}_{0}[R_{J}\geq T_{N}]\\
\overset{(\ref{eq:enoughexcursion})}{\underset{(\ref{eq:coupling0})}{\leq}}Q_{0}[x_{0}\overset{\check{\mathcal{A}}^{c}}{\longleftrightarrow}\partial_{i}A_{1}^{x_{0}}]+e^{-c\log^{2}N}=\overline{P_{2}^{J}}[x_{0}\overset{\check{\mathcal{A}}^{c}}{\longleftrightarrow}\partial_{i}A_{1}^{x_{0}}]+e^{-c\log^{2}N}.
\end{array}\end{array}\label{eq:ccineq1}
\end{align}
Then, by the second coupling, namely Corollary \ref{prop:coupling1},
one has that the first term of the last equation in (\ref{eq:ccineq1})
is bounded above by the probability that there is a connection in
$\mathcal{I}_{1}^{c}$ plus a small correction:
\begin{align}
\begin{split}\overline{P_{2}^{J}}[x_{0}\overset{\check{\mathcal{A}}^{c}}{\longleftrightarrow}\partial_{i}A_{1}^{x_{0}}] & \ \,\leq\ Q_{1}[x_{0}\overset{\check{\mathcal{A}}^{c}}{\longleftrightarrow}\partial_{i}A_{1}^{x_{0}},\:\check{\mathcal{A}}\supseteq\mathcal{I}_{1}]+Q_{1}[\check{\mathcal{A}}\nsupseteq\mathcal{I}_{1}]\\
 & \overset{(\ref{eq:coupling1})}{\leq}Q_{1}[x_{0}\overset{\mathcal{I}_{1}^{c}}{\longleftrightarrow}\partial_{i}A_{1}^{x_{0}}]+e^{-N^{c}}=P^{\mathcal{I}_{1}}[x_{0}\overset{\mathcal{I}_{1}^{c}}{\longleftrightarrow}\partial_{i}A_{1}^{x_{0}}]+e^{-N^{c}}.
\end{split}
\label{eq:ccineq2}
\end{align}
Next, by the third coupling, namely Corollary \ref{cor:coupling2},
one has that the first term in the last equation of (\ref{eq:ccineq2})
is smaller or equal to the probability that there is a connection
in $\mathcal{I}_{2}^{c}$:
\begin{equation}
\mathcal{P}^{\mathcal{I}_{1}}[x_{0}\overset{\mathcal{I}_{1}^{c}}{\longleftrightarrow}\partial_{i}A_{1}^{x_{0}}]=Q_{2}[x_{0}\overset{\mathcal{I}_{1}^{c}}{\longleftrightarrow}\partial_{i}A_{1}^{x_{0}}]\overset{(\ref{eq:coupling2})}{\leq}Q_{2}[x_{0}\overset{\mathcal{I}_{2}^{c}}{\longleftrightarrow}\partial_{i}A_{1}^{x_{0}}]=\mathcal{P}{}^{\mathcal{I}_{2}}[x_{0}\overset{\mathcal{I}_{2}^{c}}{\longleftrightarrow}\partial_{i}A_{1}^{x_{0}}].\label{eq:ccineq3}
\end{equation}
Next, by the fourth coupling, namely Corollary \ref{cor:coupling2'},
one has that the first term in the last equation of (\ref{eq:ccineq3})
is smaller or equal to the probability that there is a connection
in $\mathcal{I}_{2}^{'c}$:
\begin{equation}
\mathcal{P}^{\mathcal{I}_{2}}[x_{0}\overset{\mathcal{I}_{2}^{c}}{\longleftrightarrow}\partial_{i}A_{1}^{x_{0}}]=Q'_{2}[x_{0}\overset{\mathcal{I}_{2}^{c}}{\longleftrightarrow}\partial_{i}A_{1}^{x_{0}}]\overset{(\ref{eq:coupling2'})}{\leq}Q'_{2}[x_{0}\overset{\mathcal{I}_{2}^{'c}}{\longleftrightarrow}\partial_{i}A_{1}^{x_{0}}]=\mathcal{P}{}^{\mathcal{I}'_{2}}[x_{0}\overset{\mathcal{I}_{2}^{'c}}{\longleftrightarrow}\partial_{i}A_{1}^{x_{0}}].\label{eq:ccineq4}
\end{equation}
Finally, by the fifth coupling, namely Proposition \ref{prop:coupling3},
and the strong super-criticality of random interlacements (see (\ref{eq:supercrit})),
one has
\begin{align}
\begin{split}\mathcal{P}^{\mathcal{I}'_{2}}[x_{0}\overset{\mathcal{I}_{2}^{'c}}{\longleftrightarrow}\partial_{i}A_{1}^{x_{0}}]=Q_{3}[x_{0}\overset{\mathcal{I}_{2}^{'c}}{\longleftrightarrow}\partial_{i}A_{1}^{x_{0}}]\leq Q_{3}[\mathcal{I}'_{2}\supseteq\mathcal{I},\, x_{0}\overset{\mathcal{I}^{c}}{\longleftrightarrow}\partial_{i}A_{1}^{x_{0}}]+Q_{3}[\mathcal{I}'_{2}\nsupseteq\mathcal{I}]\\
\overset{(\ref{eq:coupling3})}{\leq}Q_{3}[x_{0}\overset{\mathcal{I}^{c}}{\longleftrightarrow}\partial_{i}A_{1}^{x_{0}}]+e^{-N^{c}}=\mathbb{P}[x_{0}\overset{\mathcal{V}^{u_{**}(1+\epsilon/8)}}{\longleftrightarrow}\partial_{i}A_{1}^{x_{0}}]+e^{-N^{c}}\overset{(\ref{eq:supercrit})}{\leq}e^{-N^{c'}}.
\end{split}
\label{eq:ccineq5}
\end{align}

The claim (\ref{eq:tiltedblocking}) hence follows by collecting (\ref{eq:ccineq1})-(\ref{eq:ccineq5}).
\end{proof}
We are ready now to state and prove the main result of this section,
namely that the tilted disconnection probability tends to $1$ as
$N$ tends to infinity.
\begin{thm}
\label{thm:rwdisconnection}
\begin{equation}
\lim_{N\to\infty}\widetilde{P}_{N}[K_{N}\overset{\mathcal{V}}{\nleftrightarrow}\infty]=1.
\end{equation}
\end{thm}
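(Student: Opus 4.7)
The plan is to reduce Theorem \ref{thm:rwdisconnection} to Proposition \ref{prop:tiltedblocking} via a simple geometric observation and a union bound. The ``fence'' $\Gamma^{N}=\partial K_{N}^{\delta/2}$ of centers for the mesoscopic boxes was designed precisely for this purpose.

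First, I would establish the deterministic inclusion
\begin{equation*}
\{K_{N}\overset{\mathcal{V}}{\longleftrightarrow}\infty\}\ \subseteq\ \bigcup_{x_{0}\in\Gamma^{N}}\{x_{0}\overset{\mathcal{V}}{\longleftrightarrow}\partial_{i}A_{1}^{x_{0}}\}.
\end{equation*}
Indeed, on the left-hand event there is a nearest-neighbour path $v_{0},v_{1},\ldots$ in $\mathcal{V}$ with $v_{0}\in K_{N}$ and $|v_{n}|\to\infty$. Such a path must leave $K_{N}^{\delta/2}$, so it contains a first vertex $y$ outside this set, which by definition of $\Gamma^{N}=\partial K_{N}^{\delta/2}$ belongs to $\Gamma^{N}$ and automatically to $\mathcal{V}$. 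The tail of the path from $y$ still escapes to infinity, hence must exit the mesoscopic box $A_{1}^{y}$; the last vertex of the tail inside $A_{1}^{y}$ lies in $\partial_{i}A_{1}^{y}$ and is joined to $y$ by a sub-path in $\mathcal{V}$, so the event on the right-hand side holds with $x_{0}=y$.

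Next, I would apply the uniform estimate of Proposition \ref{prop:tiltedblocking} together with a crude union bound over $\Gamma^{N}$:
\begin{equation*}
\widetilde{P}_{N}[K_{N}\overset{\mathcal{V}}{\longleftrightarrow}\infty]\ \leq\ \sum_{x_{0}\in\Gamma^{N}}\widetilde{P}_{N}[x_{0}\overset{\mathcal{V}}{\longleftrightarrow}\partial_{i}A_{1}^{x_{0}}]\ \leq\ |\Gamma^{N}|\,e^{-c\log^{2}N}.
\end{equation*}
Since $\Gamma^{N}$ is contained in the discrete blow-up of the compact set $K^{\delta/2}$, its cardinality is polynomially bounded in $N$, so the right-hand side is $o(1)$ as $N\to\infty$. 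Passing to complements then yields $\widetilde{P}_{N}[K_{N}\overset{\mathcal{V}}{\nleftrightarrow}\infty]\to 1$, as required.

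There is no real obstacle at this stage: all the heavy lifting is done by the chain of couplings of Section 5 and Proposition \ref{prop:tiltedblocking}, and the stretched-logarithmic decay easily beats the polynomial volume of $\Gamma^{N}$. The only minor points to verify are that the constant $c$ in Proposition \ref{prop:tiltedblocking} is uniform in $x_{0}\in\Gamma^{N}$ (which it is, since the constants depend only on $\delta,\eta,\epsilon,R$ and the $r_{i}$'s) and that $A_{1}^{y}\subset U^{N}$ for every $y\in\Gamma^{N}$ when $N$ is large, which is guaranteed by (\ref{3.4}).
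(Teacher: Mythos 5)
Your proposal is correct and follows essentially the same route as the paper: the deterministic inclusion of $\{K_{N}\overset{\mathcal{V}}{\longleftrightarrow}\infty\}$ in the union over $x_{0}\in\Gamma^{N}$ of the local connection events, followed by a union bound combined with the uniform estimate of Proposition \ref{prop:tiltedblocking} and the polynomial bound on $|\Gamma^{N}|$. The only difference is that you spell out the geometric inclusion in slightly more detail than the paper does.
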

\begin{proof}
Note that for large $N$, if a nearest-neighbour path connects $K_{N}$
and infinity, it must go through the set $\Gamma^{N}$ at some point
$x_{0}$ (see above (\ref{eq:rchoice}) for the definition of $\Gamma^{N}$).
Hence, it connects $x_{0}$ to the inner boundary of $A_{1}^{x_{0}}$,
so that
\begin{equation}
\{K_{N}\overset{\mathcal{V}}{\nleftrightarrow}\infty\}^{c}\subset\cup_{x_{0}\in\Gamma^{N}}\{x_{0}\overset{\mathcal{V}}{\longleftrightarrow}\partial_{i}A_{1}^{x_{0}}\}.
\end{equation}
Thus, we see that for large $N$,
\begin{equation}
\widetilde{P}_{N}[\{K_{N}\overset{\mathcal{V}}{\nleftrightarrow}\infty\}^{c}]\leq\sum_{x_{0}\in\Gamma^{N}}\widetilde{P}_{N}[x_{0}\overset{\mathcal{V}}{\longleftrightarrow}\partial_{i}A_{1}^{x_{0}}].\label{eq:probdecomp}
\end{equation}
By Proposition \ref{prop:tiltedblocking}, we find that for large
$N$, uniformly for each $x_{0}\in\Gamma^{N}$, we can bound each
term on right-hand side of (\ref{eq:probdecomp}), and find
\begin{equation}
\widetilde{P}_{N}[\{K_{N}\overset{\mathcal{V}}{\nleftrightarrow}\infty\}^{c}]\leq|\Gamma^{N}|e^{-c\log^{2}N}\underset{_{N\to\infty}}{\longrightarrow}0.
\end{equation}
This completes the proof of Theorem \ref{thm:rwdisconnection}.
\end{proof}

\section{Denouement and Epilogue}

In this section we combine the main ingredients, namely Theorem \ref{thm:rwdisconnection}
and Proposition \ref{prop:limsup} and prove Theorem \ref{thm:mainTheorem}. 
\begin{proof}[Proof of Theorem \ref{thm:mainTheorem}]
We recall the entropy inequality (see (\ref{eq:Entropychange})),
and apply it to $P_{0}$ and $\widetilde{P}_{N}$ (which is defined
in Section 2). By Theorem \ref{thm:rwdisconnection}, one has 
\begin{equation}
\lim_{N\to\infty}\widetilde{P}_{N}[K_{N}\overset{\mathcal{V}}{\nleftrightarrow}\infty]=1,
\end{equation}
thus the relative entropy inequality (\ref{eq:Entropychange}) yields
that
\begin{equation}
\liminf_{N\to\infty}\frac{1}{N^{d-2}}\log(P_{0}[K_{N}\overset{\mathcal{V}}{\nleftrightarrow}\infty])\geq-\limsup_{N\to\infty}\frac{1}{N^{d-2}}H(\widetilde{P}_{N}|P_{0}).\label{eq:entropychangeappli}
\end{equation}
Then, as in the proof of Proposition \ref{prop:limsup}, taking consecutively
the limsup as $\eta\to0$, $R\to\infty$, $\delta\to0$ and $\epsilon\to0$,
one has
\begin{equation}
\limsup_{\epsilon\to0}\limsup_{\delta\to0}\limsup_{R\to\infty}\limsup_{\eta\to0}\limsup_{N\to\infty}\frac{1}{N^{d-2}}H(\widetilde{P}_{N}|P_{0})\leq\frac{u_{**}}{d}\mathrm{cap}_{\mathbb{R}^{d}}(K),
\end{equation}
proving Theorem \ref{thm:mainTheorem}.\end{proof}
\begin{rem}
\label{finalremark}1) The proof of Theorem \ref{thm:mainTheorem}
not only confirms the conjecture proposed in Remark 5.1 2) of \cite{Li-Szn lb},
but also shows, after minor changes, that for any $M>1$,
\begin{equation}
\liminf_{N\to\infty}\frac{1}{N^{d-2}}\log(P_{0}[B_{N}\overset{\mathcal{V}}{\nleftrightarrow}S_{N}])\geq-\frac{u_{**}}{d}\mathrm{cap}_{\mathbb{R}^{d}}([-1,1]^{d}),
\end{equation}
where (in the notation of \cite{dscn2-Szn}) $B_{N}=\{x\in\mathbb{Z}^{d};\,|x|_{\infty}\leq N\}$
and $S_{N}=\{x\in\mathbb{Z}^{d};\,|x|_{\infty}=[MN]\}$. This asymptotic
lower bound possibly matches the asymptotic upper bound (\ref{eq:mainubnew})
recently obtained in \cite{dscn2-Szn}.

2) Assume for simplicity that the compact $K$ is regular. Notice
that unlike what happens for $d\geq5$, when $d=3,4$, the function
$h$ defined in (\ref{eq:Dirichletproblem}) is not in $L^{2}(\mathbb{R}^{d})$,
and $h_{N}(x)=h(\frac{x}{N})$ is not in $l^{2}(\mathbb{Z}^{d})$.
This fact affects $T_{N}$ defined in (\ref{eq:TNdef}) (which diverges
if $R\to\infty$ when $d=3,4$, but not when $d\ge5$). One can wonder
whether this feature reflects different qualitative behaviours of
the random walk path under the conditional measure $P_{0}[\cdot|K_{N}\overset{{\cal V}}{\nleftrightarrow}\infty]$
when $N$ becomes large?
\end{rem}
\appendix

\section{Appendix}

{\small{In the appendix we include the proof of Propositions \ref{prop:qsdem}
and \ref{prop:enoughexcursions}.}}{\small \par}
\begin{proof}[Proof of Proposition \ref{prop:qsdem}]
{\small{We first prove that for $x\in\partial_{i}A_{1}$ 
\begin{equation}
\bigg|\overline{P}_{x}[V<\widetilde{H}_{A_{1}}]-\overline{P}_{\sigma}[X_{H_{A_{1}}}=x]\sum_{y\in\partial_{i}A_{1}}\overline{P}_{y}[V<\widetilde{H}_{A_{1}}]\bigg|\leq e^{-c\log^{2}N},\label{quni2}
\end{equation}
and, as we will see, the claim (\ref{eq:quasi}) will then follow.
We consider in the left-hand side of (\ref{quni1}) the probability
that the random walk started from $x\in\partial_{i}A_{1}$ stays in
$D$ for a time interval of length $\overline{t_{*}}$ before returning
to $A_{1}$, and then returns to $A_{1}$ through some vertex other
than $x$. By reversibility of the confined walk, and the fact that
by claim 3. of (\ref{eq:fproperty}) and claim 1. of (\ref{eq:cfwalkproperty}),
$\pi(y)=\pi(x)$ for all $y\in\partial_{i}A_{1}$, this probability
can be written as 
\begin{equation}
\sum_{y\in\partial_{i}A_{1}\setminus\{x\}}\overline{P}_{x}[V<\widetilde{H}_{A_{1}},\, X_{H_{A_{1}}}=y]=\sum_{y\in\partial_{i}A_{1}\setminus\{x\}}\overline{P}_{y}[V<\widetilde{H}_{A_{1}},\, X_{H_{A_{1}}}=x].\label{quni1}
\end{equation}
We introduce $L$, the index of last ``step'' of the path in $A_{2}$
before time $V$ (see (\ref{eq:taudef}) and the paragraph above (\ref{eq:skeleton chain})
for the definition of $\tau_{l}$ and $Z_{l}$ respectively):
\begin{equation}
L=\sup\{l:\:\tau_{l}\leq V,\, Z_{l}\in A_{2}\}.\label{eq:ldef}
\end{equation}
We consider the summands from (\ref{quni1}): for all $x,y\in\partial_{i}A_{1}$,
we sum over all possible values of $L$ and $X_{\tau_{L}}=Z_{L}$
(recall the definition of $\tau_{l}$ in (\ref{eq:taudef}) and the
relation between $X_{\tau_{l}}$ and $Z_{l}$ in (\ref{eq:skeleton chain})),
and apply Markov property at the times $\tau_{l+1}$ and $\tau_{l+1}+\overline{t_{*}}$:
\begin{eqnarray}
 &  & \overline{P}_{x}[V<\widetilde{H}_{A_{1}},X_{\widetilde{H}_{A_{1}}}=y]=\sum_{l\geq0,x'\in\partial_{i}A_{2}}\overline{P}_{x}[L=l,\, Z_{l}=x',\, V<\widetilde{H}_{A_{1}},X_{\widetilde{H}_{A_{1}}}=y]\nonumber \\
 & = & \sum_{l\geq0,x'\in\partial_{i}A_{2}}\overline{P}_{x}[Z_{l}=x',\,\tau_{l}<\widetilde{H}_{A_{1}}\wedge V,\, H_{A_{2}}\circ\theta_{\tau_{l+1}}>\overline{t_{*}},\, X_{\widetilde{H}_{A_{1}}}=y]\label{eq:qsdcp1}\\
 & = & \sum_{\overset{l\geq0,x''\in D}{x'\in\partial_{i}A_{2}}}\overline{E}_{x}\Big[Z_{l}=x',\,\tau_{l}<\widetilde{H}_{A_{1}}\wedge V,\,\overline{P}_{Z_{l+1}}[H_{A_{2}}>\overline{t_{*}}]\overline{P}_{Z_{l+1}}[X_{\overline{t_{*}}}=x''|H_{A_{2}}>\overline{t_{*}}]\Big]\overline{P}_{x''}[X_{H_{A_{1}}}=y],\nonumber 
\end{eqnarray}
(we will soon use the fact that the conditioned probability in the
last expression is close to $\sigma(x'')$ by Proposition \ref{prop:qsdcp}).
Similarly we have
\begin{equation}
\overline{P}_{x}[V<\widetilde{H}_{A_{1}}]=\sum_{l\geq0,x'\in\partial_{i}A_{2}}\overline{E}_{x}\Big[Z_{l}=x',\,\tau_{l}<\widetilde{H}_{A_{1}}\wedge V,\,\overline{P}_{Z_{l+1}}[H_{A_{2}}>\overline{t_{*}}]\Big].
\end{equation}
This implies that
\begin{equation}
\overline{P}_{x}[V<\widetilde{H}_{A_{1}}]\overline{P}_{\sigma}[X_{H_{A_{1}}}=y]=\sum_{\overset{l\geq0,x''\in D}{x'\in\partial_{i}A_{2}}}\overline{E}_{x}\Big[Z_{l}=x',\,\tau_{l}<\widetilde{H}_{A_{1}}\wedge V,\,\overline{P}_{Z_{l+1}}[H_{A_{2}}>\overline{t_{*}}]\Big]\sigma(x'')\overline{P}_{x''}[X_{H_{A_{1}}}=y].\label{eq:qsdcp2}
\end{equation}
Hence, by combining (\ref{eq:qsdcp1}) and (\ref{eq:qsdcp2}) we have
\begin{equation}
\Big|\overline{P}_{x}[V<\widetilde{H}_{A_{1}},X_{\widetilde{H}_{A_{1}}}=y]-\overline{P}_{x}[V<\widetilde{H}_{A_{1}}]\overline{P}_{\sigma}[X_{H_{A_{1}}}=y]\Big|\overset{(\ref{eq:qsdcp})}{\leq}e^{-c\log^{2}N}.
\end{equation}
Applying this estimate in both sides in (\ref{quni1}), we obtain
that 
\begin{equation}
\Big|\overline{P}_{x}[V<\widetilde{H}_{A_{1}}]\overline{P}_{\sigma}[X_{H_{A_{1}}}\neq x]-\sum_{y\in\partial_{i}A_{1}\setminus\{x\}}\overline{P_{y}}[V<\widetilde{H}_{A_{1}}]\overline{P}_{\sigma}[X_{H_{A_{1}}}=x]\Big|\leq e^{-c\log^{2}N}.
\end{equation}
Finally, by adding and subtracting $\overline{P}_{x}[V<\widetilde{H}_{A_{1}}]\overline{P}_{\sigma}[X_{H_{A_{1}}}=x]$,
we obtain (\ref{quni2}) as desired.}}{\small \par}

{\small{Now we prove (\ref{eq:quasi}). By (\ref{eq:emboxlb}) and
(\ref{eq:capest2}) one has that 
\begin{equation}
\sum_{y\in\partial_{i}A_{1}}\overline{P}_{y}[V<\widetilde{H}_{A_{1}}]\widetilde{e}_{A_{1}}(x)\geq N^{-c'}.\label{eq:quni5}
\end{equation}
Hence dividing (\ref{quni2}) by the left-hand term of (\ref{eq:quni5}),
one obtains 
\begin{equation}
\bigg|\frac{\overline{P}_{x}[V<\widetilde{H}_{A_{1}}]}{\sum_{y\in\partial_{i}A_{1}}\overline{P}_{y}[V<\widetilde{H}_{A_{1}}]\widetilde{e}_{A_{1}}(x)}-\frac{\overline{P}_{\sigma}[X_{H_{A_{1}}}=x]}{\widetilde{e}_{A_{1}}(x)}\bigg|\leq e^{-c'\log^{2}N},
\end{equation}
and together with (\ref{quni4}) the proof of (\ref{eq:quasi}) is
complete.}}{\small \par}
\end{proof}

\begin{proof}[Proof of Proposition \ref{prop:enoughexcursions}]
{\small{In this proof we always assume that $N$ is sufficiently
large. We recall the definition of $T_{N}$ in (\ref{eq:TNdef}) and
the choice of $\epsilon$ in (\ref{eq:deltadef}). In order to prove
(\ref{eq:enoughexcursion}), we observe that, $\overline{P}_{0}$-a.s.,
\begin{equation}
\begin{split}\{R_{J}\geq T_{N}\}\subseteq & \left\{ H_{A_{1}}+H_{A_{1}}\circ\theta_{V_{1}}+\cdots+H_{A_{1}}\circ\theta_{V_{J-1}}\geq(1-\frac{\epsilon}{100})T_{N}\right\} \\
 & \cup\left\{ V\circ\theta_{R_{1}}+\cdots+V\circ\theta_{R_{J-1}}\geq\frac{\epsilon}{100}T_{N}\right\} ,
\end{split}
\label{eld3}
\end{equation}
that is, the (unlikely) event $\{R_{J}\geq T_{N}\}$ happens only
when either the sum of $H_{A_{1}}$'s exceeds a quantity close to
$T_{N}$ or the sum of shifted $V$'s exceeds a small quantity (but
still of order $T_{N}$). Now we give an upper bound to their respective
probabilities. We define 
\begin{equation}
t_{N}=\sup_{y\in U^{N}}\overline{E}_{y}[H_{A_{1}}],\label{eq:tNdef}
\end{equation}
which is the maximum of the expected entrance time in $A_{1}$ starting
from an arbitrary point in $U^{N}$ (it is not much bigger than $\overline{E}_{\pi}[H_{A_{1}}]$
by (\ref{eq:tnsup})). By the exponential Chebychev inequality and
the strong Markov property applied inductively at $V_{1},\cdots,V_{J-1}$
and $R_{1},\cdots,R_{J-1}$, we deduce from (\ref{eld3}) that, for
any $\theta>0$, 
\begin{equation}
\overline{P}_{0}[R_{J}\geq T_{N}]\leq\exp\bigg(-\theta(1-\frac{\epsilon}{100})\frac{T_{N}}{t_{N}}\bigg)\bigg(\sup_{x\in U^{N}}\overline{E}_{x}\Big[\exp(\theta\frac{H_{A_{1}}}{t_{N}})\Big]\bigg)^{J}+\exp(-\frac{\epsilon}{100}\frac{T_{N}}{t_{N}})\bigg(\sup_{x\in A_{1}}\overline{E}_{x}\Big[e^{\frac{V}{t_{N}}}\Big]\bigg)^{J}.\label{poisson0}
\end{equation}
We now treat the first term on the right-hand side of (\ref{poisson0}).
Khasminskii's Lemma (see (4) and (6) in \cite{Khasminskii}) states
that for all $B$ subset of $U^{N}$ and $n\geq1$, 
\begin{equation}
\sup_{x\in U^{N}}\overline{E}_{x}[H_{B}^{n}]\leq n!\sup_{y\in U^{N}}\overline{E}_{y}[H_{B}]^{n}.\label{eq:Khasminskii}
\end{equation}
Hence we have 
\begin{equation}
\sup_{x\in U^{N}}\overline{E}_{x}\left[\exp(\theta\frac{H_{A_{1}}}{t_{N}})\right]\leq\sum_{j=0}^{\infty}\frac{\theta^{j}}{j\text{!}t_{N}^{j}}\sup_{x\in U^{N}}\overline{E}_{x}[H_{A_{1}}^{j}]\overset{(\ref{eq:Khasminskii})}{\underset{(\ref{eq:tNdef})}{\leq}}\sum_{j=0}^{\infty}\theta^{j}=\frac{1}{1-\theta}\mbox{ for }\theta\in(0,\frac{1}{2}).\label{poisson1}
\end{equation}
Now, we derive an upper bound for $\sup_{x\in A_{2}}\overline{E}_{x}[\exp(\frac{V}{t_{N}})]$
and treat the second term on the right-hand side of (\ref{poisson0}).
We first note that, $\overline{P}_{x}$-a.s.~for any $x\in A_{2}$,
\begin{equation}
\begin{array}{cc}
\begin{array}{ccl}
V & \leq & (T_{A_{3}}+\overline{t_{*}})1_{\{H_{A_{2}}\circ\theta_{T_{A_{3}}}>\overline{t_{*}}\}}+\big(T_{A_{3}}+\overline{t_{*}}+V\circ\theta_{H_{A_{2}}}\circ\theta_{T_{A_{3}}}\big)1_{\{H_{A_{2}}\circ\theta_{T_{A_{3}}}\leq\overline{t_{*}}\}}\\
 & = & T_{A_{3}}+\overline{t_{*}}+V\circ\theta_{H_{A_{2}}}\circ\theta_{T_{A_{3}}}1_{\{H_{A_{2}}\circ\theta_{T_{A_{3}}}\leq\overline{t_{*}}\}}.
\end{array}\end{array}\label{eq:Vub}
\end{equation}
By the strong Markov property applied at $H_{A_{2}}\circ\theta_{T_{A_{3}}}+T_{A_{3}}$
and $T_{A_{3}}$, we have
\begin{align}
\begin{split}\sup_{x\in A_{2}}\overline{E}_{x}[e^{\frac{V}{t_{N}}}] & \,\text{\ensuremath{\overset{(\ref{eq:Vub})}{\leq}}}\sup_{x\in A_{2}}\overline{E}_{x}[e^{\frac{T_{A_{3}}+\overline{t_{*}}}{t_{N}}}]\bigg(1+\sup_{y\in U^{N}\backslash A_{3}}\overline{P}_{y}[H_{A_{2}}\leq\overline{t_{*}}]\sup_{x\in A_{2}}\overline{E}_{x}[e^{\frac{V}{t_{N}}}]\bigg)\\
 & \overset{(\ref{eq:hittingtimebig1})}{\leq}\sup_{x\in A_{2}}\overline{E}_{x}[e^{\frac{T_{A_{3}}+\overline{t_{*}}}{t_{N}}}]\bigg(1+N^{-c}\sup_{x\in A_{2}}\overline{E}_{x}[e^{\frac{V}{t_{N}}}]\bigg).
\end{split}
\label{eq:expub}
\end{align}
By Proposition \ref{prop:entranceinvup} we have
\begin{equation}
\frac{1}{t_{N}}\overset{(\ref{eq:tNdef})}{\leq}\frac{1}{\overline{E}_{\pi}[H_{A_{1}}]}\overset{(\ref{eq:entranceinvub})}{\leq}(1+N^{-c})\frac{\mathrm{cap}(A_{1})}{T_{N}}u_{**}(1+\epsilon)\overset{(\ref{eq:hNproperty})\,5.}{\underset{(\ref{eq:capacityorder})}{\leq}}cN^{-d+r_{1}(d-2)}.\label{eq:tNinvub}
\end{equation}
By an elementary estimate on simple random walk and the observation
that the diameter of $A_{3}$ is smaller than $cN^{r_{3}}$, we have
\begin{equation}
\overline{E}_{x}[T_{A_{3}}]\overset{(\ref{eq:coincidestopped})}{=}E_{x}[T_{A_{3}}]\leq cN^{2r_{3}}\quad\mbox{for all }x\in A_{3},\label{eq:TA3estimate}
\end{equation}
therefore we obtain that 
\begin{equation}
\frac{\sup_{x\in A_{3}}\overline{E}_{x}[T_{A_{3}}]}{t_{N}}\leq cN^{-d+2r_{3}+(d-2)r_{1}}\leq N^{-c'}.\label{poisson3}
\end{equation}
By an argument like (\ref{poisson1}), again with the help of Khasminskii's
Lemma (see (\ref{eq:Khasminskii})), we obtain that 
\begin{equation}
\sup_{x\in A_{2}}\overline{E}_{x}\big[\exp(\frac{T_{A_{3}}}{t_{N}})\big]\leq\frac{1}{1-N^{-c}}\leq e^{N^{-c'}}\mbox{ for large }N.\label{eq:TA3ub}
\end{equation}
Moreover, we obtain from (\ref{eq:tNinvub}) that 
\begin{equation}
\frac{\overline{t_{*}}}{t_{N}}\overset{(\ref{eq:regen})}{\leq}cN^{-c'}.\label{eq:tstnub}
\end{equation}
We apply (\ref{eq:TA3ub}) and (\ref{eq:tstnub}) to the right-hand
side of (\ref{eq:expub}), and conclude after rearrangement (and with
an implicit truncation argument where $V$ in (\ref{eq:Vub}) and
(\ref{eq:expub}) is replaced by $V\wedge M$) that 
\begin{equation}
\sup_{x\in A_{2}}\overline{E}_{x}[e^{V/t_{N}}]\leq e^{N^{-c}}.\label{poisson4}
\end{equation}
}}{\small \par}

{\small{We now return to (\ref{poisson0}). Substituting (\ref{poisson1})
and (\ref{poisson4}) into (\ref{poisson0}) and using the fact that
for $0\leq\theta\leq\frac{1}{2}$, 
\begin{equation}
(1-\theta)^{-1}\leq1+\theta+2\theta^{2}\leq e^{\theta+2\theta^{2}},\label{eq:thetacalc}
\end{equation}
we deduce that
\begin{eqnarray}
\overline{P}_{0}[R_{J}\geq T_{N}] & \leq & \exp\Big(-\theta\Big(1-\frac{\epsilon}{100}\Big)\frac{T_{N}}{t_{N}}+(\theta+2\theta^{2})J\Big)+\exp\Big(-\frac{\epsilon}{100}\frac{T_{N}}{t_{N}}+N^{-c}J\Big)\nonumber \\
 & \stackrel{(\ref{eq:defofk})}{\underset{(\ref{eq:thetacalc})}{\leq}} & \exp\Big(-\theta\Big(1-\frac{\epsilon}{100}\Big)\frac{T_{N}}{t_{N}}+(\theta+2\theta^{2})\lfloor(1+\epsilon/2)u_{**}\mathrm{cap}(A_{1})\rfloor\Big)\label{poisson5}\\
 &  & \qquad+\exp\Big(-\frac{\epsilon}{100}\frac{T_{N}}{t_{N}}+N^{-c}\lfloor(1+\epsilon/2)u_{**}\mathrm{cap}(A_{1})\rfloor\Big).\nonumber 
\end{eqnarray}
Recall the definition of $f_{A_{1}}$ in (\ref{eq:fA1def}). Using
Lemma \ref{lem:fA1lb}, we know that
\begin{equation}
\frac{\overline{E}_{x}[H_{A_{1}}]}{\overline{E}_{\pi}[H_{A_{1}}]}=1-f_{A_{1}}(x)\overset{(\ref{eq:fA1lb})}{\leq}1+N^{-c}\leq(1-\frac{\epsilon}{100})^{-1}\mbox{ for all }x\in U^{N}.
\end{equation}
Hence, by Proposition \ref{prop:entranceinvlb} we obtain that 
\begin{equation}
\frac{T_{N}}{t_{N}}\geq(1-\frac{\epsilon}{100})\frac{T_{N}}{\overline{E}_{\pi}[H_{A_{1}}]}\overset{(\ref{eq:entranceinvlb})}{\geq}(1-\frac{\epsilon}{50})(1+\epsilon)u_{**}\mathrm{cap}(A_{1}).\label{eq:TNtNlb}
\end{equation}
By choosing an appropriately small $\theta$ and applying (\ref{eq:TNtNlb})
we know that
\begin{equation}
-\theta\Big(1-\frac{\epsilon}{100}\Big)\frac{T_{N}}{t_{N}}+(\theta+2\theta^{2})\lfloor(1+\epsilon/2)u_{**}\mathrm{cap}(A_{1})\rfloor\leq-N^{c}\mbox{ for large }N,\label{eq:poisson6}
\end{equation}
moreover, we also know that 
\begin{equation}
-\frac{\epsilon}{100}\frac{T_{N}}{t_{N}}+N^{-c}\lfloor(1+\epsilon/2)u_{**}\mathrm{cap}(A_{1})\rfloor\leq-N^{c'}\mbox{ for large }N.\label{eq:poisson7}
\end{equation}
Inserting (\ref{eq:poisson6}) and (\ref{eq:poisson7}) into (\ref{poisson5}),
we obtain (\ref{eq:enoughexcursion}) as desired.}}{\small \par}\end{proof}


\begin{thebibliography}{10}
\bibitem{AB}D. J. Aldous and M. Brown. Inequalities for rare events
in time-reversible Markov chains I. S\textit{tochastic inequalities}.
IMS Lecture Notes-Monograph Series 22. Inst. Math. Statist., Hayward,
1992.

\bibitem{AF}D. J. Aldous and J. A. Fill. \textit{Reversible Markov
chains and random walks on graphs}. Unfinished monograph. Available
at: http://www.stat.berkeley.edu/users/aldous/RWG/book.pdf

\bibitem{Barlow}M. T. Barlow. \textit{Diffusions on Fractals}. Lecture
notes in Math. 1690. Springer, Berlin, 1998.

\bibitem{Gumbel}D. Belius. Gumbel fluctuations for cover times in
the discrete torus. \textit{Probab. Theory Relat. Fields}, 157(3-4):635-689,
2013.

\bibitem{CerTeilecture}J. \v{C}erný, A. Teixeira, \textit{From random
walk trajectories to random interlacements}. Ensaios Matemáticos 23,
2012. 

\bibitem{CerTeiWin}J. \v{C}erný, A.Teixeira, D.Windisch, Giant vacant
component left by a random walk in a random d-regular graph. \textit{Ann.
Inst. H. Poincaré Probab. Statist.} 47(4):929-968, 2011.

\bibitem{DemCast}M. Demuth and J.A. van Casteren. \textit{Stochastic
spectral theory for selfadjoint Feller operators}. Birkhäuser, Basel,
2000. 

\bibitem{LD}J. D. Deuschel and D. W. Stroock. \textit{Large deviations}.
Academic Press, Boston, 1989. 

\bibitem{Mousquetaires}A. Drewitz, B. Ráth, A. Sapozhnikov. \textit{An
Introduction to Random Interlacements. }SpringerBriefs in Mathematics,
Berlin, 2014.

\bibitem{ubarsource}A. Drewitz, B. Ráth, A. Sapozhnikov. Local percolative
properties of the vacant set of random interlacements with small intensity.
\textit{Ann. Inst. Henri Poincaré Probab. Stat.}, 50(4):1165-1197,
2014. 

\bibitem{BMMA}R. Durrett. \textit{Brownian motion and martingales
in analysis}. Wadsworth, Belmont, 1984.

\bibitem{EthierKurtz}S. M. Ethier and T. G. Kurtz. \textit{Markov processes}.
John Wiley \& Sons, New York, 1986.

\bibitem{Fukushima}M. Fukushima, Y. Oshima, and M. Takeda. \textit{Dirichlet
forms and symmetric Markov processes.} 2nd revised and extended ed.,
Walter de Gruyter, Berlin, 2011. 

\bibitem{GilTru}D. Gilbarg, N. S. Trudinger. \textit{Elliptic Partial
Differential Equations of Second Order}. 2nd ed., revised 3rd printing,
Springer, Berlin, 1998.

\bibitem{Interlacing}W. H. Haemers.\textit{ }Interlacing eigenvalues
and graphs. \textit{Linear Algebra Appl.} 226/228:593-616, 1995.

\bibitem{Khasminskii}R. Z. Has'minski\u{\i}. On positive solutions
of the equation \textit{U+Vu=0}. \textit{Theor. Probability Appl.}
4:309-318, 1959.

\bibitem{Keilson}J. Keilson. \textit{Markov chain models-rarity and
exponentiality}. Applied Mathematical Sciences 28. Springer, New York,
1979.

\bibitem{Intersection}G. F. Lawler. \textit{Intersections of random
walks}. Birkhäuser, Basel, 1991.

\bibitem{LawlerLimic}G. F. Lawler and V. Limic. \textit{Random walk:
A modern introduction}. Cambridge University Press, Cambridge, 2010.

\bibitem{MCMT}D. A. Levin, Y. Peres, and E. L. Wilmer. \textit{Markov
chains and mixing times}. American Mathematical Society, Providence,
2009.

\bibitem{Li-Szn ld}X. Li and A.-S. Sznitman. Large deviations for
occupation time profiles of random interlacements. To appear in \textit{Probab.
Theory Relat. Fields}, also available at arXiv:1304.7477.

\bibitem{Li-Szn lb}X. Li and A.-S. Sznitman. A lower bound for disconnection
by random interlacements. \textit{Electron. J. Probab.}, 19(17):1-26,
2014.

\bibitem{softlocaltime}S. Popov and A. Teixeira. Soft local times
and decoupling of random interlacements. To appear in \textit{J. Eur.
Math. Soc.}, also available at arXiv:1212.1605. 

\bibitem{Portstone}S. Port and C. Stone. \textit{Brownian motion
and classical Potential Theory}. Academic Press, New York, 1978.

\bibitem{Rudin}W. Rudin. \textit{Principles of Mathematical Analysis}.
3rd ed., McGraw-Hill, New York, 1976.

\bibitem{Saloff-Coste}L. Saloff-Coste. \textit{Lecture notes on finite
Markov chains}. Lecture Notes in Math. 1665:301-413, Springer, Berlin,
1997.

\bibitem{sido10}V. Sidoravicius and A.S. Sznitman. Connectivity bounds
for the vacant set of random interlacements. \textit{Ann. Inst. Henri
Poincaré Probab. Stat.}, 46(4):976-990, 2010. 

\bibitem{vacant}A.-S. Sznitman. Vacant set of random interlacements
and percolation. \textit{Ann. Math.}, 171:2039-2087, 2010.

\bibitem{Isomorphism}A.-S. Sznitman. An isomorphism theorem for random
interlacements. \textit{Electron. Commun. Probab.}, 17(9):1-9, 2012.

\bibitem{decoupling} A.-S. Sznitman. Decoupling inequalities and
interlacement percolation on \textit{G} \texttimes{} \textit{Z}. \textit{Invent.
Math.}, 187(3):645-706, 2012.

\bibitem{dscn-Szn}A.-S. Sznitman. Disconnection and level-set percolation
for the Gaussian free field. \textit{Preprint}, also available at:
arXiv:1407.0269.

\bibitem{dscn2-Szn}A.-S. Sznitman. Disconnection, random walks, and
random interlacements. \textit{Preprint}, also available on arXiv.

\bibitem{TeiWin}A. Teixeira, and D. Windish. On the fragmentation
of a torus by random walk, \textit{Comm. Pure Appl. Math.}, 64(12):1599-1646,
2011.

\end{thebibliography}
\end{document}